\documentclass[11pt,reqno,tbtags]{amsart}
\usepackage{todonotes}

\usepackage{longtable,array}
\usepackage{booktabs,makecell}

\usepackage{amsthm,amssymb,amsfonts,amsmath}
\usepackage{amscd} %Commutative diagrams. 
\usepackage[numbers, sort&compress]{natbib} 
\usepackage{subfigure, graphicx}
\usepackage{vmargin}
\usepackage{setspace}

\usepackage{paralist}
\usepackage{comment}
\usepackage[pagebackref=true]{hyperref}
\usepackage[dvipsnames]{xcolor}
\usepackage[normalem]{ulem}
\usepackage{stmaryrd} %for \bbr
\usepackage[refpage,noprefix,intoc]{nomencl} %for list of notation.
\usepackage{mathtools}
\usepackage{bbm}
\usepackage{mathrsfs}

\usepackage{enumitem}
\newlist{steps}{enumerate}{1}
\setlist[steps, 1]{label = Step \arabic*.}

\usepackage[capitalise]{cleveref}

\theoremstyle{plain}

\crefname{theorem}{Theorem}{Theorems}
\newtheorem{theorem}{Theorem}

\newtheorem{lemma}[theorem]{Lemma}

\crefname{proposition}{Proposition}{Propositions}
\newtheorem{proposition}[theorem]{Proposition}

\crefname{observation}{Observation}{Observations}

\crefname{corollary}{Corollary}{Corollaries}
\newtheorem{corollary}[theorem]{Corollary}

\crefname{condition}{Condition}{Conditions}
\newtheorem{condition}[theorem]{Condition}

\theoremstyle{definition}

\crefname{definition}{Definition}{Definitions}

\crefname{example}{Example}{Examples}

\crefname{claim}{Claim}{Claims}

\theoremstyle{remark}

\crefname{remark}{Remark}{Remarks}
\newtheorem{remark}[theorem]{Remark}

\numberwithin{equation}{section}
\numberwithin{theorem}{section}

\newcommand{\N}{\mathbb{N}}
\newcommand{\R}{\mathbb{R}}

\newcommand{\cone}{\mu_{\rp}}
\newcommand{\ctwo}{\zeta}
\newcommand{\tr}{M}

\newcommand{\edg}{E}
\newcommand{\vx}{\mathrm{V}}

\newcommand{\dd}{\boldsymbol{d}}

\newcommand{\cA}{\mathscr{A}}

\newcommand{\cS}{\mathscr{S}}
\newcommand{\cG}{\mathscr{G}}
\newcommand{\cC}{\mathscr{C}}

\newcommand{\e}{\mathrm{e}}

\newcommand{\betaveps}{\beta(\rq^\veps)}
\newcommand{\cmax}[1]{{#1}_{\sss(1)}}

\newcommand{\veps}{\varepsilon}

\newcommand{\dg}{\boldsymbol{d}} % main degree sequence

\newcommand{\mm}{{\boldsymbol{m}}}
\renewcommand{\mp}{{\boldsymbol{m}'}}

\newcommand{\nn}{{\boldsymbol{n}}} 
\newcommand{\np}{{\boldsymbol{m}}} % perturbed version of main degree sequence

\newcommand{\nps}{{\mathcal{M}^\veps}} % set of perturbed degree seqs

\newcommand{\nbig}{{\boldsymbol{N}}} % larger degree sequence

\newcommand{\cGc}{\cG^{\mathrm{conn}}} % set of connected graphs

\newcommand{\cCc}{{\cC}^{\mathrm{conn}}} % set of connected configurations

\newcommand{\cSc}{{\cS}^{\mathrm{conn}}} % set of connected simple configurations
\newcommand{\rp}{\boldsymbol{p}}
\renewcommand{\rq}{\boldsymbol{q}}

\newcommand{\ext}{\beta}

\newcommand{\cP}{\mathcal{P}}

\newcommand{\eqn}[1]{\begin{equation}
#1\end{equation}}

\newcommand{\eqan}[1]{\begin{align}
#1\end{align}}

\newcommand{\sss}{\scriptscriptstyle}

\newcommand {\convp}{\stackrel{\sss {\mathbb P}}{\longrightarrow}}

\newcommand{\prob}{{\mathbb P}}

\newcommand\1{\mathbbm{1}}
\newcommand{\indic}[1]{\1_{\{#1\}}}

\newcommand{\vertex}{o}

\usepackage{graphicx}

\title[Connected graphs with given degrees]{The number and structure of connected graphs\\
with a fixed degree sequence}
\author{Sasha Bell} 
\address{McGill University}
\email{sasha.bell@mail.mcgill.ca}

\author{Serte Donderwinkel} 
\address{University of Groningen}
\email{s.a.donderwinkel@rug.nl}

\author{Remco van der Hofstad}
\address{Eindhoven University of Technology}
\email{r.w.v.d.hofstad@tue.nl}

\begin{document}

\begin{abstract} We study connected graphs with a fixed degree sequence, in the sparse setting where the number of edges grows linearly in the number of vertices. Using the relation to the configuration model, we identify the number of such connected graphs up to the exponential order. We do this by viewing a connected graph with a given degree distribution as the realization of the {\em giant component} in a larger configuration model, and carefully choosing the degree distribution of the larger graph so that it is likely that its giant component has the required degree distribution. To ensure that the connected graph has {\em exactly} the correct degrees, we use a {\em switching argument}. Additionally, we obtain results on rare event probabilities and describe the local structure of a uniform connected graph with a fixed degree sequence.
\end{abstract}
\date{\today}
\subjclass[2020]{05C30; 05C40; 60C05; 05C80}
%enumeration in graph theory; connectivity; combinatorial probability; random graphs

\keywords{asymptotic enumeration; connected graph; prescribed degree sequence; configuration model; large deviations; 
local convergence;
Benjamini--Schramm topology}
\maketitle

\section{Introduction and results}
\label{sec-intro-res}
Counting graph with various properties has long been a  fascination in combinatorics. Simple examples include the number $2^{n(n-1)/2}$ of simple graphs on $n$ vertices, and the number ${{n(n-1)/2}\choose{m}}$ of all simple graphs on $n$ vertices having $m$ edges. For more elaborate examples, often one needs to resort to {\em asymptotic} counting for large graphs. Examples include the number of {\em connected} graphs with a fixed number $m$ of edges in various edge-scaling regimes \cite{BenCanMck90, Boll84a,Jans07, Lucz90b, PitWor05,Reny59, Spen97, Wrig77,Wrig80},
and the number of graphs with a prescribed degree sequence \cite{BenCan78, Boll80b, GaoWor16, Jans06b, Jans13a, LieWor24, Lucz92, Mcka81, Mcka11, MckWor90, MckWor91, Worm99, Worm18}, including their fast uniform generation \cite{ArmGaoWor21, GaoWor17, GaoWor18}, counting digraphs and bipartitite graphs \cite{LieWor23}, as well as hypergraphs \cite{KamLieWor22}.
\smallskip

In this paper, we combine these two threads, and count the asymptotic number of connected graphs with prescribed degrees. Previous work on this topic applies to settings where there are few vertices of degree 1, in which case a positive proportion of graphs are connected \cite{AddCru25, FedHof17, Worm81b, AddReeYua26}. The presence of a linear proportion of degree-1 vertices leads to an {\em exponential correction} for the number of connected graphs compared to the number of graphs with such degree sequences.

\subsection{Model and results}
\label{sec-model-results}
In this section, we define the model and state our main results.
\smallskip

\paragraph{\bf Model and assumptions.}
A {\em degree sequence} is a vector $\dd=(d_1,\ldots,d_n)$ of non-negative integers whose sum is even. For $n\ge 1$, let $\dd=\dd(n) = (d_1(n),\ldots, d_n(n))$ be a degree sequence, and for $i\ge 1$, define $
n_i =n_i(n) = \#\{j\le n\colon d_j(n) = i\}.$ Without loss of generality, we may and will assume that $n_0=0$, since the existence of isolated vertices implies that the graph is disconnected. 

We write $\ell=\ell_n=\sum_{i\ge 1} i n_i$ for the total degree, which equals twice the number of edges of a graph with degree sequence $\dd$. 
We consider the following assumptions on the degree distribution:

\begin{condition}[Convergence of degree distribution]
\label{cond-convergence-d}

\begin{enumerate}
    \item[{\bf (a)}] There is a $\rp=(p_i)_{i\ge 1}$ such that $n_i/n\to p_i$ for all $i$, where $\sum_{i\ge 1} p_i=1;$
    \item[{\bf (b)}]\label{first moment} $\ell_n/n \to \sum_{i\ge 1}ip_i<\infty$ as $n\rightarrow \infty$.
    \end{enumerate}
    In intermediate arguments, we sometimes consider the following additional condition:
    \begin{enumerate}
    \item[{\bf (c)}]\label{second moment} $\sum_{i\geq 1}i^2 n_i/n \to \sum_{i\ge 1}i^2p_i<\infty$ as $n\rightarrow \infty$.
\end{enumerate}
\end{condition}
Condition \ref{cond-convergence-d} is quite standard in the field; see e.g.\ the equivalent \cite[Condition 7.8]{Remco1}. 
When Conditions \ref{cond-convergence-d}(a)--(b) hold, we let $\mu_{\rp} \coloneqq \sum_{i\ge 1}ip_i.$

\begin{condition}[Conditions on the limiting degree distribution]
\label{cond-p}
The limiting degree distribution $\rp=(p_i)_{i\ge 1}$ satisfies the following conditions:
\begin{enumerate}
    \item[{\bf (a)}] $p_1 > 0$;
    \item[{\bf (b)}] $ \sum_{i\ge 1}ip_i  > 2$.
\end{enumerate}
\end{condition}
In Remark \ref{rem-cond-p} below, we discuss Condition \ref{cond-p} in more detail.
\medskip

\paragraph{\bf The number of connected graphs with prescribed degrees.}
Let $\cG_{\dg}$ be the set of simple graphs with vertex set $[n]=\{1,\ldots,n\}$ such that, for $i\in[n]$, the degree of vertex $i$ is $d_i$. 
The main aim of this paper is to study the {\em number of graphs} in $\cG_{\dg}$ that are connected, denoted by $\cGc_{\dg}$, as well as their {\em rare events} and {\em local structure}, when $n$ is large.
We now give the required definitions to state our main result.

Let $\ext=\ext(\rp)  \in (0,1)$ be the unique solution of the equation 
\eqn{
\label{beta-implicit-function}
\sum_{k=1}^\infty kp_k = (1-\ext^2) \sum_{k=1}^\infty \frac{kp_k}{1-\ext^k}.
}
The existence and uniqueness of $\ext$ is verified in \cite[Remark 2.4]{Bhamidi22} (see also Proposition \ref{prop:K_cont} below). 
Let 
\eqn{
\label{K-def}
K(\rp) = \left( \frac{1}{2}\sum_{k=1}^\infty kp_k\right)\log(1-\ext(\rp)^2) - \sum_{k=1}^\infty p_k \log(1-\ext(\rp)^k).
}
$K(\rp)$ appeared already in \cite{Bhamidi22}, but the interpretation of $\beta(\rp)$ as an extinction probability that we derive in this paper (see Lemma \ref{lem-beta}) is novel. In terms of these quantities, our main result identifies the number of connected graphs with degree sequence $\dd$:

\begin{theorem}[Asymptotic number of connected graphs with prescribed degrees]
\label{thm:main}
    Under Conditions \ref{cond-convergence-d}(a)--(b) and \ref{cond-p}, as $n\rightarrow \infty$, 
    \[|\cGc_{\dg}|=\e^{-K(\rp)n+o(n)}|\cG_{\dg}|=\e^{-K(\rp)n+o(n)}\frac{\left(\ell_n -1\right)!!}{\prod_{i=1}^n d_i!} .\]
\end{theorem}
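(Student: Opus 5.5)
We follow the strategy announced in the abstract. The argument runs through the configuration model, with the second-moment Condition \ref{cond-convergence-d}(c) assumed at first and removed at the end by truncation. Let $\cC_\dd$ be the set of configurations (perfect matchings of the $\ell_n$ half-edges) and $\cCc_\dd$ the set of connected ones. Under (c) the probability of simplicity is $\e^{-O(1)}$. Since every simple graph corresponds to exactly $\prod_i d_i!$ configurations, the problem reduces to showing $\prob(\mathrm{CM}_n(\dd)\text{ connected and simple})=\e^{-K(\rp)n+o(n)}$. Simplicity should cost only a subexponential factor even after conditioning on connectivity. To see this, we would use a switching argument that removes the $O(1)$ loops and multi-edges; alternatively, it falls out of the construction below, because the giant of a larger simple CM is simple. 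The second equality in Theorem \ref{thm:main} is the standard count $|\cG_\dd|=\e^{o(n)}(\ell_n-1)!!/\prod_i d_i!$, which is valid under (a)--(b) because the simplicity probability is $\e^{-o(n)}$ there as well, via truncation.

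\textbf{Embedding in a larger graph.} Fix $\beta=\beta(\rp)$. Choose a larger degree sequence $\nbig$ whose limiting law $\rq$ satisfies $q_k\propto p_k/(1-\beta^k)$ for $k\ge1$, together with a suitable mass of extra vertices, for instance degree-1 vertices. The mass is tuned so that the extinction probability of the edge-exploration branching process of $\rq$ equals $\beta$; equation \eqref{beta-implicit-function} is exactly the consistency condition that makes this possible. By the known giant-component results, the giant of $\mathrm{CM}(\nbig)$ then has degree profile $n_k/n\to p_k$ and size $n(1+o(1))$. Conditioned on its vertex set and degrees, the giant is a uniform connected configuration.

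\textbf{Counting.} Decompose $|\cC_{\nbig}|$ according to the vertex set and degree sequence $\mm$ of the giant. Each term factors as (choice of vertices) $\times\,|\cCc_\mm|\,\times$ (matchings of the complement with no giant) $\times$ (ways to split half-edges). The dominant terms have $\mm$ close to $\dd$. The complement is subcritical, so it contributes its full count up to $\e^{o(n)}$. Matching exponential rates gives $|\cCc_\mm|=\e^{-K(\rp)n+o(n)}|\cC_\mm|$. The algebra should reproduce $K$: the term $\tfrac12\mu_{\rp}\log(1-\beta^2)$ accounts for the edges, and $-\sum_k p_k\log(1-\beta^k)$ for the vertices. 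This computation is a routine, if lengthy, Stirling calculation. For the upper bound, apply the identity to every $\mm$ in a neighbourhood $\nps$ of $\dd$. For the lower bound, use a second-moment or LDP bound showing that the giant lands in $\nps$ with probability $\e^{-o(n)}$.

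\textbf{Main obstacle: exact degrees.} The construction only yields the count for \emph{some} $\mm$ within $\veps n$ of $\dd$, and we need it for exactly $\dd$. We would relate $|\cCc_\mm|$ and $|\cCc_\dd|$ for $\|\mm-\dd\|_1\le\veps n$ by a switching argument. Each elementary move changes the degrees of a few vertices by $\pm1$: we cut an edge on a cycle, or reattach a degree-1 leaf, so that connectivity is preserved. Double counting shows that each move changes the number of connected configurations by a factor at most $\mathrm{poly}(n)$, and more finely by a factor $\e^{O(1)}$ when typical cycle structure is available. This gives $|\cCc_\dd|=\e^{O(\veps n\log(1/\veps))+o(n)}|\cCc_\mm|$. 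The moves must be chosen carefully so that the forward and backward counts are comparable. This is where $p_1>0$ and $\mu_{\rp}>2$ (the existence of many cycles) are used. Finally, letting $\veps\downarrow0$ requires continuity of $K$ in $\rp$, which is Proposition \ref{prop:K_cont}. The same continuity removes Condition \ref{cond-convergence-d}(c): we truncate high degrees, which changes $\rp$ slightly, and compare the two counts by another switching.
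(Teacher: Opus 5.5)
Your outline follows the paper: embed in $\mathrm{CM}(\nbig)$ with $q_k\propto p_k/(1-\beta^k)$, decompose along the type of the giant, get $K(\rp)$ from the Stirling computation of Proposition~\ref{prop:Kp}, fix the degrees exactly, then truncate and use continuity of $K$. Two small corrections. By Lemma~\ref{lem-beta}, \eqref{beta-implicit-function} already makes $\beta$ the extinction probability, so no extra degree-one mass needs tuning. And once the giant's vertex set is chosen, there is no separate factor for splitting half-edges.

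\textbf{The gap is the exact-degree step.} Your conclusion $|\cCc_\dd|=\e^{O(\veps n\log(1/\veps))+o(n)}|\cCc_\mm|$ is false when $\ell_\dd-\ell_\mm=2s$ with $s\asymp\veps n$. Already $|\cC_\dd|/|\cC_\mm|=\prod_{t=1}^{s}(\ell_\mm+2t-1)=\e^{\Theta(\veps n\log n)}$. What you need is a comparison of the fractions $|\cCc_\dd|/|\cC_\dd|$ and $|\cCc_\mm|/|\cC_\mm|$.

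More seriously, double counting needs forward move counts bounded below and backward counts bounded above uniformly over \emph{all} configurations in the class. Your local moves have no such bound: cutting a cycle edge between prescribed degrees, relocating a leaf, or smoothing a vertex can all have zero available instances in some connected configuration. The typical cycle structure of a uniform element of $\cCc_\mm$ is not available at this stage; it is essentially what is being proved. If your neighbourhood allows $\mm\not\le\dd$, you must also delete vertices while keeping connectivity, which is the hard direction. Preservation of simplicity is not addressed either.

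\textbf{The paper's idea is never to change the degrees.} Take $\nps=\{\mm\le\nn\colon \ell_\mm>(1-\veps)\ell_\nn\}$. Corollary~\ref{cor:decomp_giant} and Lemma~\ref{lem:compare_n_np} turn $\sum_{\mm\in\nps}|\cCc_\mm|/|\cC_\mm|$ into $\e^{\delta(\veps)n}|\cC_\nn^{\nps}|/|\cC_\nn|$. This is the fraction of configurations of the exact type $\nn$ whose giant has type in $\nps$, with the missing vertices in at most $\veps n/2$ small components. Proposition~\ref{prop:almost_connected} merges these into the giant by degree-preserving $i$-switchings with deterministic bounds. There are at most $\binom{\cone n}{2i}2^{2i}(2i)!$ switchings out of any connected configuration. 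There are at least $\binom{\ctwo n}{i}(2i)!$ into any target, using only the surplus $\ctwo n$ guaranteed by $\mu_\rp>2$. This switching preserves simplicity and is insensitive to large degrees. That is also why the lower bound in Section~\ref{sec-proof-completion} works with $\nn^\veps\in\nps$.

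\textbf{Your route can also be closed, but not with the moves you sketch.}
\begin{enumerate}
\item The upper bound for $\nn$ itself follows from the single term $\mm=\nn$ of the decomposition, taking $\nbig\ge\nn$: each configuration of type $\nbig$ has at most $N/n$ components of size $n$. So switching is needed only for the lower bound.
\item For $\mm\le\nn$ and $s=(\ell_\nn-\ell_\mm)/2$, cut $s$ non-tree edges of $C\in\cCc_\mm$. Match the $2s$ freed half-edges bijectively to the half-edges of the new vertices, which keeps the configuration connected.
\item This gives $|\cCc_\nn|\ge|\cCc_\mm|\binom{\ctwo n}{s}(2s)!/(2s-1)!!$. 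Against $|\cC_\nn|/|\cC_\mm|\le\ell_\nn^{s}$, the loss is only $\e^{-O(s)}$.
\item Combine this with $|\nps|=\e^{o(n)}$. This is for configurations; the simple case needs the matching chosen to avoid multi-edges.
\end{enumerate}
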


Theorem \ref{thm:main} identifies the number of connected graphs with degrees $\dd$ up to sub-exponential factors. 

\begin{remark}[Relation to Condition \ref{cond-p}]
\label{rem-cond-p}
Under Condition \ref{cond-p}, it is not hard to see that the exponential rate $K(\rp)$ is strictly positive. When $p_1=0$,  this is not the case, and, in fact, the number of connected graphs has been identified up to the leading constant in \cite{FedHof17} assuming Conditions \ref{cond-convergence-d}(a)-(c). 
\end{remark}

In brief, the key idea of the proof is to {\em embed} the connected graph as the giant component in a uniform random graph on more vertices, with the appropriate degree distribution. This degree distribution is  chosen precisely such that the degree distribution in the giant is close to $\dd$. We elaborate in more detail on the key ideas in the proof in Section \ref{sec-overview}.
\medskip

\paragraph{\bf Properties of a uniform connected graph with prescribed degrees.}
Uniform random graphs with certain properties, such as degree distributions, are frequently used in network science, since if one knows these properties of the real-world network, the uniform model is the most unbiased, and thus serves as a useful benchmark. In case it is natural to consider {\em connected} graphs, the uniform distribution on connected random graphs with a prescribed degree distribution is the most natural model. 

Using our strategy, we are also able to study properties of a uniform connected graph with prescribed degrees by exploiting known properties of the giant of the larger graph. We obtain a theorem on {\em rare events} for connected graphs; see Theorem~\ref{thm:exp-concentration-from-CM} below, which is useful to prove properties of {\em most} connected graphs with prescribed degrees.
\smallskip

We continue by establishing the {\em local structure} of a uniform connected graph. We start by introducing {\em rooted graphs,} and {\em neighbourhoods} in graphs.
\smallskip

A {\em rooted graph} is a pair $(G,\vertex)$, where $G=(V(G),E(G))$ is a graph with vertex set $V(G)$ and edge set $E(G)$, and $\vertex\in V(G)$ is a vertex. 
Further, a rooted or non-rooted graph is called {\em locally finite} when each of its vertices has finite degree (though not necessarily uniformly bounded). 
Two rooted (finite or infinite) graphs $(G_1, \vertex_1)$ and $(G_2,\vertex_2)$, with $G_i=(V(G_i),E(G_i))$ for $i\in\{1,2\}$, are called {\em isomorphic}, denoted by $(G_1, \vertex_1)\simeq (G_2,\vertex_2)$, when there exists a bijection $\phi\colon V(G_1)\mapsto V(G_2)$ such that $\phi(\vertex_1)=\vertex_2$ and $\{u,v\}\in E(G_1)$ precisely when $\{\phi(u),\phi(v)\}\in E(G_2).$ 
\smallskip

For a graph $G$ and $u,v\in V(G)$, we let $d_{\sss G}(u,v)$ denote the graph distance between $u$ and $v$, where, by convention, $d_{\sss G}(u,v)=\infty$ when $u$ and $v$ are in different components of $G$. For a rooted graph $(G,\vertex)$, we let $B_r^{\sss(G)}(\vertex)$ denote the induced subgraph of $(G,\vertex)$ on all vertices $u$ with $d_{\sss G}(\vertex,u)\leq r$, rooted at $\vertex$. 

Finally, a {\em unimodular branching process} with root offspring distribution $\rq$ (see, e.g.\ \cite{AldLyo07}) is a branching process where the root offspring distribution is $\rq$, while the offspring distribution of all other individuals is equal to $\rq^\star=(q_k^\star)_{k\geq 0}$ given by
    \eqn{
    \label{q-star-def}q_k^\star=\frac{(k+1)q_{k+1}}{\sum_{i\geq 1} iq_i}.
    }
The following theorem describes the structure of the neighbourhoods in a graph chosen uniformly at random from $\cGc_{\dg}$:

\begin{theorem}[Neighbourhoods of uniform connected graph with prescribed degrees]
\label{thm:local-neighbourhoods}
    Assume Conditions \ref{cond-convergence-d}(a)--(b) and \ref{cond-p}. Let $G_n$ be a graph chosen uniformly at random from $\cGc_{\dg}$.
    Then, for every finite rooted tree ${\bf t}$,
    \eqn{
	\label{neighbourhood-convergence}
	\frac{1}{n} \sum_{v\in [n]} \indic{B_r^{\sss(G_n)}(v)\simeq {\bf t}} \convp \mu({\bf t}),
	}
    where $\mu({\bf t})$ is the probability that the subgraph on the first $r$ generations of a unimodular branching process with root offspring $\rq$, conditioned on survival, is isomorphic to ${\bf t}$, where the root offspring $\rq=(q_i)_{i\geq 1}$ is given by
    \eqn{
    \label{q-choice}
    q_k=\frac{p_k}{1-\beta^{k}}\cdot \left(\sum_{i\geq 1}\frac{p_i}{1-\beta^{i}}\right)^{-1},
    }
and $\beta=\beta(\rp)$ is given in \eqref{beta-implicit-function}.
\end{theorem}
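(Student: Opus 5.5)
The plan is to reduce the statement to local convergence of the giant component in a larger configuration model, transferred through the rare-event theorem (Theorem~\ref{thm:exp-concentration-from-CM}). The embedding runs as follows. Choose the degree distribution $\rq$ as in \eqref{q-choice}, and for a small $\veps>0$ consider a uniform simple graph (equivalently a configuration model conditioned on simplicity) on $N\approx n\sum_i p_i/(1-\beta^i)$ vertices, whose degree sequence $\nbig$ has empirical distribution close to $\rq$. Here $\beta$ is the extinction probability of the size-biased branching process with offspring law $\rq^\star$ (Lemma~\ref{lem-beta}). The giant component then has degree counts close to $N q_k(1-\beta^k)\approx n p_k$. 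Conditionally on its vertex set and degree sequence $\mm$, the giant is a uniform connected simple graph with degrees $\mm$. Degree sequences $\mm$ that are $\veps$-close to $\dd$ are related to $\dd$ through the switching argument. This is exactly the setting in which Theorem~\ref{thm:exp-concentration-from-CM} applies: an event has vanishing probability under the uniform law on $\cGc_{\dg}$ as soon as its analogue for the giant of $\mathrm{CM}_N(\nbig)$ has probability $\e^{-\delta n}$ for some $\delta>0$, uniformly over the perturbed sequences.

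Step 1 is local convergence of the giant. Fix $r$ and a finite rooted tree ${\bf t}$, and set $Z_N({\bf t})=\sum_{v\in\cmax{\mathscr{C}}}\indic{B_r(v)\simeq{\bf t}}$. By standard results on the configuration model, the neighbourhood of a uniform vertex converges to the unimodular branching process with root law $\rq$. Moreover, the event that $v$ lies in the giant is asymptotically the event that the local exploration survives. Hence $Z_N({\bf t})/N\to \prob(\text{BP}_r\simeq{\bf t},\ \text{survival})$. Dividing by $|\cmax{\mathscr{C}}|/N\to \prob(\text{survival})=\sum_k q_k(1-\beta^k)$ turns this into the conditional-on-survival probability $\mu({\bf t})$. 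Condition~\ref{cond-convergence-d}(c) may fail for $\dd$, but the truncation or perturbation already used in the paper should control high degrees. These vertices affect only $o(n)$ neighbourhoods, because $\ell_n/n$ converges, so a vanishing fraction of half-edges belongs to vertices of degree above any large fixed $M$.

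Step 2, the main obstacle, is to upgrade convergence in probability to an exponential concentration bound, which Theorem~\ref{thm:exp-concentration-from-CM} requires. I would first work with the quantity $\sum_{v}\indic{B_r(v)\simeq{\bf t},\, v\leftrightarrow \text{a vertex at distance}>R \text{ or a large component}}$, approximating membership in the giant by local survival up to depth $R$. This approximation is itself controlled with exponentially small error by the known large-deviation estimates for the giant's size and degree counts, namely those used in the proof of Theorem~\ref{thm:main}. The local counts are functions of the uniform pairing that change by at most a constant (depending on $r$, $R$ and a degree cap $M$) under a single switch of two pairs. A bounded-differences martingale inequality for the configuration model then gives deviations of order $\veps N$ with probability $\e^{-cN}$. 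Conditioning on simplicity costs only a subexponential factor. Unbounded degrees are handled by first capping at $M$: the number of half-edges at high-degree vertices is deterministic and small, so it perturbs the counts by at most $o_M(1)n$.

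Step 3 is the transfer. Apply Theorem~\ref{thm:exp-concentration-from-CM} to the event $\{|n^{-1}\sum_v\indic{B_r(v)\simeq{\bf t}}-\mu({\bf t})|>\eta\}$. The switches that convert the $\veps$-perturbed degree sequence $\mm$ into $\dd$ alter only $O(\veps n)$ vertices' neighbourhoods, each involving bounded-degree regions after capping. Letting $\veps\to0$ then gives \eqref{neighbourhood-convergence}. Non-tree neighbourhoods need no separate treatment, since the statement concerns only trees and the limit is a tree.
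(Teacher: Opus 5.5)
Your architecture matches the paper's: realise $G_n$ as the giant of the configuration model on $\nbig$, use exponential concentration of neighbourhood counts in that giant, and transfer via Theorem~\ref{thm:exp-concentration-from-CM}. The difference is that the paper takes the bound \eqref{concentration-neighbourhoods-in-giant} directly from \cite[Theorem 25]{BolRio15}, while your Step~2 tries to derive it. That derivation has a gap, at exactly the point you flag as the main obstacle.

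Let $A_R(v)$ be the event that some vertex lies at distance at least $R$ from $v$, and put $X=\sum_v\indic{B_r(v)\simeq{\bf t},\,A_R(v)}$ and $S=\sum_v\indic{A_R(v)}$. Degrees in $\nbig$ are already bounded by the truncation level $\tr$, so your capping is unnecessary. Hence $X$ and $S$ change by $O(\tr^{\max(r,R)})$ under a switch, and they do concentrate exponentially. Once $|\cmax{C}|>\tr^{R+1}$, every vertex of $\cmax{C}$ satisfies $A_R(v)$, so
\[
X-\bigl(S-|\cmax{C}|\bigr)\le N_n'({\bf t})\le X .
\]
The lower bound gives what you need only if the giant has an exponentially small lower tail, $\prob\bigl(|\cmax{C}|<(\xi-\eta)N\bigr)\le \e^{-cN}$, with $\xi N$ its typical size.

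You attribute this to the estimates ``used in the proof of Theorem~\ref{thm:main}''. That proof uses only the law of large numbers for the giant from \cite[Theorem 4.9]{Remco2}, entering as $(1+o(1))$ statements in Proposition~\ref{prop:typical_giant} and Corollary~\ref{cor:no_giant}. Nothing exponential about the giant is proved or used there. Bounded differences cannot supply it either, since $|\cmax{C}|$ is not Lipschitz: a single switch can cut the giant into two pieces of linear size. The needed statement is that, outside an event of probability $\e^{-cN}$, all but $\eta N$ of the locally surviving vertices lie in one component. This is global, and it is precisely what Bollob\'as and Riordan prove. So Step~2 closes only by citing \cite{BolRio15}, and at that point you may as well use their Theorem~25 directly, as the paper does.

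The switching discussion in Step~3 is not needed. Theorem~\ref{thm:exp-concentration-from-CM} already compares $\cGc_\nn(P)$ with $\cC_\nbig(P)$ for the exact type $\nn$; the mismatch between $\mm\in\nps$ and $\nn$ is handled inside Theorem~\ref{thm:main}. As phrased, Step~3 is also inaccurate:
\begin{itemize}
\item the switchings of Proposition~\ref{prop:almost_connected} act inside $\cC_\nn$, reattaching small components to the near-spanning one, rather than converting $\mm$ into $\dd$;
\item when $\dd$ has unbounded degrees, rewiring $O(\veps n)$ edges can change far more than $O(\veps n)$ of the $r$-neighbourhoods.
\end{itemize}
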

\smallskip

We will show that the root offspring of the conditioned tree indeed equals $\rp$, as expected, since this is the approximate degree distribution in $\cGc_{\dg}$. Note that since the distribution ${\bf t}\mapsto \mu({\bf t})$ has mass 1, and $r$ can be chosen arbitrarily large, Theorem \ref{thm:local-neighbourhoods} implies that the proportion of vertices in a cycle of bounded length vanishes as $n\rightarrow \infty.$
\smallskip

The proof of Theorem \ref{thm:local-neighbourhoods} is deferred to Section \ref{sec-proof-local-limit}, where we also describe that Theorem \ref{thm:local-neighbourhoods} implies {\em local convergence} of a graph chosen uniformly at random from $\cGc_{\dg}$. We again elaborate on the key ideas in the proof in Section \ref{sec-overview}.

\subsection{Overview of the proof}
\label{sec-overview}
We call the sequence $\nn=(n_i)_{i\ge 1}$ the \textit{type sequence} of $\dd$. 
It will be convenient for us to work with type sequences, 
rather than degree sequences.  
For $n\ge 1$, define a degree sequence $\dg'$ with type $\nn$ by setting 
\[
d_i' = \min \Big\{ j\colon  \sum_{k=1}^j n_k \ge i \Big\}, \qquad i\in[n].
\]
Let $\cG_\nn \coloneqq \cG_{\dg'}.$
Observe that the elements of $\cG_{\dg}$ (resp.\ $\cGc_{\dg}$) are in bijection with the elements of $\cG_{\nn}$ (resp. $\cGc_{\nn}$) by relabelling the vertices, respecting the order of vertices with the same degree. It follows that $|\cG_\nn| = |\cG_{\dg}|$ and $|\cGc_\nn| = |\cGc_{\dg}|$, so to prove Theorem \ref{thm:main}, it suffices to show that 
\begin{equation}\label{thm:nn}
|\cGc_{\nn}|=\e^{-K(\rp)n+o(n)}|\cG_{\nn}|=\e^{-K(\rp)n+o(n)}\frac{\left(\ell_n -1\right)!!}{\prod_{k\ge 1}(k!)^{n_k}}.
\end{equation}

 We write $\ell_\nn=\sum_{i\ge 1}in_i$ for twice the number of edges in a graph with type $\nn$. For sequences $\mm,\nn\in \R^\N$ we say $\mm\le \nn$ if $m_i\le n_i$ for every $i\in \N$. We let $n\coloneqq \sum_{i\ge 1}n_i$ and $m\coloneqq \sum_{i\ge 1}m_i.$ 
For $\mm,\nn \in \N_0^\N$ with $\mm\le \nn$ we write 
\[\binom{\nn}{\mm}=\prod_{i\ge 1}\binom{n_i}{m_i}. \]

\paragraph{\bf Connected graphs with type sequence $\nn$ as giants in a larger graph}
The main idea behind the proof is that if we could pick $\nbig$ so that most graphs with type sequence $\nbig$ typically have one component with type sequence exactly $\nn$, then 
    \[
    |\cG_\nbig|\approx |\cGc_\nn|\cdot |\cG_{\nbig-\nn}|\binom{\nbig}{\nn},
    \]
where the first factor comes from the graph structure of the component with type sequence $\nn$, the second from the graph structure of the remaining graph, and the third from the choice of vertex labels in the giant.
In turn, this would imply that
\eqn{\label{approx_giant}\frac{|\cGc_\nn|}{|\cG_\nn|}\approx\frac{|\cG_\nbig|}{|\cG_\nn|\cdot |\cG_{\nbig-\nn}|\binom{\nbig}{\nn}}.}
Our proof relies on the fact that, conditionally on the configuration model being simple, the graph realization is a uniform graph with the prescribed degrees (see \cite{Boll80b}, or \cite[Chapter 7]{Remco1} and Section \ref{sec-proof-configurations} for an introduction to the configuration model). Therefore, we can switch to counting {\em configurations} rather than {\em graphs}, provided we have control on the probability that the configuration model is simple. Let $\cC_\nn$ denote the number of configurations with type sequence $\nn$, so that 
    \[
    |\cC_\nn|=(\ell_\nn-1)!!.
    \]

If the configuration model on $\nbig$, $\nn$ and $\nbig-\nn$ all give a simple graph with probability bounded away from $0$, and using that $|\cG_\nn|\approx a |\cC_\nn|,$ with $a$ the asymptotic probability that $\cC_\nn$ is simple (and similarly for $\nbig$ and $\nbig-\nn$) then
    \eqn{
    \frac{|\cGc_\nn|}{|\cG_\nn|}\approx c \frac{|\cC_\nbig|}{|\cC_\nn|\cdot |\cC_{\nbig-\nn}|\binom{\nbig}{\nn}},
    }
where, in the middle term, $c$ denotes the asymptotic probability for $\cC_\nbig$ to be simple, divided by the products of the asymptotic probabilities for $\cC_\nn$ and $\cC_{\nbig-\nn}$ to be simple. 
It then suffices to show that 
\eqn{
\label{aim-counting}
 \frac{|\cC_\nbig|}{|\cC_\nn|\cdot |\cC_{\nbig-\nn}|\binom{\nbig}{\nn}} = \e^{-K(\rp)n+o(n)}.
}
We next explain how to choose the type sequence $\nbig$, after which we show how \eqref{aim-counting} follows from this choice.

\medskip

\paragraph{\bf Choosing the type sequence of the larger graph}

The above strategy only works if the type sequence of the giant of a uniform configuration in $\cC_\nbig$ is quite close to $\nn\approx n\rp .$
For this, we have to choose the type sequence $\nbig$ in a rather special way. We next explain how, as this is closely related to the exponential functional 
$K(\rp)$ in (\ref{K-def}), as well as the fixed-point equation for $\beta(\rp)$ in (\ref{beta-implicit-function}).
Assume that $\nbig$ satisfies Conditions \ref{cond-convergence-d}(a)--(b), with some limiting degree distribution $\rq=(q_i)_{i\geq 1}.$ The key question is then how $\rq$, as well as $N=\sum_{i\geq 1}N_i$, should be chosen such that the giant in a uniform configuration in $\cC_\nbig$ is quite close to $\nn$.
We rely on \cite[Theorems 2.31, 4.1 and 4.9]{Remco2} to see that, with $v_k\left(\cmax{C}\right)$ denoting the number of vertices in the giant component $\cmax{C}$, 
and denoting $N=\gamma n$ for some $\gamma\geq 1$,
    \eqn{
    \label{vertices-edges-giant}
    \frac{v_k\left(\cmax{C}\right)}{\gamma n}\convp q_k(1-\beta^k),
    \qquad
    \frac{\left|\edg\left(\cmax{C}\right)\right|}{\gamma n}
    \convp \frac{1}{2} \sum_{k\geq 1}kq_k(1-\beta^2),
    }
where $\beta$ is the extinction probability of the unimodular branching process with root offspring distribution $\rq.$ 
\smallskip 

In order for the giant to have the approximately correct degree statistics and edge count, we thus require that
    \eqn{
    \label{qk-pk-choice}
    p_k
    =\gamma q_k(1-\beta^k),
    \qquad
    \text{and}
    \qquad
    \frac{1}{2} \sum_{k\geq 1}k p_k
    =\frac{\gamma}{2} \sum_{k\geq 1}kq_k(1-\beta^2),
    }
    where the normalizing constant $\gamma$ is chosen such that
    \eqn{
    1=\sum_{i\geq 1}q_i
    =\gamma^{-1}\sum_{i\geq 1}\frac{p_i}{1-\beta^{i}}.
    }
    This gives that
    \eqn{\label{eq:fixed_point_beta}
    \sum_{k\geq 1}kp_k
    =(1-\beta^2)
    \sum_{k\geq 1}\frac{kp_k}{1-\beta^{k}},
    }
as in  (\ref{beta-implicit-function}).
\smallskip

We conclude that 
    \eqn{
    \label{q-choice-rep}
    q_k=\frac{p_k}{1-\beta^{k}}\cdot \left(\sum_{i\geq 1}\frac{p_i}{1-\beta^{i}}\right)^{-1}.
    }
This also explains the choice in (\ref{q-choice}). The next lemma shows that indeed $\beta$ is the appropriate extinction probability to make (\ref{vertices-edges-giant}) true:

\begin{lemma}[Interpretation of $\beta(\rp)$]
\label{lem-beta}
The solution $\beta=\beta(\rp)$ of (\ref{beta-implicit-function}) is the extinction probability of the branching process with offspring distribution
$\rq^\star$ given by $q_k^\star=(k+1)q_{k+1}/\sum_{i\geq 1} iq_i.$ As a result, \eqref{vertices-edges-giant} holds.
\end{lemma}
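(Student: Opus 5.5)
The plan is to check directly that $\beta$ is a fixed point of the generating function of $\rq^\star$, then rule out the other fixed points using Condition~\ref{cond-p}. Let $G^\star(s)=\sum_{k\ge0}q_k^\star s^k=\sum_{k\ge1}kq_ks^{k-1}/\sum_{i\ge1}iq_i$. Write $Z=\sum_{i\ge1}p_i/(1-\beta^i)$, so that $q_k=p_k/((1-\beta^k)Z)$ by \eqref{q-choice}. Then
\[
G^\star(\beta)=\frac{\sum_{k} kp_k\beta^{k-1}/(1-\beta^k)}{\sum_k kp_k/(1-\beta^k)},
\]
and the identity to verify is $G^\star(\beta)=\beta$, i.e.
\[
\sum_k \frac{kp_k\beta^{k}}{1-\beta^k}=\beta^2\sum_k\frac{kp_k}{1-\beta^k}.
\]
Using $\beta^k/(1-\beta^k)=1/(1-\beta^k)-1$, the left-hand side equals $\sum_k kp_k/(1-\beta^k)-\sum_k kp_k$. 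So the identity is equivalent to
\[
\sum_k kp_k=(1-\beta^2)\sum_k kp_k/(1-\beta^k),
\]
which is exactly \eqref{beta-implicit-function}. All sums converge, since $1/(1-\beta^k)\le 1/(1-\beta)$ and $\sum_k kp_k<\infty$. Hence $\beta\in(0,1)$ is a fixed point of $G^\star$.

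Next I show that $\beta$ is the extinction probability, i.e.\ the smallest fixed point in $[0,1]$. The function $G^\star$ is convex on $[0,1]$ with $G^\star(1)=1$. Also $G^\star(0)=q_0^\star=q_1/\sum_i iq_i>0$, because $p_1>0$ by Condition~\ref{cond-p}(a). I then rule out degeneracy. If $G^\star$ were linear, then $q_k=0$ for all $k\ge3$, so $p_k=0$ for $k\ge3$ and $\sum_k kp_k\le2$, contradicting Condition~\ref{cond-p}(b). So $G^\star$ is strictly convex and has at most two fixed points in $[0,1]$, namely $1$ and possibly one point in $[0,1)$. Since $\beta<1$ is a fixed point, it is that point, and therefore it is the smallest fixed point. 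As a consequence $\mathbb E[\rq^\star]>1$, which gives supercriticality for free.

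For \eqref{vertices-edges-giant}, I invoke \cite[Theorems 2.31, 4.1 and 4.9]{Remco2} for the configuration model with type sequence $\nbig$ and limit $\rq$. These give $v_k(\cmax{C})/N\convp q_k(1-\xi^k)$ and $|\edg(\cmax{C})|/N\convp \tfrac12\sum_k kq_k(1-\xi^2)$, where $\xi$ is the extinction probability of the $\rq^\star$ branching process. The previous step shows $\xi=\beta$. The hypotheses to check are supercriticality, which was established above, and finite mean, which holds since $\sum_k kq_k\le \sum_k kp_k/((1-\beta)Z)<\infty$. Then the case $q_1>0$ or the relevant nondegeneracy conditions of those theorems apply.

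The main obstacle I expect is the uniqueness step: one must make sure that the $\beta$ solving \eqref{beta-implicit-function} is not the trivial root and that no spurious root in $(0,1)$ exists. This is handled by the strict convexity argument once positivity of $q_0^\star$ and non-linearity are checked.
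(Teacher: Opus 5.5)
Your proposal is correct and follows the paper's proof. Both verify directly that $\beta$ is a fixed point of the generating function of $\rq^\star$ by inserting \eqref{q-choice}, rewriting with $\beta^k/(1-\beta^k)=1/(1-\beta^k)-1$, and reducing to \eqref{beta-implicit-function}. Your strict-convexity argument goes further than the paper, which only asserts that a solution in $(0,1)$ is the extinction probability. Using $q_1>0$ and Condition~\ref{cond-p}(b) to rule out an affine generating function, you prove that assertion, and you also obtain the supercriticality of $\rq^\star$ needed to invoke the giant-component results.
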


\begin{proof} The extinction probability $\beta$ of the branching process with offspring distribution
$\rq^\star$ is the unique solution in $(0,1)$ of $\beta=\sum_{k\geq 0} q_k^\star \beta^k.$ We next show that $\beta(\rp)$ in \eqref{beta-implicit-function} satisfies this equation. For this, we start by using \eqref{q-choice} to rewrite, with $\beta=\beta(\rp)$,
    \eqan{
    \sum_{k\geq 0} q_k^\star \beta^k&=\frac{\sum_{k\geq 0} (k+1)p_{k+1} \beta^k/(1-\beta^{k+1})}{\sum_{k\geq 0} (k+1)p_{k+1}/(1-\beta^{k+1})}\nonumber\\
    &=\frac{1}{\beta}\frac{\sum_{k\geq 0} (k+1)p_{k+1} \beta^{k+1}/(1-\beta^{k+1})}{\sum_{k\geq 0} (k+1)p_{k+1}/(1-\beta^{k+1})}.
    }
We write $\beta^{k+1}=1-(1-\beta^{k+1})$, and use \eqref{eq:fixed_point_beta}, to obtain
    \eqan{
    \label{survival-q}
    \sum_{k\geq 0} q_k^\star \beta^k
    &=\frac{1}{\beta}-\frac{1}{\beta}\frac{\sum_{k\geq 0} (k+1)p_{k+1}}{\sum_{k\geq 0} (k+1)p_{k+1}/(1-\beta^{k+1})}\nonumber\\
    &=\frac{1}{\beta}-\frac{1-\beta^2}{\beta}=\beta.
    }
Thus, indeed, $\beta=\beta(\rp)$ is the extinction probability of the branching process with offspring distribution $\rq^\star$.
\end{proof}

We now show how \eqref{aim-counting} follows from the above choices.
Using that $|\cC_\nn|=(\ell_{\nn}-1)!!,$ and $(2m-1)!!=2^{-m}(2m)!/m!,$ we can write
    \eqn{
    \frac{|\cC_\nbig|}{|\cC_\nn|\cdot |\cC_{\nbig-\nn}|}=\frac{\binom{\ell_{\nbig}}{\ell_{\nn}}}{\binom{\ell_{\nbig}/2}{\ell_{\nn}/2}}.
    }
Further using that, for $m$ large and $a\in (0,1)$,
    \eqn{
    \label{binom-a-n-approx}
    \binom{m}{am}
    =\e^{-(a\log{a}+(1-a)\log(1-a))m+o(m)},
    }
we can rewrite
    \eqn{
    \label{ell-N-approx}
    \frac{|\cC_\nbig|}{|\cC_\nn|\cdot |\cC_{\nbig-\nn}|}=\e^{-(\ell_{\nbig}/2)[a\log{a}+(1-a)\log(1-a)]+o(\ell_{\nbig})}.
    }
where $a=\ell_{\nn}/\ell_{\nbig}.$
We compute, using \eqref{qk-pk-choice} and \eqref{eq:fixed_point_beta},
    \eqn{
    a=\frac{\ell_{\nn}}{\ell_{\nbig}}=1-\beta^2+o(1),
    }
so that, with $\mu_{\rp}=\sum_{k\geq 1}kp_k,$
    \eqn{
    \label{ratio-configurations-approx}
    \frac{|\cC_\nbig|}{|\cC_\nn|\cdot |\cC_{\nbig-\nn}|}=\e^{o(n)}\exp\left[-\frac{n\mu_{\rp}}{2}\log{(1-\beta^2)}+\frac{n\mu_{\rp}\beta^2}{1-\beta^2}\log(\beta)\right].
    }
Similarly,
    \eqn{
    \binom{\nbig}{\nn}=
    \prod_{i\geq 1}\binom{N_i}{n_i}
    =\prod_{i\geq 1}\e^{-(a_i\log{a_i}+(1-a_i)\log(1-a_i))N_i+o(N_i)},
    }
where, by \eqref{qk-pk-choice},
    \eqn{
    a_i=\frac{n_i}{N_i}
    =\frac{p_i}{\gamma q_i}+o(1)=1-\beta^i+o(1).
    }
Thus,
    \eqan{
    \binom{\nbig}{\nn}&=
    \prod_{i\geq 1}\binom{N_i}{n_i}
    =\prod_{i\geq 1}\e^{-n_i\log{a_i}-(N_i-n_i)\log(1-a_i)+o(N_i)}\nonumber\\
    &=\e^{o(n)}
    \e^{-n\sum_{k\geq 1} p_k\log(1-\beta^k)-\log(\beta)\sum_{i\geq 1}i(N_i-n_i)}\nonumber\\
    &=\e^{o(n)}
    \e^{-n\sum_{k\geq 1} p_k\log(1-\beta^k)-\log(\beta)(\ell_{\nbig}-\ell_{\nn})}.
    }
Using that 
    \eqn{
    \ell_{\nbig}-\ell_{\nn}
    =n\ell_{\nn}
    (\ell_{\nbig}/\ell_{\nn}-1)
    =n\mu_{\rp}
    \Big(\frac{1}{1-\beta^2}-1\Big)+o(n)
    =n\mu_{\rp}\frac{\beta^2}{1-\beta^2}+o(n),
    }
we arrive at
    \eqan{
    \label{binom-approx}
    \binom{\nbig}{\nn}=
    \e^{o(n)}
    \exp\left[-n\sum_{k\geq 1} p_k\log(1-\beta^k)-\frac{n\mu_{\rp}\beta^2}{1-\beta^2}\log(\beta)\right],
    }
and dividing (\ref{ratio-configurations-approx}) and (\ref{binom-approx}), we see that the term involving $\log{(\beta)}$ cancels in the exponent, so that, by \eqref{K-def},
    \eqan{\label{eq:find_K(p)}
    \frac{|\cC_\nbig|}{|\cC_\nn|\cdot |\cC_{\nbig-\nn}|\binom{\nbig}{\nn}}
    &=\e^{o(n)}\exp\left[n\sum_{k\geq 1} p_k\log(1-\beta^k)-\frac{n\mu_{\rp}}{2}\log{(1-\beta^2)}\right]\\
    &= \e^{-K(\rp)n+o(n)},\nonumber
    }
which explains \eqref{aim-counting}.
\medskip

\paragraph{\bf Using switches to get the type sequence exactly right}
Of course, the type sequence of the giant in a uniform configuration in $\cC_\nbig$ is not exactly $\nn$, but rather something close to $\nn$, and, for heavy-tailed type sequences, $|\cG_\nn|$ is much smaller than $|\cC_\nn|$. Therefore, the above strategy needs to be adapted. 
For this, we aim for a degree sequence in the giant that is {\em bounded} by $\nn,$ rather than being exactly equal. Thus, instead of working with $\rp$, for $\veps>0$, we work with a distribution $\rp^\veps$ that approximates $\rp$ from below. 

Let $\veps > 0$. We begin by constructing a truncated version $\rp^\veps$ of $\rp$. Let $\tr$ be large enough that $\sum_{k>\tr}kp_k < \veps/2$ for all $n$ large enough. Define $\rp^\veps =(p^\veps_i)_{i\ge 1}$ by setting 
\eqn{
\label{p-veps-def}
p^\veps_i =\begin{cases}
\rho^{-1}\left(1-\frac{\veps}{i2^{i+2}}\right)p_i,&\text{for }i\le \tr, \\
0,&\text{for }i>\tr,
\end{cases}
}
with normalizing constant 
$\rho=\sum_{i=1}^\tr \left(1-\frac{\veps}{i2^{i+2}}\right)p_i$, 
so that $1-\veps/4<\rho<1$. 

We now construct a type sequence $\nbig$ with a limiting degree distribution $\rq^\veps$, such that 
a typical giant of  $C\in \cC_\nbig$ will have degree statistics close to $c n \rp^\veps $. 
Define $\rq^\veps =(q^\veps_i)_{i\ge 1}$ and $\nbig=(N_i)_{i\ge 1}$ by setting 
\begin{equation}\label{eq:defn_nbig}
q^\veps_i = \frac{p^\veps_i}{1 - \betaveps^i}\left(\sum_{k\ge 1}\frac{p^\veps_k}{1 - \betaveps^k}  \right)^{-1}
\quad\text{and}\quad
N_i = \left\lfloor q^\veps_i\left(\sum_{k\ge 1}\frac{p^\veps_k}{1 - \betaveps^k}  \right)cn \right\rfloor
\end{equation}
for $i\ge 1$, so that, as in \eqref{eq:fixed_point_beta}, 
\[
    \sum_{k\geq 1}kp^\veps_k
    =(1-\beta(\rq^\veps)^2)
    \sum_{k\geq 1}\frac{kp_k^\veps}{1-\beta(\rq^\veps)^{k}}.
\]
Let $\gamma \coloneqq \sum_{k\ge 1}\frac{p^\veps_k}{1 - \betaveps^k}$ and let $N \coloneqq \sum_{i\ge 1} N_i. $ Observe that $\lim_{n\to \infty} N/n = \gamma \rho$. 
\smallskip

We use the fact that we know the degree distribution of the giant in the configuration model on $\nbig$ (we even know its local limit; see \cite[Theorems 4.9 and 2.32]{Remco2}), so that we have good control over the degree distribution in the giant in $\cC_\nbig$. In particular, we use that it closely approximates $\nn$ from below. We then use a {\em switching argument} to increase the giant's degree statistic to $\nn$ exactly. Each such switch costs us a constant factor in our control of $|\cG_\nn|$, which explains why Theorem \ref{thm:main} identifies $|\cG_\nn|$ up to a subexponential factor. In Section \ref{sec-disc-open} below, we discuss what would be needed in order to improve upon this result.
\medskip

\paragraph{\bf Rare events in a uniform connected graph with type sequence $\nn$}
Our strategy enables us to study rare events of a uniform connected graph with prescribed degrees by using that they are rare in the giant of the larger graph. 

Let $\cGc_\nn(P)$ denote the subset of graphs in $\cGc_\nn$ that have property $P$. Here, one can think of there being many cycles, the diameter being small, etc. Similarly, let $\cC_\nbig(P)$ denote the number of configurations in $\cC_\nbig$ in which the largest component has property $P$. The following theorem allows us to lift exponential bounds on events for giants in configurations to uniform connected graphs:

\begin{theorem}[Relating rare events for uniform connected graphs to uniform configurations]
\label{thm:exp-concentration-from-CM}
Under Conditions~\ref{cond-convergence-d}(a)--(b), 

\[    \frac{|\cGc_\nn(P)|}{|\cGc_\nn|}\le \e^{o(n)} \frac{|\cC_\nbig(P)|}{|\cC_\nbig|}.
\]
\end{theorem}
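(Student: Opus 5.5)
The plan is an injection-type counting argument. We embed each connected graph with property $P$ into many configurations on $\nbig$ whose largest component has property $P$. Then we compare the total count against $|\cC_\nbig|$ using the lower bound on $|\cGc_\nn|$ from Theorem \ref{thm:main} and the computation \eqref{eq:find_K(p)}. Here $\nbig$ is the type sequence from \eqref{eq:defn_nbig}, and $\veps$ is sent to $0$ slowly at the end so that the errors become $\e^{o(n)}$.

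\textbf{Upper bound via the embedding map.} Given $G\in\cGc_\nn(P)$, a choice of labels $\binom{\nbig}{\nn}$ for the vertices of $G$ inside the vertex set of type $\nbig$, and any configuration $C'\in\cC_{\nbig-\nn}$ on the remaining vertices, we form a configuration on $\nbig$. Each simple graph $G$ corresponds to $\prod_k (k!)^{n_k}$ configurations, and all of these have the same underlying graph, so property $P$ is preserved. The glued configuration has $G$ as a component. We must ensure that this component is the \emph{largest} one, so we restrict $C'$ to configurations whose components all have fewer than $n$ vertices. Since $\nbig-\nn$ is (up to $\veps$-corrections) subcritical by Lemma \ref{lem-beta}, this is true for all but a vanishing fraction of $C'$. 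The map is injective once the labelled component is fixed, so
\[
|\cGc_\nn(P)|\cdot\prod_k (k!)^{n_k}\cdot \tfrac12|\cC_{\nbig-\nn}|\binom{\nbig}{\nn}\le |\cC_\nbig(P)|.
\]
The exact statement needs $\nbig\geq \nn$ and $\ell_\nbig-\ell_\nn$ even. If necessary we adjust $\nbig$ by $o(n)$ vertices, which changes the counts only by $\e^{o(n)}$.

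\textbf{Lower bound and combination.} Dividing by $|\cC_\nbig|$ gives
\[
\frac{|\cGc_\nn(P)|}{|\cGc_\nn|}\le 2\,\frac{|\cC_\nbig(P)|}{|\cC_\nbig|}\cdot \frac{|\cC_\nbig|}{|\cC_{\nbig-\nn}|\binom{\nbig}{\nn}\,|\cGc_\nn|\prod_k(k!)^{n_k}}.
\]
By Theorem \ref{thm:main}, $|\cGc_\nn|\prod_k(k!)^{n_k}=\e^{-K(\rp)n+o(n)}|\cC_\nn|$. By \eqref{eq:find_K(p)}, the remaining ratio $|\cC_\nbig|/(|\cC_\nn||\cC_{\nbig-\nn}|\binom{\nbig}{\nn})$ equals $\e^{-K(\rp^\veps)n+o(n)}$ (up to the scaling constant $c$). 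The last factor is therefore $\e^{(K(\rp)-K(\rp^\veps))n+o(n)}$.

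\textbf{Main obstacle: the $\veps$-dependence.} Because $\nbig$ is built from the truncated $\rp^\veps$, the two exponents differ by $K(\rp)-K(\rp^\veps)$. There are two ways to handle this. One is to use continuity of $K$ (Proposition \ref{prop:K_cont}) and choose $\veps=\veps_n\to0$ slowly, so that the mismatch becomes $o(n)$. The other is to define $\nbig$ directly from $\rp$ (when $\nn$ itself is dominated) so that $K$ matches exactly. A further point is that the sub-exponential errors in \eqref{eq:find_K(p)} are only uniform for fixed $\veps$, so a diagonal argument is needed. Compared with this, checking that $G$ is the largest component is routine.
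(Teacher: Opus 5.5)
Your proposal is correct and is essentially the paper's argument. You inject triples (a connected graph with $P$, a label choice counted by $\binom{\nbig}{\nn}$, a configuration in $\cC_{\nbig-\nn}$) into $\cC_\nbig(P)$, and then combine the lower bound of Theorem~\ref{thm:main} with the ratio estimate of Proposition~\ref{prop:Kp} at $\np=\nn$. That proposition, not the heuristic \eqref{eq:find_K(p)}, is the rigorous input; its error is a general $\delta(\veps)n$ rather than only $K(\rp)-K(\rp^\veps)$, but it is absorbed the same way as $\veps\to0$.

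The differences are minor. You inject $\cSc_\nn(P)$ directly instead of passing through $\cCc_\nn(P)$ and Proposition~\ref{prop:simple_prob}. You also force the embedded copy to be the largest component via subcriticality of $\nbig-\nn$, whereas the paper's $N/n$ multiplicity bound tacitly assumes this. Together with your explicit handling of $\nbig\ge\nn$ and the diagonal choice $\veps=\veps_n\to0$, this makes your version more careful than the paper's on exactly the points the paper glosses over.
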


We observe that the sub-exponential multiplicative error makes the bound only meaningful for exponentially unlikely events. For the proof of Theorem \ref{thm:exp-concentration-from-CM}, we exploit \eqref{approx_giant} and a variant that takes property $P$ into account:
\[
|\cGc_\nn(P)|\approx\frac{|\cG_\nbig(P)|}{ |\cG_{\nbig-\nn}|\binom{\nbig}{\nn}}=\frac{|\cG_\nbig(P)|}{|\cG_\nbig|}\frac{|\cG_\nbig|}{ |\cG_{\nbig-\nn}|\binom{\nbig}{\nn}}\approx \frac{|\cG_\nbig(P)|}{|\cG_\nbig|}|\cGc_\nn|,
\]
so that rearranging yields 
\[
\frac{|\cGc_\nn(P)|}{|\cGc_\nn|}\approx \frac{|\cG_\nbig(P)|}{|\cG_\nbig|}.
\]
Making this heuristic precise yields the bound in Theorem~\ref{thm:exp-concentration-from-CM}; see Section~\ref{sec-proof-local-limit}.
\medskip

\paragraph{\bf Local structure of a uniform connected graph with type sequence $\nn$}
In the proof of Theorem \ref{thm:local-neighbourhoods}, we use that the local neighbourhoods in the giant of a uniform configuration in $\cC_\nbig$ satisfy an exponential concentration bound. With Theorem~\ref{thm:exp-concentration-from-CM}, we can then show that the same holds for the neighbourhoods in a uniform graph in $\cGc_\nn$. 

Exponential concentration of neighbourhoods in the giant 
was proved by Bollob\'as and Riordan in \cite[Theorem 25]{BolRio15}. 
As a result, 
the proportion of such neighbourhoods converges in probability to the claimed limit. By \cite[Theorem 2.15(b)]{Remco2}, which shows that convergence in probability of the neighbourhood counts is equivalent to local convergence in probability, we then further obtain {\em local convergence} in probability of a uniform connected graph with type sequence $\nn$ (see Theorem \ref{thm:local-limit} below). We refer to Bordenave and Caputo \cite{BorCap15} for a large deviation principle of the local structure of the configuration model for {\em bounded} degrees.

\subsection{Discussion and open problems}
\label{sec-disc-open}
In this section, we discuss our results, and propose some open problems.
\medskip

\paragraph{\bf Sharper asymptotics.} Theorem \ref{thm:main} identifies the number of connected graphs with type sequence $\nn$ up to the exponential term under Condition \ref{cond-convergence-d}(a)--(b), and shows that this number is exponentially smaller than the total number of simple graphs with that type sequence under Condition \ref{cond-p}. For the number of simple graphs with type sequence $\nn$, much sharper results are known. It would be of interest to improve our results, and identify higher-order contributions to the number of connected graphs with type sequence $\nn$. One main restricting factor in our proof is the need for {\em switchings} in order to exactly match the degree distributions. In order to obtain %go for 
sharp asymptotics, one would need to improve the asymptotics of the degree distribution in the giant in $\cGc_{\nbig},$ and prove a joint local central limit for {\em all} degrees in the giant. Such local central limit theorems would likely give rise to {\em polynomial corrections} in Theorem \ref{thm:main}. They go way beyond what is known for the configuration model, where a central limit theorem is known for the giant's {\em size} under certain conditions (see e.g.\ \cite{BarRol19}). It would be natural to start with settings where the degrees are uniformly bounded.
\medskip

\paragraph{\bf Number of connected graphs with a given number of edges.} There has been intense research into the number of connected graphs with a fixed number $m$ of edges \cite{BenCanMck90, Boll84a,HofSpe05, Lucz90b, PitWor05,Reny59, Spen97, Wrig77,Wrig80}. The methods in \cite{HofSpe05} are reminiscent to ours, in that they also view a connected graph of a fixed number of edges as the giant of a larger, not necessarily connected graph. Relying on \cite{HofSpe05}, an extension of our results to this case allows us to identify the {\em degree distribution}, and more generally the {\em local structure}, of connected graphs with a linear number of edges, as well as to sample them efficiently. In particular, it allows us to identify the {\em local limit} of uniform graphs with a fixed sparse number of edges. This is left to a follow-up work \cite{BelDonHof26b}.
\medskip

\paragraph{\bf Sampling connected random graphs with prescribed degrees}
Our results allow one to sample uniformly random connected graphs with prescribed degrees in provably subexponential time, by sampling the configuration model with the special type sequence $\nbig,$ and considering its giant component. Then, one needs to accept the first realisation in which this giant has type sequence {\em exactly} equal to $\nn$. If one were to have better control over the probability that this sampling procedure is successful, i.e., by characterising the probability that the configuration model produces a simple graph in which the giant has type sequence exactly $\nn$ more precisely, then one would rigorously obtain better bounds on the number of repetitions needed until success.
\medskip

\paragraph{\bf Organisation of this paper.}
We start in Section~\ref{sec:config} by formally introducing the configuration model, some of our notation, and a first result on the simplicity probability of the configuration model. Then, in Section~\ref{sec-proof-configurations} we formalize the idea that we can count (simple) configurations with a degree sequence that approximates $\nn$ by studying the giant in a (simple) configuration with type $\nbig$. In Section \ref{sec-proof-switchings}, we then use {\em switchings} to exactly match the degree distribution in the giant in $\cG_{\nbig}$ to $\nn$ and prove our main result under the extra Condition~\ref{cond-convergence-d}(c). In Section \ref{sec-proof-completion}, we complete the proof of Theorem \ref{thm:main}, using a truncation to overcome the dependency on Condition~\ref{cond-convergence-d}(c). Then, in Section \ref{sec-proof-local-limit}, we complete the proofs of Theorems \ref{thm:local-neighbourhoods} and \ref{thm:exp-concentration-from-CM}. Finally, in Section~\ref{sec-proof-continuity-large-deviations} we establish the continuity of $K(\rp)$, which is a technical result that we use to control the effect of our truncations in the rest of the paper.

\section{The configuration model}
\label{sec:config}
We will frequently consider {\em configurations} with a given type sequence, as they closely correspond to graphs with a given type sequence, and are often easier to analyse. In this section we state several results for configurations, which will be used later.
\smallskip

A {\em configuration} with type sequence $\mm$ (and corresponding degree sequence $\dd = (d_1,\ldots, d_n)$) is a pairing of elements of the set
\[\{(i,j)\colon i\in [n]; j\in [d_i]\}.\]
We let ${\cC}_\mm$ denote the set of configurations with type sequence $\mm$.
\smallskip

A configuration in $\cC_\mm$ naturally maps to a multi-graph on $[n]$ with type sequence $\mm$, in the following way: for every $i,i' \in [n],$ an edge is added between $i$ and $i'$ for each $j,j'$ such that $(i,j)$ and $(i',j')$ are paired in the configuration.
When the configuration is chosen uniformly at random, this random multi-graph is called the {\em configuration model.}

\smallskip

By a slight abuse of notation, we view the components of this multi-graph as the components of the configuration, and for type sequences $\mm, \mp$, we write 
\begin{align*}
&\cCc_\mm=\left\{C\in \cC_\mm\colon  C\text{ is connected}\right\},\qquad \text{and}\\
&\cC^{\mp}_\mm = \left\{C\in \cC_\mm\colon C\text{ has a component with type }\mp\right\}.
\end{align*}

As noted in Section~\ref{sec-overview}, for $\ell_\mm=\sum_{k\ge 1}im_i,$ it holds that
$|\cC_\mm|=(\ell_\mm-1)!!$. For a configuration $C \in \cC_{\mm}$, we let $\cmax{C}$ denote  the largest component in $C$.

We let $\cS_\mm$ denote the set of configurations with type sequence $\mm$ that correspond to a {\em simple} graph, i.e., a graph that has no self-loops nor multi-edges. Then, since every simple graph with type $\mm$ corresponds to exactly $\prod_{k\ge 1}(k!)^{m_k}$ configurations,

\begin{equation}\label{eq:config_vs_graph} |\cG_\mm|=\frac{|\cS_\mm|}{\prod_{k\ge 1}(k!)^{m_k}}.\end{equation}

We use the following proposition that shows that under our conditions, the proportion of configurations that is simple does not decay exponentially; a similar result was proved by Bollob\'as and Riordan in \cite{BolRio15}: 
\begin{proposition}[Number of configurations vs.\ simple graphs]
\label{prop:simple_prob}
Under Conditions \ref{cond-convergence-d}(a)--(b), and assuming that $p_1>0$, as $n\rightarrow\infty,$
\[|\cC_\nn|=\e^{o(n)}|\cS_\nn|.\]
\end{proposition}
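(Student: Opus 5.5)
Since $\cS_\nn\subseteq\cC_\nn$, only the lower bound $|\cS_\nn|\ge \e^{-o(n)}|\cC_\nn|$ needs proof. We do not assume Condition \ref{cond-convergence-d}(c), so the maximal degree may be of order $o(n)$ but unbounded, and the number of self-loops and multi-edges in the configuration model need not be tight. The plan is therefore to isolate the high-degree vertices, pay a subexponential price for them, and reduce to a bounded-degree problem where the simplicity probability is bounded away from zero.

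Fix $\veps>0$ and choose $M$ with $\sum_{k>M}kp_k<\veps$. For $n$ large, the total degree of vertices of degree $>M$ is at most $\veps n$ by Condition \ref{cond-convergence-d}(b). We lower bound $|\cS_\nn|$ by counting only a structured subfamily of configurations. Each half-edge of a high-degree vertex (degree $>M$) is paired with a half-edge of a distinct degree-1 vertex. This is possible since $p_1>0$ gives $n_1\approx p_1 n\gg \veps n$ for $\veps$ small. Different high-degree half-edges go to different degree-1 vertices, so no loops or multi-edges arise there. The remaining half-edges, which now all belong to vertices of degree $\le M$, are paired by an arbitrary simple configuration.

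Counting this family gives the falling factorial $(n_1)_{L}$, where $L\le\veps n$ is the high-degree total, times $|\cS_{\nn'}|$, where $\nn'$ is the bounded-degree remainder (with $n_1-L$ vertices of degree 1). For $\nn'$, Condition \ref{cond-convergence-d}(c) holds trivially, so the classical result (\cite[Chapter 7]{Remco1}, Janson) gives $|\cS_{\nn'}|\ge c\,|\cC_{\nn'}|$ with $c>0$ depending only on $M$. Comparing with $|\cC_\nn|=(\ell_\nn-1)!!$, we use $(\ell-1)!!/(\ell-2L-1)!!\le \ell^{L}$ together with $(n_1)_L\ge (n_1-L)^L\ge (p_1n/2)^L$. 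This yields
\[
\frac{|\cS_\nn|}{|\cC_\nn|}\ge c\Big(\frac{p_1 n}{2\ell_\nn}\Big)^{L}\ge c\,\e^{-C\veps n\log(1/\veps)},
\]
or more crudely $\e^{-C'\veps n}$, with $C'$ depending on $p_1$ and $\mu_{\rp}$.

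The main obstacle is exactly this comparison. The forced pairings are very special, so we must check that the combinatorial loss is only $\e^{O(\veps n)}$ and not something like $\e^{-L\log n}$. Here the linearity of $\ell_\nn$ in $n$ saves us, since each forced pairing loses only a bounded factor $\ell_\nn/n_1$. A more careful count would let every half-edge of a high-degree vertex choose among degree-1 half-edges, recovering the bulk of $(\ell-1)!!$ up to a factor $(C)^{L}$.

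Finally, since $\veps>0$ is arbitrary, letting $\veps\downarrow 0$ slowly gives $|\cS_\nn|\ge \e^{-o(n)}|\cC_\nn|$, completing the proof.
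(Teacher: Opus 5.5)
Your proposal is correct and follows the paper's proof. You pair all $L\le\veps n$ half-edges of vertices of degree $>M$ with distinct degree-1 vertices, apply the bounded-degree simplicity result to the remaining type sequence, and check that the loss is only $\e^{O(\veps n)}$. The only difference is cosmetic: you bound $(\ell_\nn-1)!!/(\ell_\nn-2L-1)!!\le \ell_\nn^L$ and $(n_1)_L\ge(n_1-L)^L$ directly, whereas the paper computes the exact exponential rate via Stirling. Note that your direct bound gives $\e^{-C'\veps n}$, which is sharper than your $\e^{-C\veps n\log(1/\veps)}$ bound, not cruder as you wrote.
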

\begin{proof}

    Fix $0<\veps<p_1/2$. By Conditions \ref{cond-convergence-d}(a)--(b), we may choose $M$ large enough such that $\sum_{k>M}kn_k<\veps n$ for all $n$ large enough. Write $\delta_n=\sum_{k>M}kn_k$ and $\np=(n_1-\delta_n, n_2,\dots, n_M,0,\dots)$. By Conditions \ref{cond-convergence-d}(a)--(b), it holds  that $\delta_n=\delta n(1+o(1))$ for $\delta=\sum_{k>K}kp_k\le \veps$. 
   
    Observe that $\cS_\nn$ contains the simple configurations in $\cC_\nn$ in which all $\delta_n$ half-edges of vertices with degree larger than $M$ are paired to half-edges of vertices with degree $1$. Thus,  
    \[ |\cS_\nn|\ge \frac{n_1! }{(n_1-\delta_n)! }|\cS_\np|,\]
   where the right-hand side enumerates such particular configurations. We see that $\np$ satisfies Conditions \ref{cond-convergence-d}(a)-(c), so, in particular, there is a constant $c$ such that 
    $|\cS_\np|=(c+o(1))|\cC_\np|$ by \cite[Theorem 7.12]{Remco1}, so that 
    
    \[|\cS_\nn|\ge (c+o(1)) \frac{n_1! }{(n_1-\delta_n)! } |\cC_\np|.\] 
    
    Finally, by Stirling's approximation, we see that, for $\mu_{\rp}=\sum_{k\ge 1}kp_k$, 
    \[\frac{1}{n}\log\left(\frac{n_1! }{(n_1-\delta_n)! }\frac{|\cC_\np|}{|\cC_\nn|}\right)\to \log\left( \frac{p_1^{p_1} }{(p_1-\delta)^{p_1-\delta}}\frac{(\mu_{\rp}-2\delta)^{\mu_{\rp}/2-\delta}}{\mu_{\rp}^{\mu_{\rp}/2}}\right),\]
    which, recalling that $\delta \le \veps$, can be made arbitrarily close to $0$ by choosing $\veps$ small enough. This proves the statement.\end{proof}

\section{Proof: Approximating the degree sequence in the giant}
\label{sec-proof-configurations}
For any $\veps>0$, define 
\eqn{
\label{M-eps-def}\nps=\nps(\nn)=\{\mm\colon  \mm \le \nn; \ell_\np >(1-\veps)\ell_\nn\},}
so that $\nps$ is a set of type sequences that approximate $\nn$ from below, and write
\[
\cC^{\nps}_\nn = \bigcup_{\mm \in \nps}\cC_\nn^\mm
\]
for the configurations with type $\nn$ that contain a component with type $\nps$.

Our main contribution of this section is the following proposition, which shows that the sum of the connectivity probability of all sequences in $\nps$ has rate function $K(\rp)$: (Observe that if we could set $\veps=0$, then our result for configurations, as well as for simple graphs under the additional  Condition~\ref{cond-convergence-d}(c), would follow.) 

\begin{proposition}[$K(\rp)$ is the exponential rate of connectivity]
\label{prop:sum_ratios}
    Under Conditions~\ref{cond-convergence-d}(a)--(b) and \ref{cond-p}, as $n\rightarrow \infty$, for some function $\delta(\veps)$ that tends to $0$ as $\veps \searrow 0$, 
    \begin{align*}\sum_{\np\in \nps} \frac{|\cCc_\np|}{|\cC_\np|}&=\e^{-K(\rp)+\delta(\veps)n},\\
    \intertext{and if, additionally, Condition~\ref{cond-convergence-d}(c) holds, then also,}
    \sum_{\np\in \nps} \frac{|\cSc_\np|}{|\cS_\np|}&=\e^{-K(\rp)+\delta(\veps)n}.\end{align*}
     
\end{proposition}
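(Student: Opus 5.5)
We prove matching upper and lower bounds on $\sum_{\mm\in\nps}|\cCc_\mm|/|\cC_\mm|$ by double counting configurations in $\cC_\nbig$, with $\nbig$ built from $\rp^{\veps}$ as in \eqref{eq:defn_nbig}. For a constant $c$ fixed below, $\nbig$ is chosen so that the giant of a uniform configuration in $\cC_\nbig$ has type close to $cn\rp^\veps\le\nn$.

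The basic identity is as follows. For $\mm\le\nbig$ that is not too small (so that the component of type $\mm$ is the unique giant), every $C\in\cC_\nbig^{\mm}$ decomposes uniquely into a choice of vertex labels, $\binom{\nbig}{\mm}$; a connected configuration on the giant, in $\cCc_\mm$; and an arbitrary configuration on the remainder, in $\cC_{\nbig-\mm}$. Hence
\[
\frac{|\cC_\nbig^{\mm}|}{|\cC_\nbig|}=\frac{|\cCc_\mm|}{|\cC_\mm|}\cdot\frac{|\cC_\mm|\,|\cC_{\nbig-\mm}|\binom{\nbig}{\mm}}{|\cC_\nbig|}.
\]
The second factor is the explicit ratio computed in \eqref{aim-counting}--\eqref{eq:find_K(p)}. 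Its exponential rate, as a function of $(\mm/n,\nbig/n)$, is continuous on the relevant compact set of sequences. By \eqref{eq:find_K(p)} (with $\rp^\veps$ in place of $\rp$), whenever $\mm/n$ is close to $c\rp^\veps$ this factor equals $\e^{K(\rp^\veps)n+o(n)}$ up to an error $\e^{\delta(\veps)n}$. By the continuity of $K$ (Section~\ref{sec-proof-continuity-large-deviations}), $K(\rp^\veps)\to K(\rp)$.

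\textbf{Lower bound.} By \eqref{vertices-edges-giant} and Lemma~\ref{lem-beta} applied to $\rq^\veps$, with probability $1-o(1)$ the giant of a uniform $C\in\cC_\nbig$ has type $\mm$ with $m_i=cnp^\veps_i(1+o(1))$. Choosing $c=\rho(1-\veps/8)$, say, puts this $\mm$ inside $\nps$: the factors $1-\veps/(i2^{i+2})$ guarantee $\mm\le\nn$ coordinatewise for $i\le M$, while $m_i=0$ for $i>M$ and $\ell_\mm>(1-\veps)\ell_\nn$. Summing the identity over such $\mm$ gives $1-o(1)\le \e^{K(\rp)n+\delta(\veps)n}\sum_{\mm\in\nps}|\cCc_\mm|/|\cC_\mm|$.

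\textbf{Upper bound.} This is reversed by choosing a second $\nbig'$ (same construction, now with $\rp$ itself or a slight enlargement) for which each $\mm\in\nps$ fits. Use $\sum_\mm|\cC_\nbig^\mm|\le|\cC_\nbig|$, which holds since a configuration has at most one giant component. The ratio factor for every $\mm\in\nps$ is at least $\e^{K(\rp)n-\delta(\veps)n}$: the rate function is minimized near $\mm\approx n\rp$ and $\nps$ only perturbs by $O(\veps)$, so continuity applies. Crucially, $|\nps|=\e^{o(n)}$ after truncating to finitely many degree classes plus a small tail. The tail degrees $>M$ carry at most $\veps n$ half-edges, so the number of tail patterns is $\e^{O(\veps\log(1/\veps))n}$, which can be absorbed into $\delta(\veps)$.

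\textbf{Simple version.} Under Condition~\ref{cond-convergence-d}(c), repeat the same decomposition with $\cS$ in place of $\cC$: a configuration is simple iff both the giant and the remainder are simple. Proposition~\ref{prop:simple_prob} (indeed, simplicity probabilities bounded away from $0$ under (c), by \cite[Theorem 7.12]{Remco1}) shows that replacing $|\cC|$ by $|\cS|$ in all three places costs only $\e^{o(n)}$.

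The main obstacle is the upper bound. It requires uniform control of the explicit ratio over all $\mm\in\nps$, and in particular ensuring that a component of type $\mm$ can be realized inside $\nbig$ (requiring $\mm\le\nbig$). It also requires that the $\beta$-log terms cancel only at the tuned point, so that off-tuned $\mm$ do not give a larger contribution. I would handle this by maximizing over $\mm$ directly with the $\log\beta$ cancellation in \eqref{eq:find_K(p)}, using convexity of the entropy terms.
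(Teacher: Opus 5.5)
Your proposal is correct, but the upper bound takes a different route from the paper. The lower bound is the paper's argument: the truncated $\nbig$ built from $\rp^\veps$, the giant of a uniform element of $\cC_\nbig$ having type in $\nps$ with high probability, the inequality $|\cC^{\mm}_\nbig|\le\binom{\nbig}{\mm}|\cCc_\mm|\,|\cC_{\nbig-\mm}|$, and the explicit ratio in which the $\log\beta$ terms cancel. Your choice $c=\rho(1-\veps/8)$ instead of $c=\rho$ is immaterial.

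For the upper bound, the paper uses the same single $\nbig$ for both directions, via the asymptotic \emph{equality} of Corollary~\ref{cor:no_giant} and the uniform estimate of Proposition~\ref{prop:Kp}. You instead take a second, untruncated $\nbig'\ge\nn$. You then use only the deterministic fact that a configuration in $\cC_{\nbig'}$ has boundedly many components with type in $\nps$. This needs no information about the giant of $\cC_{\nbig'}$. It also controls every term of the sum, including $\mm\in\nps$ with degrees above $\tr$ (for instance $\mm=\nn$). For those $\mm$ the truncated $\nbig$ has $\binom{\nbig}{\mm}=0$, so the single-$\nbig$ identity gives no control of them. The cost is that the ratio asymptotics must be done over unboundedly many degree classes. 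This is harmless: there are $O(\sqrt n)$ nonzero classes, so the Stirling and rounding errors total $o(n)$ by Condition~\ref{cond-convergence-d}(b).

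Several details need fixing.
\begin{enumerate}
\item Define $\nbig'$ from $\nn$ itself, e.g.\ $N'_i=\lceil n_i/(1-\beta^i)\rceil$. Building it from $\rp$ need not give $N'_i\ge n_i$ (take $p_i=0<n_i$).
\item ``At most one giant'' is false in general, since $N'/n\approx\sum_i p_i/(1-\beta^i)$ can exceed $2$. Replace it by the multiplicity bound $N'/\min_{\mm\in\nps}m=O(1)$, as in the proof of Corollary~\ref{cor:no_giant}.
\item The count of $|\nps|$ is not needed, because you sum $|\cC^{\mm}_{\nbig'}|$ against $|\cC_{\nbig'}|$.
\item Drop the convexity step and the claim that the rate is ``minimized near $n\rp$''; it is unjustified and unnecessary. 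For $\mm\in\nps$ one has $n-m=\sum_i(n_i-m_i)\le\ell_\nn-\ell_\mm<\veps\ell_\nn$. Lemma~\ref{lem:binoms_bound} then gives $\binom{\nbig'}{\mm}\ge\binom{\nbig'}{\nn}\binom{N'}{n-m}^{-1}\ge\binom{\nbig'}{\nn}\e^{-\delta(\veps)n}$ uniformly on $\nps$. The configuration factor depends only on $\ell_\mm$. This is exactly the uniform comparison used in Proposition~\ref{prop:Kp}.
\end{enumerate}
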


The proof of Proposition \ref{prop:sum_ratios} will be given at the end of this section. We first collect the needed intermediate results.
\smallskip
\paragraph{\bf The type of the giant is in $\nps$} 
The following proposition shows that a typical graph with type $\nbig$ has a giant in $\nps$:
\begin{proposition}[Type sequence giant in $\cC_{\nbig}$ is  close to $\nn$]
\label{prop:typical_giant}
If $\nn$ satisfies Conditions \ref{cond-convergence-d}(a)--(b) and \ref{cond-p}, for $\nbig$ as in \eqref{p-veps-def}--\eqref{eq:defn_nbig}, as $n\rightarrow \infty$,
\[|\cC_{\nbig}^{\nps}|=(1+o(1))|\cC_{\nbig}|.\]
If $\nn$ additionally satisfies Condition \ref{cond-convergence-d}(c), then also 
\[|\cS_{\nbig}^{\nps}|=(1+o(1))|\cS_{\nbig}|.\]
\end{proposition}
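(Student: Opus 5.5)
We want to show that, with probability $1-o(1)$, the giant $\cmax{C}$ of a uniform configuration in $\cC_\nbig$ has type sequence $\mm$ with $\mm\le\nn$ and $\ell_\mm>(1-\veps)\ell_\nn$. The simple case then follows by conditioning.

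\emph{Limit laws in the large graph.} Since $\rp^\veps$ has bounded support ($p_i^\veps=0$ for $i>\tr$), $\nbig$ has bounded degrees. Hence $\nbig$ satisfies Conditions~\ref{cond-convergence-d}(a)--(c) with limit $\rq^\veps$ and $N/n\to\gamma\rho$ (the constant $c$ in \eqref{eq:defn_nbig} is taken to be $1$). Also $\sum_k k q_k^\veps$ is large enough for supercriticality: Lemma~\ref{lem-beta}, applied to $\rp^\veps$, gives $\betaveps\in(0,1)$ as the extinction probability of $(\rq^\veps)^\star$. Positivity of $p^\veps_1$ and $\sum_k kp_k^\veps>2$ for small $\veps$ come from Condition~\ref{cond-p}. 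The giant results behind \eqref{vertices-edges-giant} then give, for each $k\le\tr$,
\[
\frac{v_k(\cmax{C})}{n}\convp \gamma\rho\, q^\veps_k(1-\betaveps^k)=\rho\,p^\veps_k=\Big(1-\frac{\veps}{k2^{k+2}}\Big)p_k ,
\]
and $v_k(\cmax{C})=0$ for $k>\tr$. By construction, therefore, the limiting degree counts of the giant sit strictly below $p_k$ by a multiplicative margin.

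\emph{Checking the two conditions.} First, $\mm\le\nn$. For $k\le\tr$ with $p_k>0$, we have $n_k/n\to p_k>(1-\veps/(k2^{k+2}))p_k$, so $m_k<n_k$ with high probability. Since there are only finitely many $k\le\tr$, a union bound handles all of them. For $k$ with $p_k=0$, both $p^\veps_k$ and $q_k^\veps$ vanish. I would modify the definition so that $N_k=0$ whenever $n_k=0$ or $p_k=0$; alternatively, the floor in \eqref{eq:defn_nbig} already gives $N_k=0$. In either case $m_k\le N_k=0\le n_k$. Degrees above $\tr$ are absent. Second, $\ell_\mm>(1-\veps)\ell_\nn$. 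We have
\[
\frac{\ell_\mm}{n}\convp\sum_{k\le\tr}k\Big(1-\frac{\veps}{k2^{k+2}}\Big)p_k\ \ge\ \sum_{k\le \tr}kp_k-\frac{\veps}{4},
\]
and $\sum_{k\le\tr}kp_k>\mu_{\rp}-\veps/2$ by the choice of $\tr$. So $\ell_\mm/n$ is asymptotically at least $\mu_{\rp}-3\veps/4$, while $(1-\veps)\ell_\nn/n\to\mu_{\rp}-\veps\mu_{\rp}$. Since $\mu_{\rp}>2$, we have $\veps\mu_{\rp}>3\veps/4$, which gives the strict inequality with high probability.

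\emph{Simple configurations.} Under the bounded degrees of $\nbig$, the probability that $\cS_\nbig$ occurs converges to a positive constant (\cite[Theorem 7.12]{Remco1}). Hence $|\cC_\nbig\setminus\cC_\nbig^{\nps}|=o(|\cC_\nbig|)=o(|\cS_\nbig|)$, and the second statement follows.

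The main obstacle is the giant law itself. We need convergence in probability of $v_k(\cmax{C})/N$ for each $k$, and the limit must be expressed through the extinction probability of $(\rq^\veps)^\star$. This is exactly what Lemma~\ref{lem-beta} together with \cite[Theorems 2.31, 4.1, 4.9]{Remco2} provide. Beyond that, the remaining work is bookkeeping: matching $\gamma,\rho$ and handling degrees with $p_k=0$.
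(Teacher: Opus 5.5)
Your proposal is correct and follows the paper's proof essentially step for step. The giant's degree law (Theorem 4.9 of \cite{Remco2}) gives $v_k(\cmax{C})/n\convp(1-\frac{\veps}{k2^{k+2}})p_k$ for $k\le\tr$, hence $\mm\le\nn$ and $\ell_\mm>(1-\veps)\ell_\nn$ with high probability. The simple case transfers because the bounded-degree sequence $\nbig$ has simplicity probability bounded away from zero. Your explicit handling of $p_k=0$, and your remark that Condition~\ref{cond-convergence-d}(c) on $\nn$ is not actually needed here, are small improvements over the paper's write-up.

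One slip to fix: the constant in \eqref{eq:defn_nbig} must be $c=\rho$, not $c=1$. Your own computation uses $N/n\to\gamma\rho$. With $c=1$ the limit would be $p^\veps_k=\rho^{-1}(1-\frac{\veps}{k2^{k+2}})p_k$, which exceeds $p_k$ whenever $p_k>0$ and $k2^{k-1}>1/p_1$ (since $1-\rho\ge\veps p_1/8$). That would break $\mm\le\nn$.
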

\begin{proof}
We study the giant components of configurations in  $\cC_\nbig.$ 
For $C\in \cC_{\nbig}$ and $k\le \tr$, let $v_k(\cmax{C})$ denote the number of degree-$k$ vertices in the giant component $\cmax{C}$ of $C$.  

It follows from \cite[Theorem 4.9]{Remco2} (see also \cite{JanLuc09}) that 
\[
\frac{v_k(\cmax{C})}{n}\convp \gamma \rho q^\veps_k(1-\betaveps^k) = (1-\tfrac{\veps}{k2^{k+2}})p_k
\]
for $1\le k \le \tr$, and $v_k(\cmax{C})=0$ for $k>\tr$ because $N_k=0$ for $k>\tr$.  This implies that 
\[
|\{C\in \cC_\nbig: v_k(\cmax{C}) \in (1-\tfrac{\veps}{k2^{k+1}}, 1-\tfrac{\veps}{k2^{k+3}} )p_kn \text{ for all } k\le \tr\}| = (1+o(1))|\cC_\nbig|.
\]
Then  
\[\sum_{k=1}^\tr(1-\tfrac{\veps}{k2^{k+1}})kp_k> \sum_{k=1}^\tr kp_k -\veps/2 > \sum_{k\ge1} kp_k-\veps, \]
where the last inequality follows from our choice for $\tr$.
Therefore, for $n$ large enough,
\[
\{C\in \cC_\nbig\colon v_k(\cmax{C}) \in (1-\tfrac{\veps}{k2^{k+1}}, 1-\tfrac{\veps}{k2^{k+3}} )p_kn \text{ for all } k\le \tr\}\subseteq \cC_\nbig^\nps,
\]
and thus
\[
|\cC_\nbig^\nps| = (1+o(1))|\cC_\nbig|.
\]
The second statement follows analogously, observing that under Condition \ref{cond-convergence-d}(c), the conclusion from  \cite[Theorem 4.9]{Remco2} also holds for simple configurations by \cite[Corollary 7.17]{Remco1}. 
\end{proof}

\smallskip
\paragraph{\bf Decomposing to access the giant} We will now make the decomposition of a configuration with type $\nbig$ into the giant and its complement in \eqref{approx_giant} precise. We start with the following combinatorial lemma:
\begin{lemma}[Counting connected graphs as components in larger graphs]
\label{lem:decomp_giant}
    Let $\nn$ be a type sequence and let $\mathcal{N}$ be a set of type sequences such that $\np\le \nn$ for all $\np\in \mathcal{N}$. Then
    \begin{align} \label{eq:decomp_giant1}\sum_{\np\in\mathcal{N}}  \binom{\nn}{\np}|\cCc_{\np}|\cdot|\cC_{\nn-\np}|&=\sum_{C\in \cC_\nn} \#\{\text{components in }C\text{ with type in }\mathcal{N}\},\quad \text{ and}\\ \label{eq:decomp_giant2}
    \sum_{\np\in\mathcal{N}}  \binom{\nn}{\np}|\cSc_{\np}|\cdot|\cS_{\nn-\np}|&=\sum_{S\in \cS_\nn} \#\{\text{components in }S\text{ with type in }\mathcal{N}\}.\end{align}
\end{lemma}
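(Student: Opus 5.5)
We prove \eqref{eq:decomp_giant1} by double counting the set of pairs $(C,\mathcal{K})$, where $C\in\cC_\nn$ and $\mathcal{K}$ is a component of $C$ whose type lies in $\mathcal{N}$. The right-hand side of \eqref{eq:decomp_giant1} counts these pairs by first choosing $C$ and then choosing $\mathcal{K}$ among its components with type in $\mathcal{N}$. For the left-hand side, we sort the pairs according to the type $\np\in\mathcal{N}$ of $\mathcal{K}$, and for fixed $\np$ we build an explicit bijection between such pairs and triples $(V,C_1,C_2)$.

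Fix $\np\le\nn$. A pair $(C,\mathcal{K})$ in which $\mathcal{K}$ has type $\np$ is determined by three pieces of data. The first is the vertex set $V$ of $\mathcal{K}$, which contains exactly $m_i$ of the $n_i$ vertices of degree $i$, for each $i$; there are $\binom{\nn}{\np}$ such sets. The second is the restriction of $C$ to the half-edges of $V$. Since $\mathcal{K}$ is a component, every half-edge at a vertex of $V$ is paired within $V$, so this restriction is a connected configuration on $V$. The third is the restriction of $C$ to the half-edges of the complementary vertices, which is an arbitrary configuration on them.

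To identify these restrictions with $\cCc_\np$ and $\cC_{\nn-\np}$, we relabel the vertices of $V$ and of its complement order-preservingly within each degree class, following the canonical labelling introduced for $\cG_\nn$ in Section~\ref{sec-overview}. This relabelling is a fixed bijection, so it changes no counts.

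Conversely, take a set $V$ of the type above, a connected configuration $C_1\in\cCc_\np$ placed on $V$, and an arbitrary configuration $C_2\in\cC_{\nn-\np}$ placed on the complement. Their union is a configuration $C\in\cC_\nn$. In $C$, the vertex set $V$ carries a connected subconfiguration with no half-edge paired outside $V$, so $V$ is exactly a component of $C$, and its type is $\np$. The two maps are inverse to each other. Hence the number of pairs with $\mathcal{K}$ of type $\np$ equals $\binom{\nn}{\np}|\cCc_\np|\cdot|\cC_{\nn-\np}|$, and summing over $\np\in\mathcal{N}$ gives \eqref{eq:decomp_giant1}.

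For \eqref{eq:decomp_giant2} we use the same bijection and only need to check that it respects simplicity. The configuration $C$ is simple if and only if it has no self-loops and no multi-edges. Every edge of $C$ lies entirely inside $V$ or entirely inside its complement. Therefore $C$ is simple if and only if both $C_1$ and $C_2$ are simple, and restricting the bijection to simple configurations gives \eqref{eq:decomp_giant2}.

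The only point needing care is that a component is determined by its vertex set, so no pair $(C,\mathcal{K})$ is counted twice and there is no overcounting. The argument involves no real obstacle.
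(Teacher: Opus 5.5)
Your proposal is correct and follows essentially the same route as the paper. Both set up a bijection between pairs (configuration, distinguished component with type in $\mathcal{N}$) and triples consisting of the degree-wise label sets of the component ($\binom{\nn}{\np}$ choices), the order-preservingly relabelled component in $\cCc_\np$, and the relabelled remainder in $\cC_{\nn-\np}$. Your explicit check that simplicity splits across the two parts fills in the step the paper dismisses as analogous.
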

\begin{proof}
Consider the set 
\[\cA=\{(C,c)\colon C\in \cC_\nn,\; c\text{ a component in }C\text{ with type in }\mathcal{N}\},\] 
so that the right-hand side of (\ref{eq:decomp_giant1}) equals $|\cA|$. We will show that the left-hand side also equals $|\cA|$ by constructing a bijection $b$ from $\cA$ to the set
\[\bigcup_{\np\in\mathcal{N}}\left( \cCc_{\np}\times \cC_{\nn-\np} \times \prod_{k=1}^\infty \binom{[\sum_{j=1}^k n_j]\setminus [\sum_{j=1}^{k-1} n_j]}{m_k}\right) ,\]
where we let $\binom{A}{k}$ denote subsets of $A$ of size $k$.  Since $[m]=\{1,\dots,m\}$, 
\[\left[\sum_{j=1}^k n_j\right]\setminus \left[\sum_{j=1}^{k-1} n_j\right] =\left\{1+\sum_{j=1}^{k-1} n_j,2+\sum_{j=1}^{k-1} n_j,\dots, n_k+\sum_{j=1}^{k-1} n_j \right\} ,\]
which are the labels of vertices with degree $k$ in a configuration in $\cC_\nn$. 
The existence of such $b$ proves the statement, because we take a union over disjoint sets, and 
\[ \left| \prod_{k=1}^\infty \binom{[\sum_{j=1}^k n_j]\setminus [\sum_{j=1}^{k-1} n_j]}{m_k} \right| = \prod_{k=1}^\infty\binom{n_k}{m_k}=\binom{\nn}{\np}.\]

Consider $(C,c)$ in $\cA$ and suppose $c$ has type $\np$. Let the first coordinate of $b(C,c)$ be the graph obtained by assigning the vertices in $c$ labels in $[m]$ (respecting the ordering).  Let the second coordinate of $b(C,c)$ be the graph obtained by assigning the vertices in $C\setminus c$ labels in $[n-m]$ (respecting the ordering). For $k=1,2,\dots$, let the $(k+2)$nd coordinate of $b(C,c)$ be the labels of vertices with degree $k$ in $c$. 

To see that this is a bijection, note that we can recover $(C,c)$ from $b(C,c)$ as follows. Take the second coordinate of $b(C,c)$ and obtain $c$ by, for  $k=1,2, \dots$,  assigning the vertices with degree $k$ labels from the $(k+2)$nd coordinate of $b(C,c)$ (respecting the ordering). Now take the first coordinate of $b(C,c)$ and obtain $C\setminus c$ by, for  $k=1,2, \dots$,  assigning the vertices with degree $k$ labels from the set $[\sum_{j=1}^k n_j]\setminus [\sum_{j=1}^{k-1} n_j]$ that have not been used by $c$ (respecting the ordering). Then $C=c\cup (C\setminus c)$. 

The Proof of (\ref{eq:decomp_giant2}) is analogous, so we omit it. 
\end{proof}

We then combine Proposition~\ref{prop:typical_giant} and Lemma~\ref{lem:decomp_giant} to obtain the following corollary, that can be understood as the formal version of \eqref{approx_giant}, taking into account that the giant in a graph with type $\nbig$ has type close to $\nn$, rather than equal to $\nn$:
\begin{corollary}[Connected graphs as giants in larger graphs]
\label{cor:no_giant}
Under Conditions \ref{cond-convergence-d}(a)--(b) and \ref{cond-p}, as $n\rightarrow \infty$, 
\begin{align*}|\cC_{\nbig}|&=(1+o(1))\sum_{\np\in\nps}  \binom{\nbig}{\np}|\cCc_{\np}|\cdot|\cC_{\nbig-\np}|,\\
\intertext{and, if, additionally, Condition \ref{cond-convergence-d}(c) holds, then}
    |\cS_{\nbig}|&=(1+o(1))\sum_{\np\in\nps}  \binom{\nbig}{\np}|\cSc_{\np}|\cdot |\cS_{\nbig-\np}|.
\end{align*}
\end{corollary}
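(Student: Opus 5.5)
The plan is to apply Lemma~\ref{lem:decomp_giant} with $\nbig$ in the role of $\nn$ and $\mathcal{N}=\nps$, which is allowed since every $\np\in\nps$ satisfies $\np\le\nn$. We also need $\nn\le\nbig$, so that $\np\le\nbig$. This holds for $n$ large: by \eqref{eq:defn_nbig}, $N_i/n\to p_i^\veps/(1-\betaveps^i)$ (up to the normalising constants), which strictly exceeds $p_i$ for $i\le\tr$. If instead some $N_i<n_i$, terms with $m_i>N_i$ simply vanish, because the binomial is zero. The lemma then gives
\[
\sum_{\np\in\nps}\binom{\nbig}{\np}|\cCc_\np|\cdot|\cC_{\nbig-\np}| =\sum_{C\in\cC_\nbig}\#\{\text{components of }C\text{ with type in }\nps\},
\]
and the analogous identity with $\cS$ in place of $\cC$. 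It therefore suffices to show that the right-hand side equals $(1+o(1))|\cC_\nbig|$.

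Next I observe that each configuration has \emph{at most one} component with type in $\nps$. Such a component has total degree $\ell_\np>(1-\veps)\ell_\nn$. Since $\ell_\nbig\le(1+O(1))\ell_\nn/(1-\betaveps^2)$, two disjoint components would together need $2(1-\veps)\ell_\nn\le\ell_\nbig$. I would check that this fails for small $\veps$, i.e.\ that $\ell_\nbig<2(1-\veps)\ell_\nn$, which amounts to $\beta(\rq^\veps)^2<1/2$ roughly. This does not seem automatic, so I would not rely on it.

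The more robust route is to note that the largest component $\cmax{C}$ has type in $\nps$ on the event of Proposition~\ref{prop:typical_giant}. On the same event, from the known giant/no-second-giant results \cite[Theorem 4.9]{Remco2}, whp all other components have size $o(n)$ and hence cannot have total degree $>(1-\veps)\ell_\nn$. Thus the count equals $1$ on a set of configurations of size $(1+o(1))|\cC_\nbig|$. This gives the lower bound $\sum\ge(1+o(1))|\cC_\nbig|$.

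The main obstacle is the upper bound. The count could be larger than $1$ on the exceptional set, so I need a deterministic bound on the number of components with type in $\nps$. The number of vertices involved is at most $\sum_i N_i\le Cn$, and each such component uses at least $(1-\veps)\ell_\nn\ge(1-\veps)\mu_\rp n/2$ half-edges. Hence the count is at most a constant $L=\lfloor \ell_\nbig/((1-\veps)\ell_\nn)\rfloor=O(1)$. The exceptional set has size $o(|\cC_\nbig|)$, so its contribution is at most $L\cdot o(|\cC_\nbig|)=o(|\cC_\nbig|)$, which gives the upper bound. The simple case is identical: I would use the second statements of Proposition~\ref{prop:typical_giant} and of Lemma~\ref{lem:decomp_giant}, together with the transfer of the uniqueness of the giant to simple configurations via \cite[Corollary 7.17]{Remco1} under Condition~\ref{cond-convergence-d}(c).
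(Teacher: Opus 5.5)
Your proposal is correct and follows essentially the same route as the paper: apply Lemma~\ref{lem:decomp_giant} to $\nbig$, take the lower bound from Proposition~\ref{prop:typical_giant}, and for the upper bound combine a deterministic $O(1)$ bound on the number of components with type in $\nps$ with the fact, from \cite[Theorem 4.9]{Remco2}, that configurations with at least two such components form a set of size $o(|\cC_\nbig|)$. Two of your details are slightly more careful than the paper's. First, your half-edge bound $\ell_\nbig/((1-\veps)\ell_\nn)$ matches the definition of $\nps$ through $\ell_\np$, whereas the paper uses the vertex-count bound $N/((1-\veps)n)$. Second, your remark that terms with $\np\not\le\nbig$ vanish is genuinely needed rather than just a fallback, since $N_i=0$ for $i>\tr$, so $\nn\le\nbig$ fails in general.
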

\begin{proof}
    By Lemma~\ref{lem:decomp_giant},
    \[\sum_{\np\in\nps}  \binom{\nbig}{\np}|\cCc_{\np}|\cdot|\cC_{\nbig-\np}|=\sum_{C\in \cC_\nbig} \#\{\text{components in }C\text{ with type in }\nps\}.\]
    The expression on the right-hand side is bounded from above by 
     \[|\cC_\nbig|+\frac{N}{(1-\veps)n}\#\{C\in \cC_\nbig\text{ with }\ge 2\text{ components with type in }\nps\},\]
    and bounded from below by 
    $|\cC_{\nbig}^{\nps}|$. 
    By our choice of $\nbig$ and Proposition~\ref{prop:typical_giant}, the lower bound is $(1+o(1))\cC_\nbig$. Moreover, by \cite[Theorem 4.9]{Remco2}, \[\#\{C\in \cC_\nbig\text{ with }\ge 2\text{ components with type in }\nps\}=o(|\cC_\nbig|),\] so the first statement then follows from the fact that $\frac{N}{(1-\veps)n}$
    % $\frac{|\nbig|}{(1-\veps)|\nn|}$ 
    is bounded. 

    The second statement follows analogously, observing that under Condition \ref{cond-convergence-d}(c), the conclusion from  \cite[Theorem 4.9]{Remco2} also holds for simple configurations by \cite[Corollary 7.17]{Remco1}. 
\end{proof}

\smallskip
\paragraph{\bf Identifying the rate function $K(\rp)$} 
With Corollary~\ref{cor:no_giant} in hand, we need to study the exponential decay of 
    \[
    \frac{|\cC_\nbig|}{|\cC_\np|\cdot |\cC_{\nbig-\np}|\binom{\nbig}{\np}},
    \]
and show that it approximates $K(\rp)$ uniformly for all $\np$ in $\nps$. In the proof, we use the continuity of $K(\rp)$ that we state here, and prove in Section~\ref{sec-proof-continuity-large-deviations}:
\begin{proposition}[Continuity of $\rp\mapsto K(\rp)$]
\label{prop:K_cont}
Let Condition \ref{cond-p} hold. Then $\rp$ is a continuity point of $K$, for the topology generated by $d(\rp,\rq)=\sum_{k\ge 1}k|p_k-q_k|$ on probability distributions on $\N$. Further, for any $\eta \in \left(0,\frac{1}{2}\right)$, on the set $\{\rp\colon \sum_{k\geq 1} kp_k\geq 2/(1-2\eta)\},$ the map $\rp\mapsto K(\rp)$ is uniformly Lipschitz.
\end{proposition}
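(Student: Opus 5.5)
We will prove continuity and the Lipschitz property by treating $\beta(\rp)$ as an implicit function and then reading off $K$ from $\beta$. Write $\mu=\mu_{\rp}=\sum_k kp_k$ and
\[
F(\rp,b)=\sum_{k\ge1}kp_k\Big(\frac{1-b^2}{1-b^k}-1\Big),
\]
so that $\beta(\rp)$ is the root in $(0,1)$ of $F(\rp,\cdot)=0$.

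The $k=1$ term equals $p_1 b$. The $k=2$ term vanishes. For $k\ge3$ the term is $kp_k\frac{b^k-b^2}{1-b^k}\le 0$. Dividing by $b$, we set
\[
G(\rp,b)=F(\rp,b)/b=p_1-\sum_{k\ge3}kp_k\,\frac{b(1-b^{k-2})}{1-b^k}.
\]
Each function $h_k(b)=b(1-b^{k-2})/(1-b^k)$ is increasing in $b$, with $h_k(0)=0$ and $h_k(b)\to (k-2)/k$ as $b\to1$. Hence $G(\rp,\cdot)$ is strictly decreasing. It starts at $p_1>0$ and tends to $p_1-\sum_{k\ge3}(k-2)p_k=2-\mu<0$, the last inequality by Condition~\ref{cond-p}(b). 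This gives existence and uniqueness of $\beta(\rp)$. It also gives a quantitative derivative bound $\partial_b G\le -c(\rp)<0$ on compact subsets of $(0,1)$.

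\textbf{Step 1: localise $\beta$.} We show that $\beta(\rq)$ stays in a compact interval $[b_0,b_1]\subset(0,1)$ for all $\rq$ in a $d$-neighbourhood of $\rp$. Convergence in $d$ gives $q_1\to p_1$ and $\mu_{\rq}\to\mu_{\rp}$. Since $|h_k|\le1$, we get $|G(\rp,b)-G(\rq,b)|\le d(\rp,\rq)$ uniformly in $b$.
\begin{itemize}
\item For the lower end, $G(\rq,b)\ge q_1- b\,\mu_{\rq}$ (because $h_k\le b$), so $\beta(\rq)\ge q_1/\mu_{\rq}$.
\item For the upper end, we use that $G(\rp,b_1)<0$ for $b_1$ close to $1$, and the uniform bound then propagates this to nearby $\rq$.
\end{itemize}
For the Lipschitz claim on $\{\mu\ge 2/(1-2\eta)\}$, we need such bounds uniformly over the whole set. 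This is the main obstacle, because $p_1$ may be tiny there, pushing $\beta$ towards $0$. We handle it in two pieces.
\begin{itemize}
\item Near $b=1$: we show $G(\rp,b)\le -c(\eta)$ for $b\ge b_1(\eta)$, using $\mu\ge 2+\Theta(\eta)$ and that the mass of $(k-2)p_k$ is attained already at moderate $b$ up to a loss controlled by $\mu$.
\item Near $b=0$: we avoid a lower bound on $\beta$ altogether. Instead we estimate $\partial_b G$ directly. Its size is at least $\sum_{k\ge3}kp_kh_k'(b)$, which is bounded below when $\sum_{k\ge3}kp_k$ is bounded below, and the latter follows from $\mu\ge 2/(1-2\eta)$.
\end{itemize}

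\textbf{Step 2: Lipschitz bound on $\beta$.} By the implicit function theorem,
\[
|\beta(\rp)-\beta(\rq)|\le \frac{\sup_b|G(\rp,b)-G(\rq,b)|}{\inf|\partial_bG|}\le C\,d(\rp,\rq).
\]

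\textbf{Step 3: pass to $K$.} We write
\[
K(\rp)=\tfrac{\mu}{2}\log(1-\beta^2)-\sum_k p_k\log(1-\beta^k).
\]
Its partial derivative in $\beta$ vanishes at the fixed point, by \eqref{beta-implicit-function}: differentiating gives $-\mu\beta/(1-\beta^2)+\sum_k kp_k\beta^{k-1}/(1-\beta^k)$, which is zero exactly by $F=0$. So only the explicit dependence on $\rp$ matters at first order, and this is linear in $\rp$.
\begin{itemize}
\item The coefficient of $p_k$ is $\tfrac{k}{2}\log(1-\beta^2)-\log(1-\beta^k)$, which is bounded by $Ck$ for $\beta\le b_1$.
\item Hence $|K(\rp)-K(\rq)|\le C\,d(\rp,\rq)+(\text{second-order terms in }\beta(\rp)-\beta(\rq))$.
\item Alternatively, we bound the $\beta$-derivative of $K$ on $[0,b_1]$ by $C\mu$ and combine this with Step~2.
\end{itemize}
This yields continuity at $\rp$ and the uniform Lipschitz property. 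As $\beta\to0$ all terms vanish smoothly, so small $\beta$ causes no singularity.
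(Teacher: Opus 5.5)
Your route is the paper's: your $G$ is $-\mu_{\rp}$ times the paper's $F(\rp,\beta)$. Like the paper, you get a unique root from monotonicity, localise $\beta$ away from $1$, bound $|\beta(\rp)-\beta(\rq)|$ by the mean value theorem, and split $K(\rp)-K(\rq)$ into an explicit part and a $\beta$-part. Your normalisation has the small advantage that $|G(\rp,b)-G(\rq,b)|\le d(\rp,\rq)$ is immediate. The gap is in the \emph{uniform} Lipschitz claim.

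First, the two uniform estimates you call the main obstacle are only asserted:
\begin{itemize}
\item $G(\rp,b)\le -c(\eta)$ for $b\ge b_1(\eta)$;
\item $\inf_{k\ge 3}\inf_{b\le b_1}h_k'(b)>0$. Your phrase ``bounded below when $\sum_{k\ge3}kp_k$ is bounded below'' silently uses this. The paper proves the analogue by showing its derivative factor $f_k$ increases in $k$ once $k\eta>1$.
\end{itemize}
The first has a short proof you should supply. Since $j\mapsto 1-b^j$ is concave and vanishes at $0$, we get $(1-b^{k-2})/(1-b^k)\ge (k-2)/k$, i.e.\ $k h_k(b)\ge b(k-2)$. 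Hence $G(\rp,b)\le p_1(1-b)-b(\mu_{\rp}-2)$, and so $\beta(\rp)\le 1/(\mu_{\rp}-1)\le (1-2\eta)/(1+2\eta)$ on the set. This is sharper than the paper's bound $\beta(\rp)\le \tfrac12+1/\mu_{\rp}$, which comes from $\partial_\beta F\le 2$.

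Second, and more seriously, Step~3 as written gives a constant that grows with $\mu$, and $\mu$ is unbounded on $\{\rp\colon \mu_{\rp}\ge 2/(1-2\eta)\}$. Combining $|\partial_\beta K|\le C\mu$ with $|\beta(\rp)-\beta(\rq)|\le C\,d(\rp,\rq)$ gives $C\mu\, d(\rp,\rq)$. In your envelope version the remainder is of order $\mu\,|\beta(\rp)-\beta(\rq)|^2$, which is not uniformly $O(d(\rp,\rq))$ either. One fix is to keep the factor you dropped. Given the uniform bound on $h_k'$, we have $|\partial_b G|\ge c\sum_{k\ge 3}kp_k\ge c(\mu_{\rp}-2)$. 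So in fact $|\beta(\rp)-\beta(\rq)|\le C\,d(\rp,\rq)/\mu_{\rp}$, which cancels the $\mu$. In the paper's normalisation this factor is hidden in $\sum_k|p_k^\star-q_k^\star|\le 2d(\rp,\rq)/\mu_{\rp}$.

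A cleaner fix is to finish your envelope observation. A direct computation gives $\partial_\beta K(\rp,\beta)=G(\rp,\beta)/(1-\beta^2)$. So $K(\rp,\cdot)$ increases up to $\beta(\rp)$ and decreases afterwards, i.e.\ $K(\rp)=\max_{\beta\in[0,1)}K(\rp,\beta)$. Moreover, $K(\cdot,\beta)$ is linear in $\rp$ with $k$-th coefficient $\frac k2\log(1-\beta^2)-\log(1-\beta^k)$. For $\beta\le b_1$ its absolute value is at most $C(b_1)\,k$, where $C(b_1)=\tfrac12|\log(1-b_1^2)|+|\log(1-b_1)|$. The sandwich
\[ K(\rp,\beta(\rq))-K(\rq,\beta(\rq))\le K(\rp)-K(\rq)\le K(\rp,\beta(\rp))-K(\rq,\beta(\rp)) \]
then gives $|K(\rp)-K(\rq)|\le C(b_1)\,d(\rp,\rq)$ for all $\rp,\rq$ in the set.

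This needs only the uniform localisation $\beta\le b_1(\eta)$. It therefore bypasses the Lipschitz bound on $\beta$, the uniform lower bound on $h_k'$, and all second-order terms, and is genuinely shorter than the paper's argument. Continuity at any $\rp$ satisfying Condition~\ref{cond-p} follows by picking $\eta$ with $\mu_{\rp}>2/(1-2\eta)$, since $|\mu_{\rp}-\mu_{\rq}|\le d(\rp,\rq)$.
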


We first state an elementary lemma on binomial coefficients:

\begin{lemma}[Bounds on binomial coefficients]
\label{lem:binoms_bound}
For $a,a,b,b'\in \N$, 

\begin{align*}\binom{a}{|b-b'|}^{-1}\le \frac{\binom{a}{b} }{\binom{a}{b'} }\le \binom{a}{|b-b'|}\qquad \text{and}\qquad
\binom{a}{b}\binom{a'}{b'}\le \binom{a+a'}{b+b'}
\end{align*}
\end{lemma}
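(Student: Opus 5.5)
Both inequalities are elementary and I will prove them with double-counting and injection arguments, not asymptotics. Throughout, $a,a',b,b'$ are non-negative integers, with the convention $\binom{x}{y}=0$ when $y>x$. The ratio bound is meant to be applied when both $\binom{a}{b}$ and $\binom{a}{b'}$ are positive, i.e.\ $b,b'\le a$.

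\textbf{The second inequality} (Vandermonde). Choosing a $b$-subset of $[a]$ and a $b'$-subset of $\{a+1,\dots,a+a'\}$ gives a $(b+b')$-subset of $[a+a']$. This map is injective, since the subset determines its intersections with the two blocks. Hence $\binom{a}{b}\binom{a'}{b'}\le\binom{a+a'}{b+b'}$. Equivalently, it is one term of the identity $\binom{a+a'}{b+b'}=\sum_j\binom{a}{j}\binom{a'}{b+b'-j}$.

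\textbf{The first inequality.} By symmetry in $b$ and $b'$, the lower bound follows from the upper bound with the roles swapped, so it suffices to show $\binom{a}{b}\le\binom{a}{b'}\binom{a}{|b-b'|}$.

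\emph{Case $b\ge b'$.} Set $d=b-b'$. Every $b$-subset of $[a]$ is the union of some $b'$-subset and some $d$-subset, namely any split of its elements. So the union map from pairs $(B',D)$ onto $b$-subsets is surjective, which gives $\binom{a}{b}\le\binom{a}{b'}\binom{a}{d}$.

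\emph{Case $b<b'$.} Set $d=b'-b$. Write $\binom{a}{b}=\binom{a}{a-b}$ and $\binom{a}{b'}=\binom{a}{a-b'}$. Now $a-b\ge a-b'$ and $(a-b)-(a-b')=d$, so the previous case applies to complements and gives $\binom{a}{a-b}\le\binom{a}{a-b'}\binom{a}{d}$.

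Alternatively, in both cases one can use the product formula $\binom{a}{b}/\binom{a}{b'}$, a product of $d$ factors each at most $a$ and compared against $\binom{a}{d}$. The subset-covering argument is cleaner, though, and avoids the $d!$ loss.

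\textbf{Main obstacle.} There is none of substance. The only care needed is the case split on the sign of $b-b'$ and the complement trick. Note that the surjectivity argument alone would not handle $b<b'$, since a $b$-subset cannot be built as a union from larger sets.
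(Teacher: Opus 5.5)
Your proof is correct and follows essentially the same route as the paper. The surjectivity of the union map for $b\ge b'$ is the paper's double count $\binom{a}{b}\binom{b}{b'}=\binom{a}{b'}\binom{a-b'}{b-b'}$ compressed into an inequality, and your complement trick for $b<b'$, role swap for the lower bound, and injection for the second inequality all match the paper; your explicit restriction to $b,b'\le a$ is a worthwhile addition, since the stated lower bound fails otherwise (e.g.\ $a=2$, $b=3$, $b'=2$).
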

\begin{proof}
    
We start with the first statement. If $b\ge b'$, then 
\[ \binom{a}{b}\binom{b}{b'}=\binom{a}{b'}\binom{a-b'}{b-b'},\]
since, for $|A|=a$, both sides enumerate
\[ \{B', B\colon B'\subseteq B \subseteq A: |B'|=b', |B|=b\}.\]

Rearranging and using that $\binom{a-b'}{b-b'}\le \binom{a}{b-b'}$ and that $\binom{b}{b'}\ge 1$ yields the upper bound in the lemma. 

The upper bound for $b'\ge b$ follows from 
\[\frac{\binom{a}{b} }{\binom{a}{b'} }=\frac{\binom{a}{a-b} }{\binom{a}{a-b'} },\] and applying the upper bound for $b\ge b'$, since $a-b\ge a-b'$. 

Then, the lower bound follows from applying the upper bound after exchanging the roles of $b$ and $b'$. 

For the second statement, observe that for $|A|=a$ and $|A'|=a'$ with $A\cap A'=\emptyset$, 
\[ \{B\cup B'\colon B\subseteq A, B' \subseteq A': |B|=b, |B'|=b'\}\subseteq \{B \colon B\subseteq A \cup A': |B|=b+b' \},\]
which are enumerated by the left-hand side and right-hand side respectively.
\end{proof}
We now identify $K(\rp)$ as the rate function of configurations and simple graphs being connected, generalizing \eqref{eq:find_K(p)} to $\veps>0$:
\begin{proposition}[$K(\rp)$ is rate of connectivity]
\label{prop:Kp}
     When $\nn$ satisfies Conditions \ref{cond-convergence-d} (a)--(b), for all $\np\in \nps$, as $n\rightarrow\infty$,
    \begin{align*}\frac{|\cC_\nbig|}{|\cC_\np |\cdot|\cC_{\nbig-\np}|\binom{\nbig}{\np}}&=\e^{-K(\rp)n +\delta(\np) n}\qquad\text{ and}\\
    \frac{|\cS_\nbig|}{|\cS_\np |\cdot |\cS_{\nbig-\np}|\binom{\nbig}{\np}}&=\e^{-K(\rp)n +\delta(\np)n},\end{align*}
    for a function $\delta(\np)$ that tends to $0$ uniformly over all $\np\in \nps$ as $\veps\searrow 0$.
    
    \end{proposition}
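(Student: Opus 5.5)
The plan is to repeat the heuristic computation leading to \eqref{eq:find_K(p)}, but now exactly for the actual $\np\in\nps$ and the specific $\nbig$ of \eqref{eq:defn_nbig}. At the end we absorb all discrepancies into $\delta(\np)$ using Lemma~\ref{lem:binoms_bound} and Proposition~\ref{prop:K_cont}. The simple-graph statement will then be reduced to the configuration statement via Proposition~\ref{prop:simple_prob}.

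\textbf{Step 1: reduce to a reference type sequence.} Introduce the reference type sequence $\nn^\veps$ with $n^\veps_i=\lfloor \gamma\rho q^\veps_i(1-\betaveps^i) n\rfloor = \lfloor \rho p^\veps_i n\rfloor\approx (1-\tfrac{\veps}{i2^{i+2}})p_i n$ for $i\le\tr$. This is essentially the typical giant type for $\nbig$. For $\np=\nn^\veps$, the computation \eqref{ell-N-approx}--\eqref{eq:find_K(p)} applies verbatim with $\rp$ replaced by $\rp^\veps$: the identities $a=1-\betaveps^2+o(1)$ and $a_i=1-\betaveps^i+o(1)$ hold by construction, and the $\log\beta$ terms cancel. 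This gives the rate $\exp[-K(\rp^\veps)\rho n+o(n)]$. Since $\rho\in(1-\veps/4,1)$ and $d(\rp^\veps,\rp)\to0$ as $\veps\searrow0$ (using $\sum_{k>\tr}kp_k<\veps/2$), Proposition~\ref{prop:K_cont} gives $|\rho K(\rp^\veps)-K(\rp)|\le\delta_1(\veps)$. One caveat: only finitely many $i\le\tr$ occur, so Stirling's formula \eqref{binom-a-n-approx} holds uniformly, apart from classes where $p_i=0$, which contribute trivially.

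\textbf{Step 2: compare a general $\np\in\nps$ with $\nn^\veps$.} Both $\np$ and $\nn^\veps$ lie below $\nn$ and have total degree at least $(1-\veps)\ell_\nn$, so $\sum_i i|m_i-n^\veps_i|\le C\veps n$ (up to $o(n)$). Bound each factor's change separately.
\begin{itemize}
\item For $\binom{\nbig}{\np}$ versus $\binom{\nbig}{\nn^\veps}$, apply the first part of Lemma~\ref{lem:binoms_bound} coordinatewise. This gives a factor at most $\prod_i\binom{N_i}{|m_i-n^\veps_i|}\le \binom{N}{C\veps n}=\e^{O(\veps\log(1/\veps))n}$, using the second part of the lemma to merge the product.
\item For $|\cC_\np|\,|\cC_{\nbig-\np}|=(\ell_\np-1)!!(\ell_{\nbig}-\ell_\np-1)!!$, this depends only on $\ell_\np$. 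Writing it as $(\ell_\nbig-1)!!\cdot\binom{\ell_\nbig}{\ell_\np}^{-1}\binom{\ell_\nbig/2}{\ell_\np/2}$, Lemma~\ref{lem:binoms_bound} again shows that changing $\ell_\np$ by $O(\veps n)$ costs $\e^{O(\veps\log(1/\veps))n}$.
\end{itemize}
The total error $\delta(\np)\le\delta_1(\veps)+C\veps\log(1/\veps)+o(1)$ is uniform over $\nps$. Points where $N_i$ is small relative to $m_i$ need no special care, because $m_i\le n_i$ and $N_i\ge n^\veps_i$ up to rounding. One must also check that $m_i\le N_i$: since $\gamma\rho q^\veps_i\ge \rho p^\veps_i/(1-\beta^i)$, this holds for $\veps$ small and $\beta$ bounded away from $0$.

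\textbf{Step 3: simple graphs.} For the second display, write $|\cS_\mm|=\e^{o(n)}|\cC_\mm|$ for $\mm\in\{\nbig,\np,\nbig-\np\}$ via Proposition~\ref{prop:simple_prob}. All three have bounded maximal degree or degrees dominated by $\nn$, and each has a linear number of degree-one vertices. For $\nbig-\np$ this holds because $N_1-m_1\ge N_1-n_1\approx \beta p_1 n/(1-\beta)>0$ by Condition~\ref{cond-p}(a). The main obstacle is uniformity: Proposition~\ref{prop:simple_prob} is stated for a single converging sequence, but here $\np$ ranges over exponentially many sequences. I would handle this by rerunning its proof with a uniform lower bound. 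Since $\np\le\nn$, all degrees above $\tr'$ have total mass at most $\sum_{k>\tr'}kn_k$, uniformly. For the truncated part I would use a uniform version of the bounded-degree simplicity estimate $|\cS|\ge c|\cC|$, valid for every type sequence with maximal degree $\tr'$ and linear size. Alternatively, a subsequence/compactness argument over $\np/n$ can be used, since $\nps/n$ lies in a compact set of bounded-degree profiles after truncation.
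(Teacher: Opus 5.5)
This is essentially the paper's proof. The paper likewise compares $\prod_{i\le\tr}\binom{N_i}{m_i}$ with the reference $\prod_{i\le\tr}\binom{N_i}{\lfloor n\rho p_i^\veps\rfloor}$ via Lemma~\ref{lem:binoms_bound} and a single $\binom{N}{3\veps n}$, and evaluates the reference by Stirling so that the $\log\beta$ terms cancel to $\e^{-n\rho K(\rp^\veps)}$. It then concludes with Proposition~\ref{prop:K_cont} and gets the simple case from Proposition~\ref{prop:simple_prob}. Your extra care about uniformity of Proposition~\ref{prop:simple_prob} over $\np\in\nps$ is a genuine improvement on the paper's one-line reduction, and so is your use of Lemma~\ref{lem:binoms_bound} for the configuration factor instead of continuity of the entropy in $a$.

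One correction: your check of $m_i\le N_i$ is wrong in general. For $i>\tr$ we have $N_i=0$, while $\nps$ allows $m_i>0$ (for example $\np=\nn$). For $i\le\tr$, the check needs $\beta^i>\veps/(i2^{i+2})$, which fails for $i$ near $\tr(\veps)$ whenever $\beta<1/2$ and $\tr(\veps)\gg\log(1/\veps)$. So the statement must be read for $\np\in\nps$ with $\np\le\nbig$; the paper's proof tacitly assumes the same when it writes $\binom{\nbig}{\np}=\prod_{i\le\tr}\binom{N_i}{m_i}$.
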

\begin{proof}
We only prove the first statement; Proposition~\ref{prop:simple_prob} implies the second statement.

Let $\np\in \nps$. 
 Throughout the proof, we repeatedly use the properties from \eqref{p-veps-def}--\eqref{eq:defn_nbig}, i.e.,
\[
q_i^\veps  = \frac{p_i^\veps}{\gamma (1-\beta^i)}; \qquad
N_i = \lfloor \gamma \rho q_i^\veps n \rfloor ; \qquad \sum_{i = 1}^\tr \frac{i p_i^\veps}{(1-\beta^i)}=\frac{\mu_{\rp^\veps}}{1-\beta^2}.
\]
Set
\[
\mu_\np  = \ell_\np/n; \qquad \mu_\nn=\ell_\nn/n; \qquad \mu_\nbig=\ell_\nbig/n.
\]
We will use $\delta(\mm)$ to refer to a generic function that tends to $0$ uniformly over all $\mm\in \nps$ as $\veps\searrow0$, and may vary from line to line. 

We separately study the terms 
\[
\frac{|\cC_\nbig|}{|\cC_\np| \cdot |\cC_{\nbig-\np}|}\quad\text{and}\quad \binom{\nbig}{\np}.
\]
Recall \eqref{binom-a-n-approx}--\eqref{ell-N-approx}. 
We observe that, for $n$ large enough,
\[|\mu_\np-\rho\mu_{\rp^\veps}|\le |\mu_\mm-\mu_\nn|+|\mu_\nn-\mu_{\rp}|+|\mu_{\rp}-\rho\mu_{\rp^\veps}|<3\veps,\]
where the bounds on the three terms follow from the fact that $\np\in \nps$, Condition~\ref{cond-convergence-d}(b) and the definition of $\rp^\veps$, respectively. Note that $a=1-\beta^2+\delta(\np)$ since $\mu_\nbig=\rho\mu_{\rp^\veps}/(1-\beta^2)+o(1)$,
so that
\eqn{\label{eq:term_configs}
\frac{|\cC_\nbig|}{|\cC_\np|\cdot |\cC_{\nbig-\np}|}=\exp\left[-n\rho\mu_{\rp^\veps}\left(\tfrac{1}{2}\log(1-\beta^2)+\tfrac{\beta^2}{1-\beta^2}\log(\beta)+\delta(\np)\right)\right].
}

We now move on to the factor $\binom{\nbig}{\np}$. By definition,
\begin{align*}
\binom{\nbig}{\np} &=\prod_{i=1}^\tr\binom{N_i}{m_i}.
\end{align*}
We want to compare this to $\prod_{i=1}^\tr \binom{N_i}{\lfloor n\rho p_i^\veps\rfloor}$. Lemma~\ref{lem:binoms_bound} yields that 
\[ \prod_{i=1}^\tr \binom{N_i}{|m_i-\lfloor n\rho p_i^\veps\rfloor|}^{-1}\le \frac{\prod_{i=1}^\tr\binom{N_i}{m_i} }{\prod_{i=1}^\tr \binom{N_i}{\lfloor n\rho p_i^\veps\rfloor}}\le \prod_{i=1}^\tr \binom{N_i}{|m_i-\lfloor n\rho p_i^\veps\rfloor|}.\]
Then, again by Lemma~\ref{lem:binoms_bound},
\[\prod_{i=1}^\tr \binom{N_i}{|m_i-\lfloor n\rho p_i^\veps\rfloor|}\le  \binom{\sum_{i=1}^\tr N_i}{\sum_{i=1}^\tr |m_i-\lfloor n\rho p_i^\veps\rfloor|}.\]
Now observe that, for all $n$ large enough,
\[\sum_{i=1}^\tr |m_i-\lfloor n\rho p_i^\veps\rfloor|\le \sum_{i=1}^\tr (|m_i-n_i|+|n_i-p_i n|+n|p_i-\rho p_i^\veps|)\le 3\veps n,\]
where we bound the three terms  using the fact that $\np\in\nps$, Condition~\ref{cond-convergence-d}(a) and the definition of $\rp^\veps$, respectively. Thus,
\[\prod_{i=1}^\tr \binom{N_i}{|m_i-\lfloor n\rho p_i^\veps\rfloor|}\le \binom{N}{3\veps n}=\e^{-(a\log{a}+(1-a)\log(1-a))N +o(N)}\]
for $a=3\veps n/N=3\veps /(\gamma\rho)+o(1)$. We see that by taking $\veps$ small, $a\log{a}+(1-a)\log(1-a)$ can be made arbitrarily close to $0$, so 
\eqn{\label{eq:term_approx_mi}\prod_{i= 1}^\tr \binom{N_i}{\lfloor n\rho p_i^\veps\rfloor}=\e^{\delta(\np)n}\prod_{i=1}^\tr \binom{N_i}{m_i},}
for some $\delta(\np)$ that tends to $0$ uniformly over all $\np\in \nps$ as $\veps\searrow 0$. We conclude that we may study the left-hand side of \eqref{eq:term_approx_mi}, instead. For this, we see that 
\[\prod_{i= 1}^\tr \binom{N_i}{\lfloor n\rho p_i^\veps\rfloor}=\prod_{i=1}^\tr \e^{ -(a_i \log {a_i}+(1-a_i)\log(1-a_i))N_i+o(N_i)},\]
for $a_i=\frac{n\rho p_i^\veps}{N_i}=1-\beta^i+o(1)$. Thus, 
\begin{align}\prod_{i= 1}^\tr \binom{N_i}{\lfloor n\rho p_i^\veps\rfloor}&=\e^{o(n)}\prod_{i=1}^\tr \exp\left[ -n\rho p_i^\veps \left(\log(1-\beta^i)+\tfrac{i\beta^i}{1-\beta^i}\log(\beta)\right) \right]\nonumber\\
&= \e^{o(n)}\exp\left[- n\rho \left(\sum_{i=1}^\tr p_i^\veps \log(1-\beta^i)+ \log(\beta)\sum_{i=1}^\tr i p_i^\veps \frac{\beta^i}{1-\beta^i}  \right)\right].\label{eq:term_binomial_coeff}\end{align}
Finally, we see that 
\[\sum_{i=1}^\tr i p_i^\veps \frac{\beta^i}{1-\beta^i}=\sum_{i=1}^\tr i p_i^\veps-\sum_{i=1}^\tr \frac{i p_i^\veps}{1-\beta^i}=\mu_{\rp^\veps}\left(1-\frac{1}{1-\beta^2}\right)= \frac{\mu_{\rp^\veps}\beta^2}{1-\beta^2},\]
so, combining \eqref{eq:term_configs}, \eqref{eq:term_approx_mi} and  \eqref{eq:term_binomial_coeff}, we see that the term involving $\log{(\beta)}$ cancels in the exponent. Therefore, 
\begin{align*}\frac{|\cC_\nbig|}{|\cC_\np |\cdot|\cC_{\nbig-\np}|}&=\exp\Big[n\rho\sum_{k\geq 1} p^\veps_k\log(1-\beta^k)-\frac{n\rho\mu_{\rp^\veps}}{2}\log{(1-\beta^2)}+n\delta(\np)\Big]\\
&=\e^{-n\rho K(\rp^\veps)+n\delta(\np)}.
\end{align*}

We recall that $1-\veps<\rho <1$. We claim that we can use the continuity of $K$ at $\rp$ from Proposition~\ref{prop:K_cont} to argue that $K(\rp^\veps)\to K(\rp)$ as $\veps\searrow 0$. Indeed, 
\[\sum_{k\ge 1}k|p_k-p_k^\veps|=\sum_{k>\tr}kp_k+\sum_{k=1}^\tr kp_k \left|1-\tfrac{1}{\rho}+\tfrac{\veps}{\rho i2^{i+2}}\right|,\]
and we see that both terms go to $0$ as $\veps\searrow 0$ by our choice for $\tr$ and the fact that $1-\veps<\rho <1$. This proves the first statement in the proposition. 
\end{proof}

Now we are ready to complete the proof of Proposition~\ref{prop:sum_ratios}:
\medskip

\paragraph{\bf Proof of Proposition~\ref{prop:sum_ratios}} 
Combining our results, we see that
   \begin{align*}|\cC_{\nbig}|
   &\overset{\text{Cor~\ref{cor:no_giant}}}{=}(1+o(1))\sum_{\np\in\nps}  \binom{\nbig}{\np}|\cCc_{\np}|\cdot|\cC_{\nbig-\np}|\\
   &= (1+o(1)) |\cC_{\nbig}|\sum_{\np\in \nps} \frac{|\cCc_{\np}|}{|\cC_\np|}\frac{|\cC_\np|\cdot |\cC_{\nbig-\np}|\binom{\nbig}{\np}}{|\cC_\nbig|}\\
   &\overset{\text{Prop~\ref{prop:Kp}}}{=}\e^{K(\rp)n +\delta(\veps)n}|\cC_{\nbig}|\sum_{\np\in \nps} \frac{|\cCc_{\np}|}{|\cC_\np|}
   \end{align*}
   for some function $\delta(\veps)$ that tends to $0$ as $\veps\searrow 0$. Rearranging yields the statement for configurations. Since under the additional Condition~\ref{cond-convergence-d}(c), all used results hold for simple configurations as well, the statement for simple configuration follows analogously.  \qed

\section{Proof: Getting the degree sequence exactly right}
\label{sec-proof-switchings}
In this section, we build on Proposition~\ref{prop:sum_ratios} to prove the following theorem:

\begin{theorem}[Switchings to get the exact type sequence]
\label{thm:config}
As $n\rightarrow \infty,$
\[|\cCc_\nn|=\e^{-K(\rp)n+o(n)}|\cC_\nn| ,\]
and, if, additionally, Condition \ref{cond-convergence-d}(c) holds, then 
\[|\cSc_\nn|=\e^{-K(\rp)n+o(n)}|\cC_\nn|.\]
\end{theorem}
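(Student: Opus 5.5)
We will deduce Theorem~\ref{thm:config} from Proposition~\ref{prop:sum_ratios} with a switching argument that compares connected configurations with type $\np\in\nps$ to connected configurations with type exactly $\nn$. The approach consists of an upper bound and a lower bound, and both reduce to the same injection-type estimate.

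\emph{Upper bound.} Since $\nn\in\nps$ for every $\veps>0$, Proposition~\ref{prop:sum_ratios} gives directly
\[
|\cCc_\nn|/|\cC_\nn|\le \sum_{\np\in\nps}|\cCc_\np|/|\cC_\np| = \e^{-K(\rp)n+\delta(\veps)n}.
\]
Letting $\veps\searrow0$ yields $|\cCc_\nn|\le \e^{-K(\rp)n+o(n)}|\cC_\nn|$. For simple configurations under Condition~\ref{cond-convergence-d}(c), the same argument gives $|\cSc_\nn|\le \e^{-K(\rp)n+o(n)}|\cS_\nn|\le \e^{-K(\rp)n+o(n)}|\cC_\nn|$.

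\emph{Lower bound.} The number of type sequences in $\nps$ is $\e^{o(n)}$: because of the truncation at $\tr$, essentially only boundedly many coordinates vary, each by at most $n$ (for unbounded degrees one first truncates, with a $\delta(\veps)$ cost). Hence there is some $\np\in\nps$ with $|\cCc_\np|/|\cC_\np|\ge \e^{-K(\rp)n-\delta(\veps)n}$. We then build $\cCc_\nn$ from $\cCc_\np$ by adding the missing vertices. The deficit is $\nn-\np$, with $\ell_\nn-\ell_\np\le\veps\ell_\nn$. To add a vertex of degree $k$ while keeping connectivity, we choose $k/2$ edges of the connected configuration (for even $k$), cut each of them, and attach the resulting $k$ half-edges to the new vertex. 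For odd $k$, we pair additions with a degree-one vertex, or combine the operation with removing or adding leaves, using $p_1>0$ so that enough degree-one vertices are available to fix parity. Cutting an edge and reconnecting both of its ends to the new vertex preserves connectivity. Counting the images, each step multiplies the count by roughly $(\ell/2)^{k/2}$ choices, while the map is at most $\e^{\delta(\veps) n}$-to-one, since one can recover the operation by knowing which vertices were inserted (a factor $\binom{\nn}{\np}=\e^{\delta(\veps)n}$) and the order of insertions. This gives
\[
|\cCc_\nn|\ge \e^{-\delta(\veps)n}\,|\cCc_\np|\prod(\text{edge choices}).
\]
A Stirling comparison shows that $|\cC_\nn|/|\cC_\np|=(\ell_\nn-1)!!/(\ell_\np-1)!!$ is matched by these edge-choice factors up to $\e^{\delta(\veps)n}$. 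The labelling factor $\prod k!^{\,\cdot}$ does not enter at the level of configurations, since we count configurations with labelled half-edges, so it contributes at most $\e^{\delta(\veps)n}$ as well. Letting $\veps\searrow0$ gives the lower bound.

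For simple configurations, we must ensure the inserted vertex does not create multi-edges or loops. To do so we choose the $k/2$ cut edges to have distinct endpoints, none of which coincide. Under Condition~\ref{cond-convergence-d}(c) the fraction of bad choices is $O(1/n)$ per step, so the counting loses only constant factors per step. Together with the simple-configuration part of Proposition~\ref{prop:sum_ratios}, this gives the second statement.

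The main obstacle is the switching count: bounding the multiplicity of the insertion map, and handling the high-degree vertices that are missing (those with degree above $\tr$, with total degree at most $\veps n$). Attaching a vertex of huge degree $k$ costs a factor like $\ell^{k/2}$ compared with $(\ell_\nn-1)!!/(\ell_\np-1)!!$ only up to $\e^{O(k\log(\cdot))}$ errors. I would handle this by tracking the ratio $\ell^{k/2}/(\ell^{k/2}\cdot\text{const}^k)$ and using that the total missing degree is at most $\veps n$, so the total error is $\e^{O(\veps\log(1/\veps))n}$.
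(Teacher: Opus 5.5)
Your upper bound is correct: it is the trivial direction, since $\nn\in\nps$. The pigeonhole step also works, though not for the reason you give, since $\nps$ is not restricted to truncated sequences. The right count is $|\{\mm\le\nn\}|\le\prod_i(n_i+1)\le\exp\bigl(M\log(n+1)+\ell_\nn/M\bigr)$ for every $M$, which is $\e^{o(n)}$.

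The gap is in the insertion step. Because $\mm$ comes from pigeonhole, you have no control over the deficit $\nn-\mm$. Nothing prevents it from consisting entirely of degree-one vertices; heuristically this is even the relevant case, since deleting leaves is what makes connectivity likely. Your operations cannot insert leaves:
\begin{itemize}
\item cutting $k/2$ edges requires $k$ even;
\item pairing a new leaf with another new leaf creates an isolated edge;
\item ``using $p_1>0$'' is irrelevant, because you must insert exactly the vertices of $\nn-\mm$;
\item removing existing leaves changes their neighbours' degrees, hence the type.
\end{itemize}
This is not cosmetic. Inserting two leaves with all other degrees fixed adds two vertices but one edge, so it lowers the surplus $|E|-|V|+1$ by one. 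If $\mm$ were tree-like, adding two leaves would give a type with no connected configurations at all. Any correct insertion scheme must therefore consume cycle edges, i.e.\ use $\mu_{\rp}>2$, and your switching step never does.

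The fix is to insert leaves in pairs. Cut an edge $\{h_x,h_y\}$ lying outside a spanning tree of the current configuration, and pair $h_x$ and $h_y$ with the two new leaves. This needs linearly many such edges at every stage, which holds for the following reasons:
\begin{itemize}
\item the initial surplus is at least $\ell_\mm/2-m\ge\bigl((1-\veps)\mu_{\rp}/2-1-o(1)\bigr)n$;
\item the final surplus is $(\mu_{\rp}/2-1+o(1))n$;
\item if the leaf-pair insertions are done last, no earlier insertion decreases the surplus (even degree $k\ge2$; odd $k\ge3$ together with a deficit leaf; two odd vertices of degree $\ge3$ joined by an edge).
\end{itemize}
Each leaf pair then has at least $cn$ choices against a target factor of order $\ell_\nn$. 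That is a constant loss per pair, and $\e^{-O(\veps)n}$ in total.

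You must also insert in a canonical order (e.g.\ by label) and undo in reverse. If the order is part of what has to be recovered, it costs up to $(\veps n)!$, which is not $\e^{\delta(\veps)n}$.

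With these repairs your route is a valid alternative, but it carries bookkeeping the paper avoids entirely. The paper never inserts vertices; it stays inside $\cC_\nn$. It rewrites $\sum_{\mm\in\nps}|\cCc_\mm|/|\cC_\mm|$ as $\e^{\delta(\veps)n}|\cC_\nn^{\nps}|/|\cC_\nn|$ via Corollary~\ref{cor:decomp_giant} and Lemma~\ref{lem:compare_n_np}. It then merges the small components into the near-spanning one by breaking $i$ surplus edges of the giant and one edge in each small component (Proposition~\ref{prop:almost_connected}). The missing vertices already sit in the small components with the correct degrees, so parity and the leaf problem never arise. The condition $\mu_{\rp}>2$ enters there through the surplus bound, which is exactly the ingredient your argument lacks.
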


We use the following direct consequence of Lemma~\ref{lem:decomp_giant}:
\begin{corollary}[Connected graphs in larger graphs (cont.)]
\label{cor:decomp_giant}
If $m>n/2$, then 
\begin{align*} |\cC_\nn^\np|&=\binom{\nn}{\np} |\cCc_\np|\cdot| \cC_{\nn-\np}|,\qquad\text{and}\\
|\cS_\nn^\np|&=\binom{\nn}{\np} |\cSc_\np|\cdot| \cS_{\nn-\np}|.
\end{align*}
\end{corollary}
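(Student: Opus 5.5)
The plan is to apply Lemma~\ref{lem:decomp_giant} with the singleton family $\mathcal{N}=\{\np\}$ and then observe that, when $m>n/2$, each configuration has \emph{at most one} component of type $\np$. With that observation the right-hand side of \eqref{eq:decomp_giant1} becomes exactly $|\cC_\nn^\np|$.

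\textbf{Step 1.} Since $\np\le\nn$ is implicit (the binomial $\binom{\nn}{\np}$ only makes sense then), Lemma~\ref{lem:decomp_giant} with $\mathcal{N}=\{\np\}$ gives
\[
\binom{\nn}{\np}|\cCc_\np|\cdot|\cC_{\nn-\np}|=\sum_{C\in\cC_\nn}\#\{\text{components in }C\text{ with type }\np\}.
\]

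\textbf{Step 2.} A component of type $\np$ contains $m=\sum_i m_i$ vertices. Two distinct components are vertex-disjoint, so two components of type $\np$ would together contain $2m>n$ vertices. This is impossible in a configuration on $n$ vertices. Hence the summand on the right is the indicator that $C$ has a component of type $\np$. Summing this indicator over $C\in\cC_\nn$ gives $|\cC_\nn^\np|$ by definition, which proves the first identity.

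\textbf{Step 3.} The identity for simple configurations follows in the same way from \eqref{eq:decomp_giant2}, using the same vertex-counting argument restricted to $S\in\cS_\nn$.

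There is no real obstacle here. The only point to check is that "component with type $\np$" counts components, so that uniqueness upgrades the count to an indicator; this is exactly what $m>n/2$ guarantees.
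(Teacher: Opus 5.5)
Your proposal is correct and matches what the paper intends. The paper gives no separate proof and calls the corollary a direct consequence of Lemma~\ref{lem:decomp_giant}. Taking $\mathcal{N}=\{\np\}$ and using $m>n/2$, there is at most one component of type $\np$, so the component count becomes an indicator. The same argument covers both the configuration and the simple-configuration identities.
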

Corollary \ref{cor:decomp_giant} implies that if $\veps>0$, then 
\eqn{
\label{cor-4.2-implication}
\e^{-K(\rp)n +\delta(\veps)n}\overset{\text{Prop~\ref{prop:sum_ratios}}}{=}\sum_{\np\in \nps} \frac{|\cCc_{\np}|}{|\cC_\np|}\overset{\text{Cor. \ref{cor:decomp_giant}}}{=}\sum_{\np\in \nps}  \frac{|\cC_\nn^\np|}{\binom{\nn}{\np} |\cC_\np | \cdot|\cC_{\nn-\np}|},}
and analogously for simple configurations.

The following lemma allows us to replace each denominator on the right-hand side of \eqref{cor-4.2-implication} by $|\cC_\nn|$ (respectively, $|\cS_\nn|$). Then, in Proposition~\ref{prop:almost_connected}, we show that $\sum_{\np\in \nps} |\cC_\nn^\np|$ is close to $|\cCc_\nn|$ (and similarly for simple configurations), which implies Theorem \ref{thm:config}. 

\begin{lemma}[Subexponential error in replacing $\mm$ by $\nn$]
\label{lem:compare_n_np}
For all $\np\in \nps$,
\begin{align*}\frac{|\cC_\nn|}{|\cC_\np| \cdot |\cC_{\nn-\np}|\binom{\nn}{\np}}&=\e^{\delta(\np)n}\qquad\text{and}\qquad 
\frac{|\cS_\nn|}{|\cS_\np|\cdot |\cS_{\nn-\np}|\binom{\nn}{\np}}=\e^{\delta(\np)n }
\end{align*}
for a function $\delta(\np)$ that tends to $0$ uniformly over all $\np\in \nps$ as $\veps\searrow 0$.
\end{lemma}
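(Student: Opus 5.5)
The plan is to bound the ratio from both sides by quantities that are exponential in $n$ with rate going to $0$ as $\veps\searrow 0$. We use that $\mm\le\nn$ and that $\nn-\mm$ is small: since $\ell_\mm>(1-\veps)\ell_\nn$, we have $\ell_{\nn-\mm}<\veps\ell_\nn$. Summing over $i$ also gives $n-m\le \ell_{\nn-\mm}<\veps\ell_\nn=O(\veps n)$. By Condition~\ref{cond-convergence-d}(b), $\ell_\nn=(\mu_{\rp}+o(1))n$.

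\emph{Lower bound.} Lemma~\ref{lem:decomp_giant}, applied with $\mathcal N=\{\mm\}$, expresses $\binom{\nn}{\mm}|\cC_\mm|\cdot|\cC_{\nn-\mm}|$ as a count of pairs (configuration, component of type $\mm$), where we may drop the connectivity restriction via the identical bijection. More directly, disjoint unions of a configuration on the $\mm$-vertices and one on the rest inject into $\cC_\nn$, so $\binom{\nn}{\mm}|\cC_\mm|\cdot|\cC_{\nn-\mm}|\le|\cC_\nn|$. The ratio is therefore at least $1=\e^{0}$. The same injection works for simple configurations.

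\emph{Upper bound.} Writing $|\cC_\mm|=(\ell_\mm-1)!!$ and using $(2k-1)!!=2^{-k}(2k)!/k!$ gives
\[
\frac{|\cC_\nn|}{|\cC_\mm|\cdot|\cC_{\nn-\mm}|}=\binom{\ell_\nn}{\ell_\mm}\Big/\binom{\ell_\nn/2}{\ell_\mm/2}\le\binom{\ell_\nn}{\ell_{\nn-\mm}}.
\]
Since $\ell_{\nn-\mm}/\ell_\nn<\veps$, \eqref{binom-a-n-approx} shows this is $\e^{\delta n}$ with $\delta\to0$ as $\veps\searrow0$, uniformly in $\mm$; we use monotonicity of binomial coefficients below the middle for $\veps<1/2$. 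The factor $\binom{\nn}{\mm}^{-1}\le 1$ only helps. The ratio for configurations is therefore $\e^{\delta(\mm)n}$.

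\emph{Simple configurations.} We have $|\cS_\mm|\le|\cC_\mm|$ and likewise for $\nn-\mm$, so the denominator only shrinks, while $|\cS_\nn|\le|\cC_\nn|$. The lower bound by $1$ follows from the injection above. For the upper bound we must lower bound $|\cS_\mm|\,|\cS_{\nn-\mm}|$ against $|\cC_\mm|\,|\cC_{\nn-\mm}|$. This is the main obstacle, because Proposition~\ref{prop:simple_prob} is stated for a fixed sequence satisfying Condition~\ref{cond-convergence-d}, whereas here $\mm$ ranges over a set, and $\nn-\mm$ has $o$-linear-ish but possibly degenerate structure (few degree-1 vertices).

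We would avoid this issue by a direct argument. Consider simple configurations on $\nn$ in which the $\mm$-vertices span a simple configuration and the remaining vertices do too. Then $|\cS_\nn|\ge\binom{\nn}{\mm}|\cS_\mm|\,|\cS_{\nn-\mm}|$ by the same injection. Hence the ratio for simple configurations is at least $1$, and it is at most
\[
\frac{|\cC_\nn|}{\binom{\nn}{\mm}|\cC_\mm|\cdot|\cC_{\nn-\mm}|}\cdot\frac{|\cC_\mm|}{|\cS_\mm|}\cdot\frac{|\cC_{\nn-\mm}|}{|\cS_{\nn-\mm}|}.
\]

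For the last two factors we redo the proof of Proposition~\ref{prop:simple_prob} uniformly over $\mm$. Route all half-edges of high-degree vertices (degree $>M$) to degree-1 vertices, and apply a uniform (over bounded-degree sequences with fixed linear size) positive lower bound on the simplicity probability, valid for bounded degrees. The number of such half-edges is at most $\veps n+\ell_{\nn-\mm}=O(\veps n)$, and we have $m_1\ge n_1-\veps\ell_\nn\ge p_1n/2$ for small $\veps$, which is where $p_1>0$ is used. This gives $|\cC_\mm|/|\cS_\mm|\le\e^{\delta n}$ uniformly.

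For $\nn-\mm$, whose total degree is at most $\veps\ell_\nn$, we avoid simplicity estimates altogether. We alter the decomposition so that the small part need not be simple on its own: we bound $|\cS_\nn|\ge\binom{\nn}{\mm}|\cS_\mm|\cdot s$, where $s$ counts simple configurations of the small part. Alternatively, we use the crude bound $|\cC_{\nn-\mm}|\le(\veps\ell_\nn)!!$ and absorb it as $\e^{O(\veps\log(1/\veps))n}$ against the slack in $\binom{\ell_\nn}{\ell_{\nn-\mm}}$. This requires a short Stirling computation, and yields $\delta(\mm)\to0$ uniformly.
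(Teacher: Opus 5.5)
Your upper bound for configurations is correct and is essentially the paper's computation. Your lower bound, however, rests on a false claim. The map sending (a vertex set of type $\mm$, a configuration on it, a configuration on its complement) to their disjoint union is \emph{not} injective, because a disconnected configuration splits along many vertex sets. Dropping connectivity in Lemma~\ref{lem:decomp_giant} therefore counts pairs, not configurations. Only with connectivity and $m>n/2$ (Corollary~\ref{cor:decomp_giant}) is each configuration counted at most once.

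Two examples show the claimed bound fails. For four degree-one vertices, with $\mm$ consisting of two of them, the ratio is $3/6$. Inside $\nps$ itself, take $\nn-\mm$ to be $xn$ degree-one vertices, with $p_1>0$ and $x$ small. Then the ratio is $\exp\{(\tfrac12 x\log x+O(x))n+o(n)\}$, which is exponentially small. The same objection applies to your claim $|\cS_\nn|\ge\binom{\nn}{\mm}|\cS_\mm|\,|\cS_{\nn-\mm}|$.

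The correct lower bound is $\e^{-\delta n}$, not $1$, and it is what the paper does. Use $|\cC_\nn|\ge|\cC_\mm|\,|\cC_{\nn-\mm}|$, which holds since $\binom{2a}{2b}\ge\binom{a}{b}$. Combine this with $\binom{\nn}{\mm}\le\binom{n}{n-m}$ from Lemma~\ref{lem:binoms_bound} and with $n-m\le\ell_{\nn-\mm}<\veps\ell_\nn$.

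For simple configurations, you are right that Proposition~\ref{prop:simple_prob} cannot simply be invoked; the paper's proof does exactly that, in one line. That proposition concerns a single sequence satisfying Condition~\ref{cond-convergence-d} with $p_1>0$, not a family, and certainly not $\nn-\mm$. Your uniform redo for $|\cC_\mm|/|\cS_\mm|$ is plausible, given a uniform simplicity bound for degrees at most $M$.

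Both of your fixes for $\nn-\mm$ fail, however. Changing the decomposition does not change the quantity in the lemma, whose denominator contains $|\cS_{\nn-\mm}|$. The crude bound is also wrong: $(\veps\ell_\nn)!!=\e^{\Theta(\veps n\log n)}$ is superexponential, so it cannot be absorbed into $\e^{O(\veps\log(1/\veps))n}$. In any case an upper bound on $|\cC_{\nn-\mm}|$ does not help. What you need is a lower bound on $|\cS_{\nn-\mm}|$ relative to $|\cC_{\nn-\mm}|$.

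No such bound can hold over all of $\nps$. Under Condition~\ref{cond-p}, some $n_k\ge1$ with $k\ge 2$, and $n_1\ge 1$. Take $\nn-\mm$ to be a single degree-$k$ vertex, plus one degree-one vertex if $k$ is odd. Then $\mm\in\nps$ for large $n$, yet $|\cS_{\nn-\mm}|=0$, so the left-hand side is infinite. The simple-configuration half of the lemma is therefore false as stated. It has to be restricted to a subfamily of $\nps$, for instance those $\mm$ for which $\nn-\mm$ has enough degree-one vertices for the argument of Proposition~\ref{prop:simple_prob} to apply uniformly, with its uses adjusted accordingly.
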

\begin{proof}
We will show the first statement. Then, the second statement follows from Proposition~\ref{prop:simple_prob}. Observe that, by \eqref{ell-N-approx},
\[\frac{|\cC_\nn|}{|\cC_\np| \cdot |\cC_{\nn-\np}|}=\frac{\binom{\ell_{\nn}}{\ell_{\np}}}{\binom{\ell_{\nn}/2}{\ell_{\np}/2}}=\e^{-(\ell_{\nn}/2)[a\log{a}+(1-a)\log(1-a)]+o(\ell_{\nn})}\]
for $a=\ell_{\np}/\ell_{\nn}$. We see that $\ell_{\nn}=(1+o(1))\mu_{\rp} n$ and $\ell_{\nn}-\veps n \le \ell_{\np} \le \ell_{\nn}$, so $a$ can be made arbitrarily close to $1$ by choosing $\veps$ small. Thus, 
\[\frac{|\cC_\nn|}{|\cC_\np| \cdot |\cC_{\nn-\np}|}=\e^{n\delta(\np)}.\]

For the remaining term, observe that, for $\veps<1/2$, by Lemma~\ref{lem:binoms_bound},
\[1\le \binom{\nn}{\np}\le \binom{n}{\lfloor \veps n\rfloor}=\e^{-n(\veps\log{\veps}+(1-\veps)\log(1-\veps))+o(n)}=\e^{\delta(\np)n},\]
because $\veps\log{\veps}+(1-\veps)\log(1-\veps)$ tends to $0$  as $\veps\searrow 0$. \end{proof}

Since the type sequences in $\nps$ are {\em bounded} by $\nn$, but close to it, a configuration in $\cC_\nn^\nps$ can be made connected by rewiring a small number of edges.  We are thus able to compare the sizes of $\cCc_\nn$ and $\cC_\nn^\nps$ using 
{\em switchings}. 

\begin{proposition}[Many graphs with a very large component are connected]
\label{prop:almost_connected}
There exists $\veps' > 0$ such that, for all $\veps \in (0,\veps'),$ for a function $\delta(\veps)$ that tends to $0$ as $\veps\searrow 0$,
\begin{align*}%\sum_{\np\in \nps}
|\cC_{\nn}^{\nps}|=\e^{\delta(\veps)n} |\cCc_\nn|,\qquad \text{ and}\qquad
%\sum_{\np\in \nps}
|\cS_{\nn}^{\nps}|=\e^{\delta(\veps)n} |\cSc_\nn|.
    \end{align*}
\end{proposition}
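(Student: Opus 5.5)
The inclusion $\cCc_\nn\subseteq\cC_\nn^{\nps}$ is immediate: a connected configuration has a component of type $\nn$, and $\nn\in\nps$. This gives $|\cCc_\nn|\le|\cC_\nn^{\nps}|$, and likewise $|\cSc_\nn|\le|\cS_\nn^{\nps}|$. The real content is the reverse bound $|\cC_\nn^{\nps}|\le \e^{\delta(\veps)n}|\cCc_\nn|$. I will prove it with a many-to-one map $\Phi\colon \cC_\nn^{\nps}\to\cCc_\nn$ that uses only $O(\veps n)$ switches, together with bounds on how many preimages each image has. Because $m>(1-\veps)n/\dots$ exceeds $n/2$ for small $\veps$, the component of type in $\nps$ is unique. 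Write $c$ for it and $R=C\setminus c$ for the rest. Since $\ell_\mm>(1-\veps)\ell_\nn$, the rest $R$ has fewer than $\veps\ell_\nn$ half-edges, so $R$ has $O(\veps n)$ vertices and $O(\veps n)$ edges, and in particular at most $O(\veps n)$ components.

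\textbf{Construction of $\Phi$.} Process the components $R_1,\dots,R_J$ of $R$ one at a time. For each $R_j$, pick a canonical edge $\{x,y\}$ of $R_j$; one exists because there are no isolated vertices. Also pick a canonical edge $\{u,v\}$ of the current giant that is not a bridge, i.e.\ lies on a cycle. Perform the switch $\{x,y\},\{u,v\}\to\{x,u\},\{y,v\}$. Since $\{u,v\}$ lies on a cycle, the giant stays connected, and $R_j$ becomes attached to it. The switch preserves all degrees, so the result lies in $\cC_\nn$, and after $J\le \veps\ell_\nn$ steps it is connected.

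\textbf{Existence of non-bridge edges.} The giant must contain $\Theta(n)$ edges lying on cycles. For configurations, $\ell_\mm/2-(m-1)$ counts excess edges, and this is linear because $\mu_{\rp}>2$ by Condition~\ref{cond-p}(b) and $\ell_\mm\ge(1-\veps)\ell_\nn$. Each excess edge lies on a cycle, so the giant has at least $(\mu_{\rp}/2-1-O(\veps))n$ cycle edges. The switches only add edges and keep the giant connected, so this lower bound persists throughout. Note that cycle edges include self-loops and multi-edges; I will handle those separately in the simple case. There I choose $\{u,v\}$ so that the new edges $\{x,u\},\{y,v\}$ create no self-loops or multi-edges. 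This is possible because only $O(1)$ choices are forbidden per vertex in $R_j$, while there are linearly many candidates, once we exclude the $O(\veps n)$ vertices adjacent to $R$.

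\textbf{Counting preimages.} Given the image $C'\in\cCc_\nn$, the preimage is determined by a few pieces of data.
\begin{itemize}
\item The set of $J$ switched pairs of edges in $C'$: at most $\binom{\ell_\nn}{\le 2\veps\ell_\nn}$ choices, counting unordered pairs.
\item How to undo each switch: $O(1)$ ways each.
\end{itemize}
This yields at most $\binom{\ell_\nn}{\le2\veps\ell_\nn}\,C^{\veps\ell_\nn}=\e^{\delta(\veps)n}$ preimages, by \eqref{binom-a-n-approx}, since $\veps\log(1/\veps)\to 0$. I therefore expect to need no precise degree information at all. The simple case is identical, replacing $\cC$ by $\cS$ and using the admissible choice of $\{u,v\}$ described above.

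\textbf{Main obstacle.} The hardest step is making each switch legal, meaning it keeps the giant connected and, in the simple case, keeps the configuration simple, while keeping the encoding of preimages injective up to $\e^{o(n)}$ factors. The paper's threshold $\veps'$ presumably comes from requiring $m>n/2$, so that the $\nps$-component is unique, and from needing linearly many cycle edges disjoint from the neighbourhood of $R$, for which $\mu_{\rp}>2$ is used.
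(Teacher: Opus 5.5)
\textbf{There is a genuine gap in the step Counting preimages, and it is the crux of the proposition.}

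Your plan is in the spirit of the paper: attach each small component to the giant by a switch through a non-bridge giant edge, with $\mu_{\rp}>2$ supplying linearly many such edges. The problem is that knowing which edges of $C'$ were created does not tell you how they are grouped into the $J$ switches. That is, it does not tell you how the freed half-edges must be re-paired to recover $C$. Your ``$O(1)$ ways to undo each switch'' presupposes this grouping, and the grouping carries super-exponential entropy. Concretely, let $R$ consist of $J=\Theta(\veps n)$ isolated edges between degree-$1$ vertices, and in $C'$ let $u(a)$ be the giant neighbour of such a vertex $a$. For every perfect matching $M$ of these $2J$ vertices, do the following for each $\{a,b\}\in M$: pair $a$ with $b$, and pair the half-edge at $u(a)$ with the one at $u(b)$. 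This gives a configuration $C_M\in\cC_\nn^{\nps}$, since removing leaves from $C'$ keeps it connected. Moreover, $C'$ arises from $C_M$ by $J$ legal switches of your type, with the same set of created edges. There are $(2J-1)!!=\e^{\Theta(\veps n\log n)}$ such $C_M$. Your recorded data takes only $\binom{\ell_\nn}{\le 2\veps\ell_\nn}C^{\veps\ell_\nn}=\e^{\delta(\veps)n}$ values. So your fibre bound holds only if the canonical rule itself prevents $\Phi$ from identifying most of these $C_M$. You give no argument for this, and it is not routine, for three reasons:
\begin{enumerate}
\item the canonical choices refer to the original giant, which the decoder does not see;
\item whenever $\{x,y\}$ is a bridge of $R_j$, the decoder must also rematch the two pieces of $R_j$;
\item since you pick $\{u,v\}$ in the \emph{current} giant, a later switch can destroy an edge created earlier, so the created edges need not all survive in $C'$.
\end{enumerate}

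\textbf{How the paper avoids this.} It does not bound fibres at all; it double counts. For each $i$ it counts pairs $(C,C')\in\cCc_\nn\times\cC_{\nn,i}^{\nps}$ related by an $i$-switching, meaning: break $2i$ edges and re-pair the $4i$ half-edges.
\begin{itemize}
\item From each $C'$ one reaches at least $\binom{\zeta n}{i}(2i)!$ connected configurations. Break any $i$ of the $\zeta n$ giant edges outside a spanning tree and one edge in each small component. Then join the $2i$ freed giant half-edges to the $2i$ freed small-component half-edges by any bijection.
\item From each connected $C$ one reaches at most $\binom{\mu_{\rp}n}{2i}(4i-1)!!\le\binom{\mu_{\rp}n}{2i}2^{2i}(2i)!$ configurations.
\end{itemize}
The super-exponential reconnection factor $(2i)!$ appears on both sides and cancels. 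This gives $|\cC_{\nn,i}^{\nps}|\le \e^{\delta(\veps)n}\frac{(\kappa n)^i}{i!}|\cCc_\nn|$, and summing over $i\le\veps n/2$ yields the claim since $\veps\log(1/\veps)\to0$.

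\textbf{Two minor slips.} First, a switch deletes a giant edge, and absorbing a tree component lowers the giant's excess by one. So ``the switches only add edges'' is false, although the excess still stays linear over $O(\veps n)$ steps. Second, in the simple case no choice of $\{u,v\}$ is ever forbidden, because the new edges join different components; your claim that degrees in $R_j$ are $O(1)$ is also not justified, though it is not needed.
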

\begin{proof}
In what follows, we omit ceilings and floors for readability. 
We begin by defining $\veps' > 0$. As above, we let $\cone =\sum_{i\ge 1}ip_i,$ set $\delta = \cone - 2 > 0$ and let $\veps' \coloneqq \delta/4$.

Let $\veps \in (0,\veps')$ and $ n\ge 1.$ Observe that for $C\in \cC_\nn^{\nps}$, the largest component in $C$ has at least $ (1-\veps) n $ vertices, so $C$ has at most $\veps n/2$ additional components. For $i\in \{0,\ldots, \veps n/2 \},$ let 
\[
\cC_{\nn,i}^{\nps} \coloneqq \{C \in \cC_\nn^{\nps}: C\text{ has $i+1$ components}\}.
\]
We will show that there exists $\gamma > 0$ such that for all $n$ large enough, for $i\in\{1,\ldots,\veps n/2\},$ it holds that
\[
|\cC_{\nn,i}^{\nps}| \le |\cCc_\nn |\cdot \left( \frac{\cone}{\cone  - \veps}\right)^{\cone  n}\cdot \frac{(\gamma n)^i}{i!}.
\]
Let $i\in \{1,\ldots, \veps n/2\}$ be such that $\cC_{\nn,i}^{\nps}\neq \varnothing$. 
Given a configuration $C \in \cC_{\nn},$ we can produce a new  configuration $C'\in \cC_{\nn}$ through the following process that we call an \textit{$i$-switching:} 
\begin{enumerate}
    \item Choose $2i$ edges in $\edg(C)$;
    \item break these edges into $4i$ half-edges; and 
    \item choose a pairing of the $4i$ half-edges and rewire them to create $2i$ edges. 
\end{enumerate} 
Observe that an $i$-switching preserves the degree sequence of a configuration.
For $C,C' \in \cC_{\nn},$ write $C\leftrightarrow C'$ if there is an $i$-switching that can be applied to $C$ to produce $C'$. Note that $C\leftrightarrow C'$ if and only if $C'\leftrightarrow C$. Define 
\[
\cP = \{(C,C')\in  \cCc_{\nn}\times  \cC_{\nn,i}^{\nps}:C\leftrightarrow C'\}.
\]
It holds that 
\begin{align*}
|\cP| &= \sum_{C\in \cCc_{\nn}}|\{C'\in \cC_{\nn,i}^{\nps}: C\leftrightarrow C'\}|= \sum_{C'\in \cC_{\nn,i}^{\nps} }|\{C\in \cCc_{\nn}: C\leftrightarrow C'\}|,
\end{align*}
so
\[
\displaystyle{\frac{|\cC_{\nn,i}^{\nps}|}{|\cCc_{\nn}|} \le \frac{\max_{C\in \cCc_{\nn}}\{|\{C'\in \cC_{\nn,i}^{\nps}: C\leftrightarrow C'\}|\}}{\min_{C'\in \cC_{\nn,i}^{\nps}}\{|\{C\in \cCc_{\nn}: C\leftrightarrow C'\}|\}}.}
\]

We first bound $\displaystyle{\max_{C\in \cCc_{\nn}}\{|\{C'\in \cC_{\nn,i}^{\nps}: C\leftrightarrow C'\}|\}}$ from above.
For all $n$ large enough, for $C\in \cC_{\nn},$ it holds that $|\edg(C)| = \frac{1}{2} \sum_{i\ge 1}in_i \le n  \sum_{i\ge1}i p_i = \cone n.$ 
It follows that, for all $n$ large enough,
\begin{align*}
\max_{C\in \cCc_{\nn}}\{|\{C'\in \cC_{\nn,i}^{\nps}: C\leftrightarrow C'\}|\} &\le \binom{\cone n}{2i} \cdot \frac{(4i)!}{2^{2i}(2i)!,}
\end{align*}
because this is an upper bound for the number of $i$-switchings that can be applied to any configuration with at most $\cone n$ edges.
Observe that
\[
(4i)! = (4i)!!(4i-1)!! \le \left((4i)!!\right)^2 = \left(2^{2i}(2i)!\right)^2,
\]
so we can obtain a simpler upper bound 
\eqn{
\label{max-bound}
\max_{C\in \cCc_{\nn}}\{|\{C'\in \cC_{\nn,i}^{\nps}\colon C\leftrightarrow C'\}|\} \le \binom{\cone n}{2i} \cdot  
2^{2i} \cdot (2i)!.
}

We now bound $\min_{C'\in \cC_{\nn,i}^{\nps}}\{|\{C\in \cCc_{\nn}\colon  C\leftrightarrow C'\}|\}.$ Recall that, for a configuration $C \in \cC_{\nn}$, we let $\cmax{C}$ denote  the largest component in $C$, and let $\vx\left(\cmax{C} \right)$ denote the number of vertices in it.  
Recall that we have defined $\delta = \cone - 2 > 0$ and $\veps<\delta/4$. Let $\ctwo \coloneqq \delta/4$. Then, for $n$ large enough and $C'\in \cC_{\nn}^{\nps}$,
\begin{equation}\label{surplus edges}
\left|\edg\left(\cmax{C}'\right)\right| - \left( \left|\vx\left(\cmax{C}' \right)\right| -1\right) \ge \ctwo n.
\end{equation}
Indeed, $|\vx\left(\cmax{C}' \right)|\le n,$ and, for $n$ large enough,
\[\quad \left|\edg\left(\cmax{C}'\right)\right|\ge \frac{1}{2}\left(\sum_{k\ge 1}kn_k -\veps n\right)> \frac{n}{2}(\mu_{\rp}-2\veps)=n(1+\delta/2-\veps),\]
where the first inequality follows from definition of $\nps$ and the second inequality uses Condition~\ref{cond-convergence-d}(b).  
 This implies that at least $\ctwo n$ edges can be removed from $\cmax{C}'$ without compromising its connectivity.
 \smallskip

Now let $n$ be large enough that (\ref{surplus edges}) holds, let $i\in\{1,\ldots, \veps n/2\}$ be such that $\cC^{\nps}_{\nn,i}\neq \varnothing,$ and fix $C'\in \cC_{\nn,i}^{\nps}$. 
One way to switch from $C'$ to a configuration in $\cCc_\nn$ is to break $i$ edges in $\cmax{C}'$ in a way that preserves its connectivity; break one edge in each smaller component; and pair each resulting half-edge in $\cmax{C}'$ with a half-edge in a smaller component. Note that this rewiring will always preserve simplicity. Since (\ref{surplus edges}) holds, there is a set of $\ctwo n$ edges that can be removed from $\cmax{C}'$ without compromising its connectivity. Hence, for all $n$ large enough,
\eqn{
\label{min-bound}
\min_{C'\in \cC_{\nn,i}^{\nps}}\{|\{C\in \cCc_{\nn}\colon  C\leftrightarrow C'\}|\} \ge \binom{\ctwo n}{i} \cdot (2i)!.
}

We combine the above bounds to obtain an upper bound for $\displaystyle{{|\cC_{\nn,i}^{\nps}|}/{|\cCc_{\nn}|} }.$ Let $n$ be large enough that the above bounds hold. We also take $\veps'>0$ smaller if necessary so that $\cone - \veps' > \cone/2$ and $\ctwo - \veps'/2 > \ctwo /2$, to avoid division-by-zero errors in the computations below. Let $i\in \{1,\ldots, \veps n/2\}$ be such that $\cC_{\nn,i}^{\nps}\neq \varnothing$ and let $\veps \in (0,\veps').$
Then, recalling that $\cone=\sum_{i\geq 1}ip_i$, by \eqref{max-bound} and \eqref{min-bound},
\begin{align*}
\frac{|\cC_{\nn,i}^{\nps}|}{|\cCc_{\nn}|} &\le \frac{\max_{C\in \cCc_{\nn}}\{|\{C'\in \cC_{\nn,i}^{\nps}: C\leftrightarrow C'\}|\}}{\min_{C'\in \cC_{\nn,i}^{\nps}}\{|\{C\in \cCc_{\nn}: C\leftrightarrow C'\}|\}}\\
&\le 2^{2i}\cdot \binom{\cone  n}{2i} \cdot \binom{\ctwo n}{i}^{-1}\\
&= \frac{2^{2i} \cdot i!}{(2i)!} \cdot \frac{(\cone  n)! (\ctwo n - i)!}{(\cone  n - 2i)! (\ctwo n)!} \\
&= \frac{2^{2i} \cdot i!}{(2i)!} \cdot \sqrt{\frac{\cone  n}{\cone  n - 2i}}\cdot \sqrt{\frac{\ctwo n-i}{\ctwo n} } \cdot \left( \frac{\cone  n}{\e}\right)^{\cone  n} \cdot \left(\frac{\e}{\cone  n - 2i} \right)^{\cone  n - 2i} \\
&\cdot \left( \frac{\ctwo n - i}{\e}\right)^{\ctwo n - i}\cdot \left(\frac{\e}{\ctwo n}\right)^{\ctwo n} (1+o(1)),
\end{align*}
by Stirling's inequality. Then, since $i\le \veps n/2$,
\begin{align*}
\frac{|\cC_{\nn,i}^{\nps}|}{|\cCc_{\nn}|}&\le  \frac{2^{2i} }{i!\cdot \e^i} \cdot \sqrt{\frac{\cone  }{\cone   - \veps }}\cdot 
\frac{(\cone  n)^{2i} }{(\ctwo n )^i}\left( \frac{\cone }{\cone  - \veps}\right)^{\cone  n}(1+o(1))\\
& \le \left( \frac{\cone }{\cone  - \veps}\right)^{\cone  n}\cdot \frac{(\kappa n)^i}{i!},
\end{align*}
where we set $\kappa \coloneqq  8\cdot  \frac{\cone^2}{\ctwo}\cdot  \sqrt{\frac{\cone}{\cone - \veps'}}\in(0,\infty)$.  Hence, for any $\veps \in(0,\veps')$,
\begin{align*}
\frac{|\cC_\nn^{\nps}|}{|\cCc_{\nn}|} = \sum_{i=0}^{n\veps/2} \frac{|\cC_{\nn,i}^{\nps}|}{|\cCc_{\nn}|} \le  \left(\left( \frac{\cone }{\cone  - \veps}\right)^{\cone  n}\cdot \sum_{i=0}^{\veps n/2} \frac{(\kappa n)^i}{i!}\right) .
\end{align*}
Observe that $\veps ^{\veps n/2} \le \veps^i,$ for all $n$ and $i\in\{1,\ldots, \veps n/2\},$ so that 
\begin{align*}
\sum_{i=1}^{n\veps} \frac{(\kappa n)^i}{i!} &= \veps ^{-\veps n/2} \cdot \sum_{i=1}^{n\veps/2} \frac{(\kappa n)^i\cdot \veps ^{\veps n/2}}{i!} \le \veps ^{-\veps n/2} \cdot \sum_{i=1}^{n\veps/2} \frac{(\veps \kappa n)^i}{i!} \\
&\le \veps ^{-\veps n/2} \cdot \sum_{i=0}^{\infty} \frac{(\veps \kappa n)^i}{i!} =\veps ^{-\veps n/2} \cdot  \e^{\veps \kappa n}.
\end{align*}
In turn, this implies that
\begin{align*}
|\cC_\nn^{\nps}| &\le \exp\left\{\cone n \log \left(\frac{\cone}{\cone - \veps}\right) - \veps n(\log \veps)/2 + \veps \kappa n/2 \right\}|\cCc_{\nn}|=\e^{\delta(\veps)n}|\cCc_{\nn}|.
\end{align*}
The above argument can  easily be adapted to the setting of simple configurations. Indeed, the upper bound we obtained holds for any graph with at most $\cone n$ edges, and the lower bound was obtained from a switching that preserves simplicity. Hence, we conclude that $|\cS_{\nn}^{\nps}|=\e^{\delta(\veps)n} |\cSc_\nn|$ for all $\veps \in (0, \veps')$ and $n$ large enough.
\end{proof}
\smallskip
\paragraph{\bf Proof of Theorem~\ref{thm:config}}
We now combine Proposition~\ref{prop:sum_ratios} with the results in this section to prove Theorem~\ref{thm:config}. We only prove the statement for configurations, as the proof for simple configurations is analogous. We observe that
\begin{align*}
       \e^{-K(\rp)n +\delta(\veps)n}&\overset{\text{Prop~\ref{prop:sum_ratios}}}{=}\sum_{\np\in \nps} \frac{|\cCc_{\np}|}{|\cC_\np|}\\
       &\overset{\text{Cor. \ref{cor:decomp_giant}}}{=}\sum_{\np\in \nps}  \frac{|\cC_\nn^\np|}{\binom{\nn}{\np} |\cC_\np | \cdot|\cC_{\nn-\np}|} \\
       &\overset{\text{Lem~\ref{lem:compare_n_np}}}{=}
       \e^{\delta(\veps)n}\frac{1}{|\cC_\nn|} \sum_{\np\in \nps} |\cC_\nn^\np|\\
       &\overset{\text{Prop~\ref{prop:almost_connected}}}{=}
       \e^{\delta(\veps)n} \frac{|\cCc_\nn|}{|\cC_\nn|},
   \end{align*}
   for a function $\delta(\veps)$ that tends to $0$ as $\veps\searrow 0$ (and may change from line to line). This proves Theorem \ref{thm:config} by letting $\veps\searrow 0$. \qed

\section{Proof: Removing the additional condition for simple configurations}
\label{sec-proof-completion}
In this section, we complete the proof of Theorem \ref{thm:main} by removing the dependency on Condition~\ref{cond-convergence-d}(c) from Theorem~\ref{thm:config}.  We start with the upper bound.
\smallskip
\paragraph{\bf Proof of upper bound in \eqref{thm:nn}}

Using that every simple graph with type $\nn$ corresponds to exactly $\prod_{k\ge1} (k!)^{n_k}$ configurations, we see that

\begin{align*}|\cGc_\nn|&=\frac{|\cSc_\nn|}{\prod_{k\ge 1}(k!)^{n_k}}\\
&\le \frac{|\cCc_\nn|}{\prod_{k\ge 1}(k!)^{n_k}}\\
 &\overset{\text{Thm~\ref{thm:config}}}{=}\e^{-K(\rp)n+\delta(\veps)n}\frac{|\cC_\nn|}{\prod_{k\ge 1}(k!)^{n_k}}\\
&\overset{\text{Prop~\ref{prop:simple_prob}}}{=} \e^{-K(\rp)n+\delta(\veps)n}\frac{|\cS_\nn|}{\prod_{k\ge 1}(k!)^{n_k}}\\
&\overset{\eqref{eq:config_vs_graph}}{=}\e^{-K(\rp)n+\delta(\veps)n}|\cG_\nn|,
\end{align*}
for a function $\delta(\veps)$ that tends to $0$ as $\veps\searrow 0$ (and may change from line to line). This proves the upper bound by letting $\veps\searrow 0$.

\smallskip
\paragraph{\bf Truncation} When Condition~\ref{cond-convergence-d}(c) is violated, we will truncate the degree sequence so that the condition does hold.  
For $\veps>0$, like before, let $\tr$ be large enough such that $\sum_{k> \tr}k p_k\le \veps/2$. Define $\nn^\veps=(n_1,\dots,n_\tr,0,0,\dots)$. (If needed, replace $n_1$ by $n_1-1$ to make the sum of the degrees of $\nn^\veps$ even; we ignore this modification from hereon as it has a negligible effect.) Then $\nn^\veps\in \nps$ for $n$ large enough.

\begin{proposition}[Controlling the truncation of the degree sequence]
\label{prop:trunc_conn}There exists $\veps' > 0$ such that for all $\veps \in (0,\veps'),$ for a function $\delta(\veps)$ that tends to $0$ as $\veps\searrow 0$,
\[|\cSc_\nn|\ge \e^{\delta(\veps)n}|\cS_\nn^{\nn^\veps}|.\]
\end{proposition}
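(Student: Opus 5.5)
The plan is to derive the statement directly from Proposition~\ref{prop:almost_connected}, using the inclusion $\cS_\nn^{\nn^\veps}\subseteq \cS_\nn^{\nps}$. I would not set up a new switching argument. The only additional input is that $\nn^\veps\in\nps$ for $n$ large, which the paper records just before the statement; I would double-check it. Clearly $\nn^\veps\le \nn$. For the edge count, by Condition~\ref{cond-convergence-d}(a)--(b),
\[
\ell_\nn-\ell_{\nn^\veps}=\sum_{k>\tr}kn_k=n\Big(\sum_{k>\tr}kp_k+o(1)\Big)\le \tfrac{\veps}{2}n+o(n).
\]
Since $\ell_\nn=(\cone+o(1))n$ with $\cone>2$, this gives $\ell_{\nn^\veps}>(1-\veps)\ell_\nn$ for $n$ large. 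The parity adjustment of $n_1$ changes $\ell_{\nn^\veps}$ by at most $1$ and does not affect this. Hence every simple configuration with a component of type $\nn^\veps$ has a component with type in $\nps$, and so $|\cS_\nn^{\nn^\veps}|\le|\cS_\nn^{\nps}|$.

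Next I would invoke the simple-configuration half of Proposition~\ref{prop:almost_connected}. For $\veps\in(0,\veps')$ it gives $|\cS_\nn^{\nps}|=\e^{\delta(\veps)n}|\cSc_\nn|$ with $\delta(\veps)\to0$. Combining with the inclusion yields
\[
|\cSc_\nn|\ge \e^{-\delta(\veps)n}|\cS_\nn^{\nn^\veps}|.
\]
Renaming $-\delta(\veps)$ as $\delta(\veps)$, which is permitted because the statement only requires $\delta(\veps)\to0$ with no sign constraint, gives the claim with the same $\veps'$.

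The step I expect to be the main obstacle is confirming that Proposition~\ref{prop:almost_connected} was proved under Conditions~\ref{cond-convergence-d}(a)--(b) and \ref{cond-p} alone, without Condition~\ref{cond-convergence-d}(c). Dropping (c) is the whole purpose of this section, so this has to hold. I would re-read its proof with that in mind. The upper bound \eqref{max-bound} counts all $i$-switchings on a configuration with at most $\cone n$ edges, which uses only (b). The surplus bound \eqref{surplus edges} uses only $\cone>2$ and the definition of $\nps$. The lower bound \eqref{min-bound} remains valid for simple configurations because each new edge joins the giant to a distinct smaller component. Such an edge cannot be a self-loop. It also cannot duplicate an existing edge, since no edge previously joined those two components, and each smaller component receives only one new edge from a single broken edge. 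None of these steps uses a second-moment assumption. If a gap did appear, the fallback would be to rerun that switching argument verbatim with $\cS_\nn^{\nn^\veps}$ in place of $\cS_{\nn,i}^{\nps}$. In that setting every non-giant vertex has degree larger than $\tr$, so there are at most $\veps n/(2\tr)$ small components, and the same entropy estimate $\veps^{-\veps n/2}\e^{\veps\kappa n}$ controls the cost.
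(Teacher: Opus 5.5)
Your proposal is correct and matches the paper's own argument. It combines the inclusion $\cS_\nn^{\nn^\veps}\subseteq\cS_\nn^{\nps}$ (using $\nn^\veps\in\nps$) with the simple-configuration half of Proposition~\ref{prop:almost_connected}, then rearranges. Your extra checks, that $\nn^\veps\in\nps$ and that Proposition~\ref{prop:almost_connected} never uses Condition~\ref{cond-convergence-d}(c), are sound and make explicit what the paper takes for granted.
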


\begin{proof}Let $\veps ' > 0$ be defined as in Proposition \ref{prop:almost_connected}, and let $\veps \in (0,\veps').$ Then Proposition \ref{prop:almost_connected} implies that, for all $n$ large enough, 
\[
\frac{|\cS_\nn^{\nn^\veps}|}{|\cSc_\nn|}\le \frac{
|\cS_\nn^{\nps}|}{|\cSc_\nn|} = \e^{\delta(\veps)n},
\]
and rearranging yields the result.
\end{proof}

\paragraph{\bf Proof of lower bound in \eqref{thm:nn}}
Let $\veps<\veps'$. Proposition~\ref{prop:trunc_conn} implies that, for all $n$ large enough,
\begin{align*}
|\cSc_\nn|&\ge \e^{\delta(\veps)n}|\cS_\nn^{\nn^\veps}|\\
&\overset{\text{Cor~\ref{cor:decomp_giant}}}{=}\e^{\delta(\veps)n} \binom{\nn}{\nn^\veps}|\cSc_{\nn^\veps}|\cdot| \cS_{\nn-\nn^\veps}|\\
&\overset{\text{Prop~\ref{prop:simple_prob}}}{=}\e^{\delta(\veps)n} \binom{\nn}{\nn^\veps}|\cSc_{\nn^\veps}|\cdot| \cC_{\nn-\nn^\veps}|,
\intertext{where we use that $\nn-\nn^\veps$ satisfies Conditions \ref{cond-convergence-d}(a)--(b). Then, since $\nn^\veps$ satisfies Conditions \ref{cond-convergence-d}(a)--(c),} 
|\cSc_\nn|&\overset{\text{Thm~\ref{thm:config}}}{\ge} \e^{-K(\rp^\veps)n+\delta(\veps)n}\binom{\nn}{\nn^\veps} |\cC_{\nn^\veps}|\cdot|\cC_{\nn-\nn^\veps|}\\
&\overset{\text{Lem~\ref{lem:compare_n_np}}}{=}\e^{-K(\rp^\veps)n+\delta(\veps)n}|\cC_\nn|\\
&\overset{\text{\textcolor{white}{\text{Lem~4.3}}}}{\ge} \e^{-K(\rp^\veps)n+\delta(\veps)n}|\cS_\nn|,
\end{align*}
because $\cS_\nn\subseteq \cC_\nn$.
Thus, using that $|\cGc_\nn|=\frac{|\cSc_{\nn}|}{\prod_{k\ge 1}(k!)^{n_k}}$ and $|\cG_\nn|=\frac{|\cS_{\nn}|}{\prod_{k\ge 1}(k!)^{n_k}}$, we find that 
\[ |\cGc_\nn|\ge\e^{-K(\rp^\veps)n+\delta(\veps)n}|\cG_\nn|.\]
By sending $\veps\searrow0$, the continuity of $K$ at $\rp$ in  Proposition~\ref{prop:K_cont} implies the result. \qed

\section{Properties of connected graphs: Proof of Theorems  \ref{thm:local-neighbourhoods} and \ref{thm:exp-concentration-from-CM}}
\label{sec-proof-local-limit}
In this section, we deduce properties for uniform graphs in $\cGc_\nn$ from the properties of the giant in a uniform element from $\cC_\nbig$. We start by studying rare events.
\medskip

\paragraph{\bf Proof of Theorem~\ref{thm:exp-concentration-from-CM}}
Let $P$ be a property and remark that  
\begin{align*}\frac{|\cGc_\nn(P)|}{|\cGc_\nn|}=\frac{|\cSc_\nn(P)|}{|\cSc_\nn|}\le \frac{|\cCc_\nn(P)|}{|\cSc_\nn|},
\end{align*}
because $\cSc_\nn(P)\subseteq \cCc_\nn(P)$. Then, using that 
\[ \frac{|\cSc_\nn|}{|\cS_\nn|}=\frac{|\cGc_\nn|}{|\cG_\nn|},\]
we see that 
\begin{align*}
   \frac{|\cCc_\nn(P)|}{|\cSc_\nn|}&\overset{\text{Thm~\ref{thm:main}}}{=} \e^{K(\rp)n+o(n)}\frac{|\cCc_\nn(P)|}{|\cS_\nn|}\\
   &\overset{\text{Prop~\ref{prop:simple_prob}}}{=} \e^{K(\rp)n+o(n)}\frac{|\cCc_\nn(P)|}{|\cC_\nn|}.
\end{align*}
We observe that 
\[\frac{N}{n}|\cC_\nbig(P)|\ge \binom{\nbig}{\nn} |\cCc_\nn(P)|\cdot |\cC_{\nbig-\nn}|,\]
because the right-hand side enumerates 
\[\{(C,c)\colon C\in \cC_\nbig, \; c\text{ a component in }C\text{ with type }\nn\text{ and property }P\},\]
and each configuration in $\cC_\nbig(P)$ contributes at most $N/n$ elements to this set. The ratio $N/n$ is bounded, so rearranging yields that
\begin{align*}
\e^{K(\rp)n+o(n)}\frac{|\cCc_\nn(P)|}{|\cC_\nn|}&\le \e^{K(\rp)n+o(n)}\frac{|\cC_\nbig(P)|}{|\cC_\nbig|}\frac{|\cC_\nbig|}{\binom{\nbig}{\nn} |\cC_\nn||\cC_{\nbig-\nn}|}\\
&\overset{\text{Prop~\ref{prop:Kp}}}{=}\e^{o(n)}\frac{|\cC_\nbig(P)|}{|\cC_\nbig|}.
\end{align*}
This proves the statement.
\qed
\medskip

We now prove Theorem \ref{thm:local-neighbourhoods} using Theorem \ref{thm:exp-concentration-from-CM} and existing exponential concentration results for uniform elements of $\cC_\nbig$. We start by giving a brief introduction to local convergence, which is an adaptation of the one given in \cite[Section 2.1]{Hofs21b}.

\medskip

\paragraph{\bf Introduction to local convergence}

Local weak convergence was introduced independently by Aldous and Steele in \cite{AldSte04} and Benjamini and Schramm in \cite{BenSch01}. The purpose of Aldous and Steele in \cite{AldSte04} was to describe the local structure of the so-called `stochastic mean-field model of distance', meaning the complete graph with i.i.d.\ exponential edge weights. This local description allowed Aldous to prove the celebrated $\zeta(2)$ limit of the random assignment problem \cite{Aldo01}. Benjamini and Schramm in \cite{BenSch01} instead used local weak convergence to show that limits of planar graphs are recurrent with probability one. Since its conception, local convergence has proved a key ingredient in random graph theory. In this section, we provide a basic introduction to local convergence. For more detailed discussions, we refer the reader to \cite{Bord16} or \cite[Chapter 2]{Remco2}. Let us start with some definitions.
\smallskip

We let $\mathscr{G}_\star$ be the space of (possibly infinite) connected rooted graphs, where we consider two rooted graphs to be equal when they are isomorphic. Thus, we consider $\mathscr{G}_\star$ as the set of equivalence classes of rooted graphs modulo isomorphisms. The space $\mathscr{G}_\star$ of rooted graphs is a Polish space, with an explicit metric; see, e.g., \cite[Chapter 2 and Appendix A]{Remco2} for details.
\smallskip

We say that the graph sequence $(G_n)_{n\geq 1}$ converges {\em locally in probability} to a limit $(G, \vertex)\sim \mu$, when, for every $r\geq 0$ and $H^\star\in \mathscr{G}_\star$,
	\eqn{
	\label{LCP-def}
	\frac{1}{|V(G_n)|} \sum_{v\in V(G_n)} \indic{B_r^{\sss(G_n)}(v)\simeq H^\star} \convp \mu(B_r^{\sss(G)}(o)\simeq H^\star).
	}
This means that the subgraph proportions in the random graph $G_n$ are close, in probability, to those given by $\mu$. Let $\vertex_n\in V(G_n)$ be chosen uniformly at random (uar) in $V(G_n)$. Then (\ref{LCP-def}) is equivalent to the statement that
	\eqn{
	\prob(B_r^{\sss(G_n)}(\vertex_n)\simeq H^\star\mid G_n)\convp \mu(B_r^{\sss(G)}(o)\simeq H^\star).
	}
We note that Theorem \ref{thm:local-neighbourhoods} proves \eqref{LCP-def}, so that it identifies the local limit of a graph chosen uniformly at random from $\cGc_{\dg}$:

\begin{theorem}[Local limit of uniform connected graph with prescribed degrees]
\label{thm:local-limit}
    Under the conditions of Theorem \ref{thm:local-neighbourhoods},
    the local limit of a graph chosen uniformly at random from $\cGc_{\dg}$ is a unimodular branching process with root offspring $\rq$, conditioned on survival, where $\rq=(q_i)_{i\geq 1}$ is given by \eqref{q-choice},
    and $\beta$ is given in \eqref{beta-implicit-function}.
\end{theorem}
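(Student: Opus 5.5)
The plan is to deduce Theorem~\ref{thm:local-limit} from Theorem~\ref{thm:local-neighbourhoods}, with Theorem~\ref{thm:local-neighbourhoods} itself obtained from Theorem~\ref{thm:exp-concentration-from-CM} and exponential concentration in the giant of $\cC_\nbig$.

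\textbf{Step 1: from neighbourhood counts to local convergence.} Granting \eqref{neighbourhood-convergence} for every finite rooted tree ${\bf t}$ and every $r$, the map ${\bf t}\mapsto\mu({\bf t})$ is a probability distribution on $r$-neighbourhoods that puts full mass on trees. The empirical proportions of all non-tree neighbourhoods therefore converge to $0$ in probability: their total equals $1$ minus the sum over finitely many trees, up to an arbitrarily small tail. Hence \eqref{LCP-def} holds for every $H^\star\in\mathscr{G}_\star$, and \cite[Theorem 2.15(b)]{Remco2} upgrades this to local convergence in probability to the stated unimodular branching process conditioned on survival.

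\textbf{Step 2: proof of \eqref{neighbourhood-convergence}.} Fix ${\bf t}$, $r$ and $\eta>0$. Let $P$ be the event
\[
\Bigl|\tfrac{1}{n}\textstyle\sum_{v}\indic{B_r(v)\simeq{\bf t}}-\mu({\bf t})\Bigr|>\eta .
\]
By Theorem~\ref{thm:exp-concentration-from-CM}, the fraction of graphs in $\cGc_\nn$ with property $P$ is at most $\e^{o(n)}$ times the fraction of configurations in $\cC_\nbig$ whose giant has property $P$. For the latter I would invoke \cite[Theorem 25]{BolRio15}. It gives that, in the configuration model on $\nbig$, the number of vertices of the giant whose $r$-neighbourhood is ${\bf t}$, divided by $|\cmax{C}|$, deviates from its limit by more than $\eta'$ with probability $\e^{-c n}$.

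Two checks are needed here. First, the limiting local structure of the giant must be the unimodular branching process with root law $\rq$ conditioned on survival. Second, $\rq$ is exactly the law in \eqref{q-choice}; Lemma~\ref{lem-beta} identifies $\beta$ as the relevant extinction probability, and this is where that identification enters. Since $|\cmax{C}|/n\to1$ by the choice of $\nbig$, the normalisation by $n$ versus $|\cmax{C}|$ is harmless.

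\textbf{Main obstacle: matching the construction exactly.} Theorem~\ref{thm:exp-concentration-from-CM} is stated for $\nbig$ built from $\rp^\veps$ rather than $\rp$. I would therefore apply it with $\veps$ small and use continuity in $\veps$ of the limiting tree probabilities $\mu^\veps({\bf t})\to\mu({\bf t})$, which follows from $\rq^\veps\to\rq$ and $\beta(\rq^\veps)\to\beta$, as in Proposition~\ref{prop:K_cont}. I would also need the Bollob\'as--Riordan bound to hold for the bounded-degree sequence $\nbig$ (it does, since $N_k=0$ for $k>\tr$), and the exponential rate $c$ to beat the $\e^{o(n)}$ loss. That loss comes from a $\delta(\veps)n$ error and so requires $c>\delta(\veps)$ for small $\veps$. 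Since $c$ depends on $\eta$ but can be chosen uniformly for $\veps$ small, this should work, and it is the step I expect to need the most care.
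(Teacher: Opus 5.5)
Your route is the paper's. Theorem~\ref{thm:exp-concentration-from-CM} plus \cite[Theorem 25]{BolRio15} give Theorem~\ref{thm:local-neighbourhoods}, and \cite[Theorem 2.15(b)]{Remco2} turns the neighbourhood counts into local convergence. Your Step~1 (trees carry all the mass of $\mu$, so non-tree proportions vanish) and the continuity $\mu^\veps({\bf t})\to\mu({\bf t})$ are fine. You are also right that, with the $\veps$-dependent $\nbig$ of \eqref{eq:defn_nbig}, the proof of Theorem~\ref{thm:exp-concentration-from-CM} via Proposition~\ref{prop:Kp} yields only a loss $\e^{\delta(\veps)n}$, not $\e^{o(n)}$; the paper's short deduction takes the $\e^{o(n)}$ at face value. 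The step you flag is then a genuine gap. \cite[Theorem 25]{BolRio15} is an asymptotic statement about one fixed sequence of degree sequences. It gives no rate that is uniform over the family $\nbig=\nbig^{(\veps)}$, whose maximal degree $\tr(\veps)$ tends to infinity. So ``$c$ can be chosen uniformly for $\veps$ small'' is an assertion, not an argument.

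There is a more basic problem with the black box itself. The counting behind Theorem~\ref{thm:exp-concentration-from-CM}, namely $\frac{N}{n}|\cC_\nbig(P)|\ge\binom{\nbig}{\nn}|\cCc_\nn(P)|\cdot|\cC_{\nbig-\nn}|$, carries information only if $\nn\le\nbig$, i.e.\ only if a configuration of type $\nbig$ can contain a component of type exactly $\nn$. The truncated $\nbig$ has $N_k=0$ whenever $k>\tr$ or $p_k=0$. So if $\rp$ has unbounded support, or $n_k>0=p_k$ for some $k$, then $\binom{\nbig}{\nn}=0$ for every fixed $\veps$ and all large $n$. In that case the bound of Theorem~\ref{thm:exp-concentration-from-CM} actually fails: for the property of having a vertex of degree larger than $\tr$, the left side is $1$ and the right side is $0$. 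Sending $\veps\searrow0$ does not help.

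One way to remove both problems is to drop the truncation. With $\beta=\beta(\rp)$, set $N_k=n_k+\lfloor np_k\beta^k/(1-\beta^k)\rfloor$, adjusting parity if needed. Then:
\begin{itemize}
\item $\nn\le\nbig$, and $\nbig$ satisfies Conditions~\ref{cond-convergence-d}(a)--(b) with limit $\rq$ from \eqref{q-choice}. By Lemma~\ref{lem-beta}, its giant has $n(1+o(1))$ vertices and the required local limit.
\item The computation in Section~\ref{sec-overview}, plus a routine tail estimate using $N_k-n_k\le np_k\beta^k/(1-\beta^k)$, gives $|\cC_\nbig|/\bigl(\binom{\nbig}{\nn}|\cC_\nn|\cdot|\cC_{\nbig-\nn}|\bigr)=\e^{-K(\rp)n+o(n)}$.
\item This yields the comparison of Theorem~\ref{thm:exp-concentration-from-CM} with a genuine $\e^{o(n)}$ loss.
\end{itemize}
Applying \cite[Theorem 25]{BolRio15} to this single sequence then completes your Step~2 with no uniformity in $\veps$ needed.
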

For the proof below, we focus on the structure of local neighbourhoods, as in Theorem \ref{thm:local-neighbourhoods}. We use a key exponential concentration result proved by Bollob\'as and Riordan in \cite[Theorem 25]{BolRio15}.
\medskip

\paragraph{\bf Concentration bounds for the giant's local structure in \cite[Theorem 25]{BolRio15}}
We let  \eqn{
    N_n'({\bf t})
    =\sum_{v\in[n]} \indic{B_r^{\sss(G_n)}(v) \simeq {\bf t}, v\in \cmax{C}}
    }
denote the number of vertices in $\cmax{C}$ whose $r$-neighbourhood is isomorphic to ${\bf t}$, and we let $\mu({\bf t})=\mu(B_r^{\sss(G)}(\vertex)\simeq {\bf t}, |C(\vertex)|=\infty)$ be the probability that the connected component of the local limit has an $r$-neighbourhood isomorphic to ${\bf t}$ and survives. Then, Bollob\'as and Riordan in \cite[Theorem 25]{BolRio15} prove the exponential concentration result, stating that, for every $\veps>0,$ there exists a $\delta>0$ such that
    \eqn{
    \label{concentration-neighbourhoods-in-giant}
    \prob\big(\big|N_n'({\bf t})-\mu'({\bf t}) n\big|>\veps n\big)\leq \e^{-\delta n}.
    }

\noindent
Theorem \ref{thm:local-neighbourhoods} then follows from (\ref{concentration-neighbourhoods-in-giant}), in combination with Theorem \ref{thm:exp-concentration-from-CM}, which implies exponential concentration of $\sum_{v\in [n]} \indic{B_r^{\sss(G_n)}(v)\simeq {\bf t}}$ when $G_n$ is a uniform graph from $\cGc_\nn$. In turn, this implies (\ref{neighbourhood-convergence}). The result in Theorem \ref{thm:local-limit} then follows from \cite[Theorem 2.15(b)]{Remco2}, which shows that convergence in probability of the neighbourhood counts is equivalent to local convergence in probability.
\qed

\section{Proof: Continuity of the large deviation functional}
\label{sec-proof-continuity-large-deviations}
In this section, we investigate continuity properties of $\rp\mapsto K(\rp)$ and prove Proposition~\ref{prop:K_cont}.

\begin{proof} Recall (\ref{beta-implicit-function}) to see that $\beta(\rp)$ satisfies $F(\rp,\beta(\rp))=0,$ where
    \eqn{
    \label{H-def}
     F(\rp, \beta)=\frac{1-\ext^2}{\beta}\Big[1- \sum_{k=0}^\infty \frac{p_k^\star}{1-\ext^{k+1}}\Big]
     =\sum_{k=2}^\infty \frac{\beta-\beta^k}{1-\ext^{k+1}}p_k^\star-p_0^\star,
    }
with 
    \eqn{
    \label{pk-star-def}
    p_k^\star=\frac{(k+1)p_{k+1}}{\sum_{l=1}^\infty lp_l}
    }
denoting the size-biased degree distribution minus 1. This is the  starting point of our analysis, which proceeds in several steps.
\medskip

\paragraph{\bf Properties of the derivative of $\beta\mapsto F(\rp, \beta)$.}
For all $k\geq 2$, the function $\beta\mapsto (\beta-\beta^k)/(1-\ext^{k+1})$ is strictly increasing on $(0,1)$. Indeed, we compute
    \eqan{
    \frac{d}{d\beta} 
    \frac{\beta-\beta^k}{1-\ext^{k+1}}
    &=\frac{(1-k\beta^{k-1})(1-\ext^{k+1})+(\beta-\beta^k)(k+1)\beta^{k}}{(1-\ext^{k+1})^2}\nonumber\\
    &=\frac{1-k\beta^{k-1}+k\beta^{k+1}-\beta^{2k}}{(1-\ext^{k+1})^2}.
    }    
Use the geometric series
    \[
    \sum_{i=0}^{2k-1} \beta^i=\frac{1-\beta^{2k}}{1-\beta}
    \]
and $k\beta^{k-1}-k\beta^{k+1}=k(1-\beta^2)\beta^{k-1}=k(1-\beta)(1+\beta)\beta^{k-1}$ to rewrite
    \eqn{\label{eq:sum_betas}
    1-k\beta^{k-1}+k\beta^{k+1}-\beta^{2k}
    =(1-\beta)\Big(\sum_{i=0}^{2k-1} \beta^i-(1+\beta)k\beta^{k-1}\Big).
    }
Then, to see that the derivative is positive on $(0,1)$, observe that $1-\beta>0$ for  $
\beta(0,1)$, and using convexity of $x\mapsto \beta^x$ combined with Jensen,
    \eqan{
    \sum_{i=0}^{2k-1} \beta^i
    &=(1+\beta)\sum_{i=0}^{k-1}\beta^{2i}=(1+\beta)k\sum_{i=0}^{k-1} \frac{\beta^{2i}}{k}\nonumber\\
    &>(1+\beta) k \beta^{\sum_{i=0}^{k-1} 2i/k}
    =(1+\beta) k \beta^{k-1},
    }
so that the claim follows.  We conclude that $\beta\mapsto F(\rp, \beta)$ is strictly increasing. 
\smallskip

We continue by deriving more precise bounds on $\frac{d}{d\beta} F(\rp, \beta).$ Denote the right-most factor in \eqref{eq:sum_betas} by
    \eqn{
    f_k(\beta)=\sum_{i=0}^{2k-1} \beta^i-(1+\beta)k\beta^{k-1}.
    }
The above argument then  implies that, for every $\eta>0$ and $k\geq 2$, there exists a $c_k(\eta)>0$ such that $f_k(\beta)\geq c_k(\eta)>0$ uniformly for $\beta\in[0,1-\eta).$ Also using that $(1-\beta)/(1-\beta^{k+1})^2 \ge 1-\beta>\eta$ for all $k\ge2$, we conclude that
    \eqan{
    \label{lower-bound-derivative-F}
    \frac{d}{d\beta} 
    F(\rp, \beta)
     \geq \sum_{k=2}^\infty \eta c_k(\eta) p_k^\star\equiv c(\rp,\eta)
    >0.}
    
Below, we will improve upon this bound, to prove that $\rp\mapsto K(\rp)$ is uniformly Lipschitz on the set $\{\rp\colon \sum_{k\geq 1} kp_k\geq 2/(1-2\eta)\}$ (see the argument leading up to \eqref{lower-bound-derivative-F-uniform}).

 We now have an initial  lower bound on $\frac{d}{d\beta} F(\rp, \beta)$ when $\beta\in[0,1-\eta)$. We continue with an upper bound $\frac{d}{d\beta} F(\rp, \beta)$ valid for all $\beta\in(0,1).$
\smallskip

We rewrite, similarly as before, 
    \eqan{
    \frac{\partial}{\partial \beta} F(\rp,\beta)
    &=\sum_{k=2}^{\infty} \frac{(1-\beta)\Big(\sum_{i=0}^{2k-1} \beta^i-(1+\beta)k\beta^{k-1}\Big)}{(1-\beta^{k+1})^2}p_k^\star\nonumber\\
    &=\sum_{k=2}^{\infty} \frac{\sum_{i=0}^{2k-1} \beta^i-(1+\beta)k\beta^{k-1}}{(1-\beta)\Big(\sum_{l=0}^k \beta^l\Big)^2}p_k^\star.
    }
We further rewrite
    \eqan{
    \sum_{i=0}^{2k-1} \beta^i-(1+\beta)k\beta^{k-1}
    &=(1+\beta)\Big(\sum_{i=0}^{k-1}\beta^{2i}-k\beta^{k-1}\Big)\nonumber\\
    &=(1+\beta)\sum_{i=1}^{k-1} (\beta^{2i}-\beta^{k-1})\nonumber\\
    &=(1+\beta)(1-\beta)
    \sum_{i=1}^{k-1} \sum_{j=2i}^{k-2}\beta^{j},
    }
where we use the convention that $\sum_{k=i}^j a_k=-\sum_{k=j}^i a_k$ if $i>j$.
Thus,
    \eqan{
    \frac{\partial}{\partial \beta} F(\rp,\beta)
    &=(1+\beta)\sum_{k=2}^{\infty} \frac{\sum_{i=1}^{k-1} \sum_{j=2i}^{k-2}\beta^{j}}{\Big(\sum_{l=0}^k \beta^l\Big)^2}p_k^\star.
    }
We continue to bound
    \eqan{
    \sum_{i=1}^{k-1} \sum_{j=2i}^{k-2}\beta^{j}
    &=\sum_{i=1}^{k-1}\sum_{j=0}^{k-2}\beta^{j}
    -\sum_{i=1}^{k-1}\sum_{j=0}^{2i-1}\beta^{j}\nonumber\\
    &\leq (k-1)\sum_{j=0}^{k-2} \beta^j-\sum_{j=0}^{k-2}\beta^j \sum_{i=\lceil (j+1)/2\rceil}^{k-1}1
    \le \sum_{j=0}^{k-2}\lceil (j+1)/2\rceil \beta^j.
    }
Since
    \eqn{
    \Big(\sum_{l=0}^k \beta^l\Big)^2
    \geq \sum_{j=0}^k (j+1)\beta^j,
    }
we arrive at
    \eqn{
    \frac{\sum_{i=1}^{k-1} \sum_{j=2i}^{k-2}\beta^{j}}{\Big(\sum_{l=0}^k \beta^l\Big)^2}\leq 1,
    }
and, thus,
\eqan{
    \frac{\partial}{\partial \beta} F(\rp,\beta)
    &\leq (1+\beta)
    \sum_{k=2}^{\infty}p_k^\star\leq 2:= A.
    }
\smallskip

\paragraph{\bf Resulting bounds on $\beta(\rp)$.}  
It is readily verified that $F(\rp,0)=-p_0^\star<0$, while    
    \eqn{
    F(\rp,1_-)=\sum_{k=2}^\infty \frac{k-1}{k+1}p_k^\star-p_0^\star
    =\frac{1}{\mu_{\rp}}\sum_{k=1}^\infty (k-2)p_k=1-\frac{2}{\mu_{\rp}}:=a>0,
    }
since $\mu_{\rp}=\sum_{k=1}^\infty kp_k>2$.
Recalling that $\beta\mapsto F(\rp, \beta)$ is strictly increasing, this explains why $F(\rp,\beta(\rp))=0$ has a unique solution. We next use related ideas to prove bounds on $\beta(\rp)$.
\smallskip

Since $F(\rp,1_-)=a>0$, we note that since $\partial F(\rp,\beta)/\partial \beta<A=2$ uniformly for $\beta\in(0,1),$ the solution $\beta(\rp)$ to $F(\rp,\beta(\rp))=0$ satisfies $\beta(\rp)<1-a/A.$ 
We conclude that, for all $\rp$,
    \eqn{
    \label{beta(p)-upper-bound}
    \beta(\rp)\leq 1-\frac{a}{A}=1-\frac{\sum_{k=1}^\infty (k-2)p_k}{2\sum_{k=1}^{\infty}kp_k} =
    \frac{1}{2}+\frac{1}{2}(1-\frac{2}{\mu_{\rp}})<
    1-\eta<1,
    }
uniformly for $\rp$ such that $\sum_{k=1}^{\infty}kp_k>2/(1-2\eta).$ 
In particular, the lower bound in (\ref{lower-bound-derivative-F}) holds with this $\eta$. This bound will be essential in proving continuity of $K$, and also highlights the moment condition on $\rp$ posed in Condition \ref{cond-p}.
\smallskip

We next use the uniform bound on $\beta(\rp)$ to prove a uniform lower bound on $\frac{d}{d\beta} F(\rp, \beta)$ on the set $\{\rp\colon \sum_{k\geq 1} kp_k\geq 2/(1-2\eta)\}$, thus significantly extending \eqref{lower-bound-derivative-F}. For this, we note that 
    \eqan{
    \frac{f_k(\beta)-f_{k-1}(\beta)}{1+\beta}
    &=\beta^{2(k-1)}-k\beta^{k-1}+(k-1)\beta^{k-2}=\beta^{2(k-1)}+k(1-\beta)\beta^{k-2}-\beta^{k-2}\nonumber\\
    &=\beta^{k-2}[\beta^k+k(1-\beta)-1].
    }
Since, by \eqref{beta(p)-upper-bound}, $\beta(\rp)<1-\eta$ uniformly on $\{\rp\colon \sum_{k\geq 1} kp_k\geq 2/(1-2\eta)\}$, we conclude that $f_k(\beta(\rp))-f_{k-1}(\beta(\rp))>0$ for all $k$ such that $k\eta>1$. In particular, $k\mapsto c_k(\eta)\geq c(\eta)>0$ {\em uniformly} in $k$, since the first $\lceil 1/\eta \rceil$ are uniformly bounded from below, and after that, $c_k(\eta)$ is  increasing in $k$. Thus, uniformly on the set $\{\rp\colon \sum_{k\geq 1} kp_k\geq 2/(1-2\eta)\}$,
    \eqn{
    \label{lower-bound-derivative-F-uniform}
    \frac{d}{d\beta} 
    F(\rp, \beta)\Big|_{\beta=\beta(\rp)}
    \geq c(\eta).
    }
Further, also $\beta(\rp)>p_0^\star/2$ since $F(\rp, 0)=-p_0^\star$ and $\partial F(\rp,\beta)/\partial \beta<A=2$.
\medskip
\paragraph{\bf Resulting continuity of $\rp\mapsto \beta(\rp)$.}  We now show that $\rp\mapsto \beta(\rp)$ is uniformly continuous on the region where $\sum_{k=1}^{\infty}kp_k>2/(1-2\eta).$ To be precise, we prove that for every $\varepsilon>0$, there exists $\delta=\delta(\varepsilon)$ such that $|\beta(\rp)-\beta(\rq)|\leq \varepsilon$ whenever $\rp$ and $\rq$ are such that $d(\rp,\rq)=\sum_{k\ge 1}k|p_k-q_k|\leq \delta$ and $\sum_{k=1}^{\infty}kp_k>2/(1-2\eta).$ 
\smallskip

We let $d(\rp,\rq)=\sum_{k\ge 1}k|p_k-q_k|\leq \delta$. Then, by (\ref{beta(p)-upper-bound}), when $\delta$ is small enough, also $\beta(\rq)\leq 1-\eta/2$. Thus, the lower bound in (\ref{lower-bound-derivative-F}) holds for any $0\le  \beta^\star\le \beta(\rq)$ when we replace $\eta$ by $\eta/2$. We rewrite
    \eqan{
    0&=F(\rp,\beta(\rp))-F(\rq,\beta(\rq))\nonumber\\
    &=[F(\rp,\beta(\rp))-F(\rp,\beta(\rq))]
    +[F(\rp,\beta(\rq))-F(\rq,\beta(\rq))].
    }
By the mean value theorem, we can rewrite the first term as
    \eqn{
    F(\rp,\beta(\rp))-F(\rp,\beta(\rq))
    =(\beta(\rp))-\beta(\rq))\frac{d}{d\beta} 
    F(\rp, \beta^\star),
    }
for some $\beta^\star$ in between $\beta(\rp)$ and $\beta(\rq)$. To bound the second term, note that
    \eqan{
    \Big|F(\rp, \beta(\rq))-F(\rq, \beta(\rq))\Big|
    &=\Big|\sum_{k=2}^\infty \frac{\beta-\beta^k}{1-\ext^{k+1}}(p_k^\star-q_k^\star)-(p_0^\star-q_0^\star)\Big|\nonumber\\
    &\leq \sum_{k=0}^\infty |p_k^\star-q_k^\star|.
    }
Thus, by the lower bound in (\ref{lower-bound-derivative-F}),
    \eqn{
    \label{beta-derivative-p}
    \big|\beta(\rp)-\beta(\rq)\big|
    =\frac{\big|F(\rp, \beta(\rq))-F(\rq, \beta(\rq))\big|}{\big|\frac{d}{d\beta} 
    F(\rp, \beta^\star)\big|}
    \leq \frac{1}{c(\rp,\eta/2)}\sum_{k=0}^\infty |p_k^\star-q_k^\star|.
    }
Finally, it is not hard to see that $d(\rp,\rq)=\sum_{k\ge 1}k|p_k-q_k|\leq \delta$ implies that $\sum_{k=0}^\infty |p_k^\star-q_k^\star|\leq c\delta$ for some $c>0$. 
We conclude that $\rp\mapsto \beta(\rp)$ is uniformly Lipschitz in the metric $d(\rp,\rq)$, on the set $\{\rp\colon \sum_{k\geq 1} kp_k\geq 2/(1-2\eta)\}$.
\smallskip

\paragraph{\bf Resulting continuity of $K(\rp)$.}
Let
    \eqn{
    \label{K-def-p-beta}
    K(\rp, \beta)=\left( \frac{1}{2}\sum_{k=1}^\infty kp_k\right)\log(1-\ext^2) - \sum_{k=1}^\infty p_k \log(1-\ext^k).
    }
Let $\rp,\rq$ be such that $\sum_{k\geq 1} kp_k\geq 2/(1-2\eta)$ and $\sum_{k\geq 1} kq_k\geq 2/(1-2\eta).$
By (\ref{K-def}),
\eqan{
\label{K-diff-rewrite}
K(\rp)-K(\rq) &= [K(\rp,\beta(\rp))-K(\rq, \beta(\rp))] +[K(\rq, \beta(\rp))-K(\rq, \beta(\rq))].
}
We will investigate both terms separately. By the mean value theorem, we can rewrite the second term as
    \eqn{
    K(\rq, \beta(\rp))-K(\rq, \beta(\rq))
    =(\beta(\rp)-\beta(\rq))\frac{d}{d\beta} 
    K(\rq, \beta^\star),
    }
for some $\beta^\star$ in between $\beta(\rp)$ and $\beta(\rq)$. We compute
    \eqn{
    \frac{d}{d\beta} 
    K(\rq, \beta)=
    \sum_{k=1}^\infty q_k \frac{k\beta^{k-1}}{1-\ext^k}-\frac{\beta}{1-\ext^2}\sum_{k=1}^\infty kq_k,
    }
which is uniformly bounded by some $B_1(\eta)$ since $1-\beta\leq \eta/2$. Finally,
    \eqan{
    \label{K-Lipschitz}
    &\big|K(\rp,\beta(\rp))-K(\rq, \beta(\rp))\big|\nonumber\\
    &=
    \Big|\frac{1}{2}\log(1-\beta(\rp)^2)\sum_{k=1}^\infty k(p_k-q_k) - \sum_{k=1}^\infty (p_k-q_k) \log(1-\beta(\rp)^k)\Big|\nonumber\\
    &\leq d(\rp,\rq) B_2(\eta),
    }
where $B_2(\eta)$ is uniformly bounded since $\beta(\rp)< 1-\eta.$ We conclude that 
    \eqn{
    \label{K-Lipschitz-fin}
    \big|K(\rq, \beta(\rp))-K(\rq, \beta(\rq))\big|
    \leq B_1(\eta)\big|\beta(\rp)-\beta(\rq)\big|
    +B_2(\eta)d(\rp,\rq).
    }
Since $\rp\mapsto \beta(\rp)$ is uniformly Lipschitz on the set  $\{\rp\colon \sum_{k\geq 1} kp_k\geq 2/(1-2\eta)\}$, this concludes the proof.
\end{proof}

\paragraph{\bf Acknowledgements}
SB acknowledges the support of the Fonds de recherche du Qu\'ebec. SD acknowledges
the financial support of the CogniGron research center
and the Ubbo Emmius Funds (University of Groningen). Her research was also partially supported by the Marie Sk\l{}odowska-Curie grant GraPhTra (Universality in phase transitions in random graphs), grant agreement ID 101211705. RvdH is partially supported by the Netherlands Organisation for Scientific Research (NWO) through the Gravitation NETWORKS grant 024.002.003.
\bibliographystyle{plainnat}
\bibliography{connected_CM}

\end{document}